\documentclass[a4paper]{article}

\usepackage[english]{babel}
\usepackage[utf8]{inputenc}
\usepackage{amsmath}
\usepackage{graphicx}
\usepackage[colorinlistoftodos]{todonotes}
\usepackage{amsthm}
\newtheorem{thm}{Theorem}

\newtheorem{prop}[thm]{Proposition}

\newtheorem{lemma}[thm]{Lemma}
\newtheorem{defn}[thm]{Definition}
\usepackage{amssymb}
\usepackage{ esint }
\usepackage{ mathrsfs }
\usepackage{breqn}
\usepackage[hmargin=1in,vmargin=1in]{geometry}
\usepackage{amsmath}
\usepackage{amsthm}
\usepackage{graphicx}
\usepackage{placeins}

\usepackage{amsthm,amsfonts,amssymb,euscript,verbatim,bbm}

\usepackage{graphics}
\usepackage{enumerate}

\usepackage{color}

\newcommand{\eqdef}{\overset{\mbox{\tiny{def}}}{=}}

\newcommand{\pel}{p}

\def\vh {\hat{\pel}}

\def\rd {\partial}

\def\ep {\epsilon}

\def\nab {\nabla}

\newcommand{\ba}{\begin{equation}}
\newcommand{\ea}{\end{equation}}

\newcommand{\bea}{\begin{eqnarray}}
\newcommand{\eea}{\end{eqnarray}}

\def\beaa{\begin{eqnarray*}}
\def\eeaa{\end{eqnarray*}}

\title{Three New Results on Continuation Criteria for the 3D Relativistic Vlasov-Maxwell System}

\author{Neel Patel \footnote{Department of Mathematics, University of Pennsylvania, 209 South 33rd Street, Philadelphia, PA 19104. (neelpa@sas.upenn.edu) The author would like to  gratefully acknowledge partial support from the NSF grants DMS-1500916 and DMS-1200747.}}

\begin{document}
\maketitle

\begin{abstract}
In this paper, we consider sufficient conditions, called continuation criteria, for global existence and uniqueness of classical solutions to the three-dimensional relativistic Vlasov-Maxwell system. In the compact momentum support setting, we prove that $\|p_{0}^{\frac{18}{5r} - 1+\beta}f\|_{L^{\infty}_{t}L^{r}_{x}L^{1}_{p}} \lesssim 1$, where $1\leq r \leq 2$ and $\beta >0$ is arbitrarily small, is a continuation criteria. The previously best known continuation criteria in the compact setting is $\|p_{0}^{\frac{4}{r} - 1+\beta}f\|_{L^{\infty}_{t}L^{r}_{x}L^{1}_{p}} \lesssim 1$, where $1\leq r < \infty$ and $\beta >0$ is arbitrarily small, is due to Kunze in \cite{Kunze}. Thus, our continuation criteria is an improvement in the $1\leq r \leq 2$ range. In addition, we consider also sufficient conditions for a global existence result to the three-dimensional relativistic Vlasov-Maxwell system without compact support in momentum space, building on previous work by Luk-Strain \cite{Luk-Strain}. In \cite{Luk-Strain}, it was shown that $\|p_{0}^{\theta}f\|_{L^{1}_{x}L^{1}_{p}} \lesssim 1$ is a continuation criteria for the relativistic Vlasov-Maxwell system without compact support in momentum space for $\theta > 5$. We improve this result to $\theta > 3$. We also build on another result by Luk-Strain in \cite{L-S}, in which the authors proved the existence of a global classical solution in the compact momentum support setting given the condition that there exists a two-dimensional plane on which the momentum support of the particle density remains fixed. We prove well-posedness even if the plane varies continuously in time.
\end{abstract}

\section{Introduction}

Consider a distribution of charged particles described by a non-negative density function $f: \mathbb{R}_{t} \times \mathbb{R}^{3}_{x} \times \mathbb{R}^{3}_{p} \rightarrow \mathbb{R}_{+}$ of time $t$, space $x$ and momentum $p$. The Vlasov-Maxwell system describes the evolution of the density function $f(t,x,p)$ under the influence of time-dependent vector fields $E, B :  \mathbb{R}_{t} \times \mathbb{R}^{3}_{x} \rightarrow  \mathbb{R}^{3}$. Physically, this system models the behavior of a collisionless plasma:
\bea
& &\rd_t f+\vh\cdot\nabla_x f+ (E+\vh\times B)\cdot \nabla_\pel f = 0,\label{vlasov}\\
& &\rd_t E= \nabla_x \times B- j,\quad \rd_t B=-\nabla_x\times E,\label{maxwell}\\
& &\nabla_x\cdot E=\rho,\quad \nabla_x \cdot B=0.\label{constraints}
\eea
Here the charge is
$$\rho(t,x) \eqdef 4\pi\int_{\mathbb R^{{3}}} f(t,x,\pel) d\pel,$$
and the current is given by
$$
j_i(t,x) \eqdef  4\pi \int_{\mathbb R^{{3}}} \vh_i f(t,x,\pel) d\pel, \quad i=1,..., 3
$$
\\
with initial data $(f,E,B)|_{t=0} = (f_{0}, E_{0}, B_{0})$ satisfying the time-independent equations $(3)$. In the above equations, $\hat{p} = \frac{p}{p_{0}}$ where $p_{0} = (1 + |p|^{2})^{\frac{1}{2}}$.

\subsection{Notation}

In this section, we describe the notation that will be followed in the remainder of this paper. For a scalar function, $f = f(t,x,p)$, and real numbers $1\leq s,r,q \leq \infty$ we define the following norm:
$$ \|f\|_{L^{s}([0,T); L^{r}_{x}L^{q}_{p})} \eqdef \bigg( \int_{0}^{T} \Big( \int_{\mathbb{R}^{3}} \Big( \int_{\mathbb{R}^{3}} |f|^{q} dp \Big)^{\frac{r}{q}} dx \Big)^{\frac{s}{r}}  \bigg)^{\frac{1}{s}}.$$

Next,  we define $K = E + \hat{p}\times B$ and note that $|K| \leq |E| + |B|$ since $|\hat{p}| = 1$. Using the notation from \cite{Kunze}, we define:

$$ \sigma_{-1}(t,x) \eqdef \sup\limits_{|\omega| = 1}\int_{\mathbb{R}^{3}}{\frac{f(t,x,p)}{p_{0}(1+\hat{p}\cdot\omega)} dp}. $$
Also, for use in the case where we have control of the momentum support of $f$, we define
\bea\label{P(T)}
P(t) \eqdef 2 + \sup\{p\in\mathbb{R}^{3} \big| \ \exists x\in \mathbb{R}^{3}, s\in [0,t]  \text{ such that } f(s,x,p) \neq 0\}.
\eea
The notation $ a \lesssim b$ means that there exists some positive inessential constant, $C$, such that $a \leq C b$ and $a \approx b$ means that $\frac{1}{C}b\leq  a \leq Cb$.

Next, define the integral over the space-time cone $C_{t,x}$ as follows:
\bea\label{coneintegral}
\int_{C_{t,x}} f d\sigma \eqdef \int_{0}^{t} \int_{0}^{2\pi}\int_{0}^{\pi} (t-s)^{2}\sin(\theta) f(s, x+(t-s)\omega) d\theta d\phi ds
\eea
in which $\omega = (\sin(\theta)\cos(\phi),\sin(\theta)\sin(\phi),\cos(\phi))$.

Finally, for a plane $Q \subset \mathbb{R}^{3}$ containing the origin we define the projection $ \mathbb{P}_{Q}$ to be the orthogonal projection onto the plane $Q$.

\subsection{Preliminaries}

By the method of characteristics, we obtain that the particle density is conserved over the characteristics described by the system of ordinary differential equations:
\bea\label{char1}
\frac{d {X}}{ds}(s;t,x,\pel)=\hat{V}(s;t,x,\pel),
\eea
\bea\label{char2}
\frac{dV}{ds}(s;t,x,\pel)= E(s,X(s;t,x,\pel))+\hat{V}(s;t,x,\pel)\times B(s,X(s;t,x,\pel)),
\eea
together with the conditions
\bea
X(t;t,x,\pel)=x,\quad V(t;t,x,\pel)=\pel,\label{char.data}
\eea
where $\hat{V}\eqdef \frac{V}{\sqrt{1+|V|^2}}$. Further, we also have the conservation laws:

\begin{prop}
Suppose $(f,E,B)$ is a solution to the relativistic Vlasov-Maxwell system. Then we have the following conservation laws:
\bea\label{conserve1}
\frac{1}{2}\int_{\{t\}\times\mathbb{R}^{3}} (|E|^{2}+|B|^{2}) dx + 4\pi\int_{\{t\}\times\mathbb{R}^{3}\times\mathbb{R}^{3}} p_{0}f(t,x,p) dx dp = constant
\eea
and
\bea\label{conserve2}
\|f\|_{L^{\infty}_{x,p}}(t) \leq \|f_{0}\|_{L^{\infty}_{x,p}}.
\eea

\end{prop}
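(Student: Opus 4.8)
\emph{Proof strategy.} I would establish the two conservation laws independently. For \eqref{conserve2}, I invoke the method of characteristics recorded above: along any characteristic curve the Vlasov equation \eqref{vlasov} is exactly the statement $\frac{d}{ds}f\big(s,X(s;t,x,p),V(s;t,x,p)\big)=0$, so that $f(t,x,p)=f_0\big(X(0;t,x,p),V(0;t,x,p)\big)$. By existence and uniqueness for the characteristic system \eqref{char1}--\eqref{char.data}, the map $(x,p)\mapsto\big(X(0;t,x,p),V(0;t,x,p)\big)$ is a bijection of $\mathbb{R}^3_x\times\mathbb{R}^3_p$; taking the supremum over $(x,p)$ therefore gives $\|f(t)\|_{L^\infty_{x,p}}=\|f_0\|_{L^\infty_{x,p}}$, which is stronger than \eqref{conserve2}.

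For \eqref{conserve1}, I derive a differential identity for each of the two energies and add them. Pairing \eqref{vlasov} with $p_0$ and integrating over $\mathbb{R}^3_x\times\mathbb{R}^3_p$: the term $\int p_0\,\partial_t f$ equals $\frac{d}{dt}\int p_0 f$ since $p_0$ is independent of $t$; the transport term $\int p_0\,\hat p\cdot\nabla_x f$ vanishes after integrating by parts in $x$; and for the force term I integrate by parts in $p$ and use the elementary identities $\nabla_p p_0=\hat p$, $\nabla_p\cdot(\hat p\times B)=0$ (because $B$ is $p$-independent and $\nabla_p\times\hat p=0$ by antisymmetry of the Levi-Civita symbol against the symmetric tensors $\delta_{jk}$ and $p_jp_k$), and $\hat p\cdot(\hat p\times B)=0$, so that $\nabla_p\cdot\big(p_0(E+\hat p\times B)\big)=\hat p\cdot E$. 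This yields $\frac{d}{dt}\int p_0 f\,dx\,dp=\int(\hat p\cdot E)\,f\,dx\,dp=\frac{1}{4\pi}\int E\cdot j\,dx$ by the definition of $j$. For the electromagnetic energy I run the standard Poynting computation with \eqref{maxwell}: $\frac{d}{dt}\tfrac12\int(|E|^2+|B|^2)\,dx=\int\big(E\cdot\partial_t E+B\cdot\partial_t B\big)\,dx=\int\big(E\cdot(\nabla\times B)-B\cdot(\nabla\times E)\big)\,dx-\int E\cdot j\,dx=\int\nabla\cdot(B\times E)\,dx-\int E\cdot j\,dx=-\int E\cdot j\,dx$, the curl terms combining into a pure divergence that integrates to zero. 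Adding $4\pi$ times the Vlasov identity to the Maxwell identity, the two copies of $\int E\cdot j$ cancel, so $\frac{d}{dt}\big[\tfrac12\int(|E|^2+|B|^2)\,dx+4\pi\int p_0 f\,dx\,dp\big]=0$, which is \eqref{conserve1}.

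The only genuine point requiring care is the rigor of the integrations by parts: all boundary contributions at spatial infinity, and at infinity in $p$, must vanish. This is legitimate for the classical solutions under consideration, since $f$ carries the decay (or compact momentum support) and $E,B$ the spatial decay built into the solution class; if one wants to be fully careful I would insert a smooth spatial and momentum cutoff, carry out the argument, and pass to the limit. The one genuinely algebraic step, $\nabla_p\cdot(p_0\,\hat p\times B)=0$, is immediate once $\hat p=p/p_0$ is written out, so I do not expect any real obstacle.
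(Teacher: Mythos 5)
Your proof is correct: the $L^\infty$ bound follows from constancy of $f$ along the characteristic flow, and the energy identity from pairing the Vlasov equation with $p_0$ and the Poynting computation, with the $\int E\cdot j$ terms cancelling exactly as you show (and the factor $4\pi$ matching the definition of $j$). The paper states this proposition without proof as a standard fact, and your derivation is precisely the standard argument it implicitly relies on.
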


Note that by interpolation, the conservation laws above imply that $\|f\|_{L^{q}_{x,p}}(t) \lesssim \|f_{0}\|_{L^{q}_{x,p}}$ for $1\leq q \leq \infty$. Thus, given sufficiently nice initial data, we can assume $L^{2}$ bounds on $K$ and $L^{q}$ bounds on $f$ for $1\leq q \leq \infty$. The Glassey-Strauss decomposition is $E = E_{0} + E_{T} + E_{S}$, where $E_{0}$ depends only on the initial data, and $E_{T}$ and $E_{S}$ are:
\bea\label{ET}
E_{T} = -\int_{|y-x|\leq t}\int_{\mathbb{R}^{3}} \frac{(\omega+\hat{p})(1-|\hat{p}|^{2})}{(1+\hat{p}\cdot \omega)^{2}} f(t-|y-x|,y,p)dp\frac{dy}{|y-x|^{2}}
\eea
\bea\label{ES}
E_{S} = -\int_{|y-x|\leq t}\int_{\mathbb{R}^{3}} \nabla_{p}\Big(\frac{(\omega+\hat{p})}{1+\hat{p}\cdot\omega}\Big) \cdot Kf dp \frac{dy}{|y-x|}
\eea
The Glassey-Strauss decomposition for the magnetic field $B = B_{0} + B_{T} + B_{S}$ is similar. Writing $K_{0} = E_{0} + \hat{p}\times B_{0}$, $K_{T} = E_{T} + \hat{p}\times B_{T}$ and $K_{S} = E_{S} + \hat{p}\times B_{S}$, we can write that $|K| \leq |E| + |B|$, $|K_{0}| \leq |E_{0}| + |B_{0}|$, $|K_{T}|\leq |E_{T}| + |B_{T}|$, and $|K_{S}|\leq |E_{S}| + |B_{S}|$, where $K_{0}$ depends only on the initial data $(f_{0}, E_{0}, B_{0})$ of the relativistic Vlasov-Maxwell system. Bounding the other terms as in Propositions 3.1 and 3.2 in \cite{L-S} we obtain:
$$|K_{T}| \lesssim \int_{|y|\leq t}\frac{dy}{|y|^{2}}\int_{\mathbb{R}^{3}}\frac{f(t-|y|,x+y,p)}{p_{0}(1+\hat{p}\cdot\omega)} dp$$
and
$$|K_{S}| \lesssim \int_{|y|\leq t}\frac{dy}{|y|}\int_{\mathbb{R}^{3}}\frac{(|E|+|B|)f(t-|y|,x+y,p)}{p_{0}(1+\hat{p}\cdot\omega)} dp$$
Recalling the definition of $\sigma_{-1}$, we can bound these expressions by:
\bea\label{KT}
|K_{T}| \lesssim \int_{|y|\leq t}\frac{\sigma_{-1}(t-|y|,x+y) dy}{|y|^{2}}
\eea
\bea\label{KS}
|K_{S}| \lesssim \int_{|y|\leq t}\frac{((|E|+|B|)\sigma_{-1})(t-|y|,x+y) dy}{|y|}
\eea
Note that the right hand side of (\ref{KS}) is in the form of $\square^{-1}(|K|\sigma_{-1})$, where $\square \eqdef \partial_{t}^{2}-\sum\limits_{i=1}^{3}{\partial_{x_{i}}^{2}}$ and $u = \square^{-1}F$ satisfies:
\bea\label{wave}
\square u = F; \  u|_{t=0} = \rd_{t}u|_{t=0}=0
\eea

\subsection{Previous Results}

Luk-Strain \cite{L-S} stated the following version of the Glassey-Strauss result in \cite{Glassey-Strauss} in the case where $f_{0}$ is compactly supported in momentum space:

\begin{thm}\label{GSresult}

Consider initial data $(f_{0}, E_{0}, B_{0})$ where $f_{0} \in H^{5}(\mathbb{R}^{3}_{x}\times\mathbb{R}^{3}_{p})$ is non-negative and has compact support in $(x,p)$, and $E_{0}, B_{0} \in H^{5}(\mathbb{R}^{3}_{x})$ such that $(3)$ holds. Suppose $(f,E,B)$ is the unique classical solution to the relativistic Vlasov-Maxwell system $(1)-(3)$ in the time interval $[0,T)$ and there exists a bounded continuous function $P: [0,T)\rightarrow \mathbb{R}_{+}$ such that 

$$f(t,x,p) = 0 \text{ for } |p|\geq P(t) \ \forall x\in \mathbb{R}, t\in [0,T).$$
Then our solution $(f,E,B)$ extends uniquely in $C^{1}$ to a larger time interval $[0, T+\epsilon]$ for some $\epsilon > 0$.
\end{thm}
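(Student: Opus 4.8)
\noindent\emph{Proof strategy.} The plan is to run the Glassey--Strauss continuation scheme. Since $(f,E,B)$ is already known to be a $C^1$ solution on $[0,T)$, it suffices to exhibit bounds on $[0,T)$ that do not degenerate as $t\uparrow T$: I will first bound $\|E\|_{L^\infty_x}+\|B\|_{L^\infty_x}$ uniformly on $[0,T)$, then bootstrap to a uniform bound on $\|f\|_{H^5}+\|E\|_{H^5}+\|B\|_{H^5}$, and finally invoke the local existence and uniqueness theorem for $H^5$ data at time $T$ to produce the $C^1$ extension to $[0,T+\epsilon]$.

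\noindent\emph{Step 1: a bound on $\sigma_{-1}$.} Put $P_\ast \eqdef \sup_{[0,T)}P(t)$, which is finite by hypothesis, so that $f(t,x,p)=0$ whenever $|p|\ge P_\ast$, and recall also that $f$ has compact $x$-support on $[0,T)$ by finite speed of propagation. Using $(\ref{conserve2})$ to replace $f$ by $\|f_0\|_{L^\infty}$ and writing $p_0(1+\hat p\cdot\omega)=p_0+p\cdot\omega$, one sees that the only potential singularity of the integrand defining $\sigma_{-1}$ is at $\hat p=-\omega$, where $p_0+p\cdot\omega\gtrsim p_0^{-1}$; a direct computation of the resulting integral over $|p|\le P_\ast$ (polar coordinates with axis $\omega$) yields $\sigma_{-1}(t,x)\lesssim P_\ast^{2}\log P_\ast$, and any polynomial bound in $P_\ast$ would suffice. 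In particular $\sigma_{-1}$ is bounded uniformly on $[0,T)$.

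\noindent\emph{Step 2: $L^\infty$ bounds on the fields.} This is the heart of the matter. The free term $K_0$ (hence $E_0+\hat p\times B_0$ and the analogous piece of $B$) is bounded on $[0,T)$ by the regularity of the initial data. In $(\ref{KT})$ the weight $|y|^{-2}$ is exactly cancelled by the Jacobian of polar coordinates, so $|K_T(t,x)|\lesssim t\,\sup_{s<t}\|\sigma_{-1}(s)\|_{L^\infty_x}\lesssim 1$. For the singular term, $(\ref{KS})$ exhibits $K_S$ as $\square^{-1}$ applied to $(|E|+|B|)\sigma_{-1}$ in the sense of $(\ref{wave})$; integrating in polar coordinates the weight $|y|^{-1}$ leaves an $r\,dr$, so with $M(t)\eqdef\sup_x(|E|+|B|)(t,x)$ and the Step 1 bound we obtain $M(t)\lesssim 1+\int_0^t(t-s)M(s)\,ds$, and a Gronwall-type argument gives $M(t)\lesssim e^{CT^2}<\infty$ uniformly on $[0,T)$. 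This is precisely where the bounded--momentum-support hypothesis enters: it is what makes $\sigma_{-1}$ finite.

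\noindent\emph{Step 3: higher regularity and conclusion.} With $\|E\|_{L^\infty_x}+\|B\|_{L^\infty_x}\lesssim 1$ on $[0,T)$, the characteristic system $(\ref{char1})$--$(\ref{char2})$ has bounded right-hand side, so the flow $(x,p)\mapsto (X,V)(s;t,x,p)$ has Jacobian bounded above and below; together with $(\ref{conserve1})$ this propagates control of $\|f(t)\|_{L^q_{x,p}}$ and of the field energy. Differentiating the Vlasov equation and the Glassey--Strauss representations of $E$ and $B$, and splitting each spatial derivative of $E_T,E_S$ (and of $B_T,B_S$) into a tangential part along the backward light cone, handled by integration by parts, and a transversal part, one closes a Gronwall inequality for $\|f(t)\|_{H^5}+\|E(t)\|_{H^5}+\|B(t)\|_{H^5}$ whose coefficients depend only on the $L^\infty$ bounds established in Step 2. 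Hence this quantity stays bounded as $t\uparrow T$, the local existence and uniqueness theorem applies at time $T$, and $(f,E,B)$ extends uniquely in $C^1$ to $[0,T+\epsilon]$. I expect the main obstacle to lie in Step 3 — specifically the good/bad derivative decomposition of $\nabla E,\nabla B$ and the bookkeeping needed to close the $H^5$ estimate — whereas Steps 1 and 2, although they are the conceptual core, are comparatively short.
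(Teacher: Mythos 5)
First, a point of comparison: the paper does not prove Theorem \ref{GSresult} at all --- it is quoted from Luk--Strain \cite{L-S} as a restatement of the classical Glassey--Strauss theorem \cite{Glassey-Strauss}, so there is no in-paper argument to measure yours against. Judged on its own terms, your Steps 1 and 2 are correct and reproduce the classical zeroth-order argument: granting the representation bounds (\ref{KT})--(\ref{KS}) (which the paper imports from Propositions 3.1--3.2 of \cite{L-S}), compact momentum support gives $\sigma_{-1}\lesssim P_*^2\log P_*$ pointwise, the kernels $|y|^{-2}$ and $|y|^{-1}$ are integrable on the backward light cone, and the linear Gronwall inequality $M(t)\lesssim 1+\int_0^t(t-s)M(s)\,ds$ closes.

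The genuine gap is Step 3, which is not ``bookkeeping'' but the actual content of the Glassey--Strauss theorem, and your sketch omits the one mechanism that makes it work. Differentiating the representations of $E_T,E_S$ in $x$ produces kernels of order $|y|^{-3}$; even after the tangential/transversal splitting and the integration by parts along the cone, the leftover transversal (``bad'') contribution is logarithmically divergent and must be cut off at a small radius $\epsilon$, with the region $|y|\le\epsilon$ estimated by $\epsilon\,\|\nabla_{x,p}f\|_{L^\infty}$ and $\epsilon$ then optimized in terms of $\|\nabla_{x,p}f\|_{L^\infty}$ itself. The resulting field estimate has the schematic form $\|\nabla_x K(t)\|_{L^\infty}\lesssim 1+\log\bigl(e+\sup_{s\le t}\|\nabla_{x,p}f(s)\|_{L^\infty}\bigr)+\cdots$, so the Gronwall inequality one must close for the gradients is of the nonlinear type $h'\lesssim h\log(e+h)$, which closes only because $\int^\infty \frac{dh}{h\log h}$ diverges. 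Your assertion that the Gronwall coefficients ``depend only on the $L^\infty$ bounds established in Step 2'' is therefore not accurate --- they depend, logarithmically, on the very quantity being estimated --- and as written the argument does not establish that the $C^1$ norms stay finite as $t\uparrow T$. A secondary organizational point: one should close the $W^{1,\infty}$ estimate for $(f,E,B)$ first and only afterwards propagate $H^5$ by standard energy estimates with bounded coefficients; attempting the light-cone integration by parts directly at the level of $H^5$ norms is not how the argument runs. To complete the proof you would need to carry out this derivative estimate in full, or simply cite \cite{Glassey-Strauss} or \cite{Glassey} for it --- which is effectively what the paper does.
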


Additional assumptions on the Vlasov-Maxwell system, such as the condition that $f(t,x,p) = 0 \text{ for } |p|\geq P(t) \ \forall x\in \mathbb{R}, t\in [0,T)$ in the above theorem, are known as continuation criteria as they allow us to extend the interval of existence of a solution. Luk-Strain \cite{Luk-Strain} removed the condition of compact support in momentum space and proved continuation criteria in the space $H^{D}(w_{3}(p)^{2}\mathbb{R}^{3}_{x}\times\mathbb{R}^{3}_{p})$, which is the weighted Sobolev space defined by the norm:
$$ \|f\|_{H^{D}(w_{3}(p)^{2}\mathbb{R}^{3}_{x}\times\mathbb{R}^{3}_{p}}) = \sum_{0\leq k\leq {D}}\| \left( \nab^k_{x,\pel} f \right) w_{3}\|_{L^2_x L^2_\pel}$$ where the weight is defined as $w_{3}(p) = p_{0}^{\frac{3}{2}}\log(1+p_{0})$. Luk-Strain \cite{Luk-Strain} proved the following in this weighted Sobolev space:

\begin{thm}\label{LSorigin}
Let $(f_0(x,\pel),E_0(x),B_0(x))$ be {a $3$D} initial data set which satisfies the constraints \eqref{constraints} and such that {for some $D\geq 4$}, $f_0\in H^{{D}}(w_3(\pel)^2\mathbb R^3_x\times\mathbb R^3_\pel)$ is non-negative and obeys the bounds
\bea\label{ini.bd.2.5}
\sum_{0\leq k\leq {D}}\| \left( \nab^k_{x,\pel} f_0 \right) w_{3}\|_{L^2_x L^2_\pel}<\infty,
\eea
\bea\label{ini.bd.3}
\|\int_{\mathbb R^3} \sup\{f_0(x+y,\pel+w) \pel_0^3:\, |y|+|w|\leq R\}\, d\pel\|_{L^\infty_x} \leq C_R,
\eea
\bea\label{ini.bd.4}
\|\int_{\mathbb R^3} \sup\{|\nabla_{x,\pel} f_0|(x+y,\pel+w) \pel_0^3:\, |y|+|w|\leq R\}\, d\pel\|_{L^\infty_x} \leq C_R,
\eea
\begin{equation}\label{ini.bd.5}
\|\int_{\mathbb R^3}\sup\{|\nabla_{x,\pel} f_0|^2(x+y,\pel+w) w_{3}^2 :\,|y|+|w|\leq R\}\, d\pel\|_{L^\infty_x}
  \leq C_R^2,
\end{equation}
and
\bea\label{ini.bd.5.5}
\|\int_{\mathbb R^3}\sup\{|\nabla_{x,\pel}^2 f_0|(x+y,\pel+w) \pel_0:\,|y|+|w|\leq R\} d\pel\|_{L^\infty_x}  \leq C_R,
\eea
for some different constants $C_R <\infty$ for every $R>0$; and the initial electromagnetic fields $E_0, B_0 \in H^{{D}}(\mathbb R^3_x)$ obey the bounds
\bea\label{ini.bd.6}
\sum_{0\leq k\leq {{D}}}(\|\nab_x^k E_0 \|_{L^2_x}+\|\nab_x^k B_0 \|_{L^2_x})<\infty.
\eea 

Given this initial data set, there exists a unique local solution $(f,E,B)$ on some $[0,T_{loc}]$ such that $f_{0} \in L^{\infty}([0,T_{loc}]; H^{4}(w_{3}(p)^{2}\mathbb{R}^{3}_{x}\times\mathbb{R}^{3}_{p}) $ and $E, B \in L^{\infty}([0,T_{loc}]; H^{4}(\mathbb{R}^{3}_{x}))$.

Let $(f,E,B)$ be the unique solution to \eqref{vlasov}-\eqref{constraints} in $[0,T_*)$. Assume that 
\bea\label{ini.bd.10}
sup \int_{0}^{T_{*}} \big( |E(s,X(s;t,x,p)| + |B(s,X(s;t,x,p)|  \big) ds < \infty
\eea
where the supremum is taken over all $(t,x,p) \in [0,T_{*})\times\mathbb{R}^{3}\times\mathbb{R}^{3}$. Then, there exists $\epsilon>0$ such that the solution extends uniquely beyond $T_*$ to an interval $[0,T_*+\epsilon]$ such that $E,B\in L^\infty([0,T_*+\ep];H^{{D}}(\mathbb R^3_x))$ and  $f\in L^\infty([0,T_*+\ep];H^{{D}}(w_3(\pel)^2 d\pel ~dx ))$.
\end{thm}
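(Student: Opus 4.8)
I would run a Glassey--Strauss type continuation argument in four steps: (i) use \eqref{ini.bd.10} to bound the growth of momentum along characteristics; (ii) propagate the weighted bounds on $f$ from the data to all of $[0,T_*)$; (iii) control $\sigma_{-1}$ and, via the representation formulas \eqref{ET}--\eqref{KS}, close a Gronwall estimate for $E$, $B$ and their derivatives up to order $D$; and (iv) invoke the local existence statement in the first part of the theorem to extend the solution past $T_*$.

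For step (i), along \eqref{char2} one has $\bigl|\tfrac{dV}{ds}(s;t,x,p)\bigr| = |E+\hat V\times B|(s,X(s;t,x,p)) \le (|E|+|B|)(s,X(s;t,x,p))$, so integrating and invoking \eqref{ini.bd.10} gives $|V(s;t,x,p)-p|\le M$ for all $s\in[0,T_*)$, where $M<\infty$ is the supremum in \eqref{ini.bd.10}. Hence $p_0$ and $(1+|V(s;t,x,p)|^2)^{1/2}$ are comparable up to the additive constant $M$ along every characteristic, and the characteristic flow displaces each $(x,p)$ by a uniformly bounded amount on $[0,T_*)$. Since $f$ is constant along characteristics, the sup-over-nearby-points form of the hypotheses \eqref{ini.bd.3}--\eqref{ini.bd.5.5} then allows one to transport those weighted $L^\infty_x L^1_p$ bounds forward: for every $t\in[0,T_*)$,
\[
\left\| \int_{\mathbb R^3} p_0^3\, f(t,x,p)\,dp\right\|_{L^\infty_x}\lesssim 1,
\]
with the analogous bounds for $\nabla_{x,p} f$ and $\nabla_{x,p}^2 f$ carrying the weights prescribed in \eqref{ini.bd.4}--\eqref{ini.bd.5.5}. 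Crucially this uses only \eqref{ini.bd.10} and the data assumptions --- no a priori control of the fields --- so there is no circularity.

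For step (iii), the weighted $L^1_p$-bound on $f$ together with its $L^\infty$-bound from \eqref{conserve2} controls, by the classical Glassey--Strauss estimate for the singular momentum integral $\int \frac{g(p)}{p_0(1+\hat p\cdot\omega)}\,dp$, a uniform bound $\|\sigma_{-1}\|_{L^\infty([0,T_*);L^\infty_x)}\lesssim 1$. Then \eqref{KT} gives $\|K_T\|_{L^\infty_x}(t)\lesssim \|\sigma_{-1}\|_{L^\infty_{t,x}}\int_{|y|\le T_*}|y|^{-2}\,dy\lesssim 1$, and writing the cone integral in \eqref{KS} in time slices and putting $h(t)\eqdef \sup_{s\le t}\bigl(\|E\|_{L^\infty_x}+\|B\|_{L^\infty_x}\bigr)(s)$ yields $h(t)\lesssim 1 + \|\sigma_{-1}\|_{L^\infty_{t,x}}\int_0^t (t-s)\,h(s)\,ds$, which Gronwall's inequality closes to $h(t)\lesssim 1$ on $[0,T_*)$. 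For the higher-order bounds one differentiates \eqref{vlasov}--\eqref{constraints}: $\nabla^k_x E$ and $\nabla^k_x B$ ($k\le D$) admit Glassey--Strauss representations whose top-order piece is again of $\sigma_{-1}$-type (after an angular integration by parts on the light cone, as in Propositions 3.1--3.2 of \cite{L-S}), so a coupled Gronwall estimate propagates the weighted $H^{D}$-bounds on $f$ and the $H^{D}$-bounds on $(E,B)$ over $[0,T_*)$.

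The step I expect to be the main obstacle is this last one: the system exhibits an apparent loss of derivatives, since $\nabla_x f$ solves a transport equation with source $\nabla_x(E+\hat p\times B)\cdot\nabla_p f$, involving one more spatial derivative of the fields than one controls. Resolving this is the heart of the Glassey--Strauss method --- the dangerous term, once the field representation is substituted back, turns out to be genuinely of lower order modulo a $\sigma_{-1}$-type remainder, so the a priori estimates close after all. With uniform bounds on $(f,E,B)$ in $L^\infty([0,T_*);H^{D}(w_3(\pel)^2\,d\pel\,dx))\times L^\infty([0,T_*);H^{D}(\mathbb R^3_x))^2$ in hand, applying the local existence statement at a time sufficiently close to $T_*$ yields the extension to $[0,T_*+\epsilon]$ with the stated regularity.
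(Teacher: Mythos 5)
First, a point of comparison: the paper does not prove this theorem. It is quoted verbatim in Section~1.3 (``Previous Results'') from Luk--Strain \cite{Luk-Strain}, and the present paper only uses it as a black box (the continuation criterion \eqref{ini.bd.10} is what Theorem \ref{MAIN} ultimately verifies). So there is no in-paper proof to measure your sketch against; the relevant yardstick is the argument of \cite{Luk-Strain} itself. Your four-step outline does capture the correct global strategy of that argument, and step (i) is fine as stated: \eqref{ini.bd.10} bounds the momentum displacement along characteristics, $|\hat V|<1$ bounds the spatial displacement, and since $f$ itself is constant along characteristics the sup-over-nearby-points hypothesis \eqref{ini.bd.3} transports the third-moment bound forward exactly as you say.

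The genuine gap is in step (ii), and it propagates into step (iii). You assert that the bounds \eqref{ini.bd.4}--\eqref{ini.bd.5.5} on $\nabla_{x,p}f_0$ and $\nabla^2_{x,p}f_0$ transport forward ``using only \eqref{ini.bd.10} and the data assumptions --- no a priori control of the fields --- so there is no circularity.'' That is false: unlike $f$, the derivatives $\nabla_{x,p}f$ are \emph{not} conserved along characteristics. They satisfy a transport equation whose source contains $\nabla_x(E+\hat p\times B)\cdot\nabla_p f$, so propagating \eqref{ini.bd.4}--\eqref{ini.bd.5.5} requires control of $\nabla_x E,\nabla_x B$ along characteristics --- precisely the quantities you are trying to estimate in step (iii) from the derivative bounds on $f$. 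The circularity is real and is the crux of the theorem; in \cite{Luk-Strain} it is broken by a coupled bootstrap in which the Glassey--Strauss representation of $\nabla_x(E,B)$ is decomposed into good (tangential-to-the-cone) derivatives plus the transport operator $\partial_t+\hat p\cdot\nabla_x$, integrated by parts, and fed back into a joint Gronwall inequality for $\|\nabla_{x,p}f\|$ and $\|\nabla_x(E,B)\|$ simultaneously. You gesture at this in your final paragraph (``the dangerous term turns out to be of lower order''), but you assert it rather than carry it out, and your earlier claim of non-circularity contradicts it. A second, smaller gap: the conclusion requires uniform bounds in the weighted $H^D$ norms with $D\ge 4$, not just the pointwise/first- and second-derivative bounds your sketch produces; the energy estimate upgrading $C^1$-type control to $H^D(w_3^2)$ control (which is where the hypothesis \eqref{ini.bd.5} with the $w_3$ weight enters) is not addressed.
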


Under the additional assumption that $\|p_{0}^{N}f_{0}\|_{L^{\infty}_{t}([0,T_{*});L^{1}_{x}L^{1}_{p})} \lesssim C_{N}$ for a large positive integer $N = N_{\theta}$ depending on $\theta$, Luk-Strain \cite{Luk-Strain} use the above theorem to prove that $\|p_{0}^{\theta}f\|_{L^{\infty}_{t}([0,T_{*});L^{1}_{x}L^{1}_{p})} \lesssim 1$ is a continuation criteria for the relativistic Vlasov-Maxwell system without compact support in momentum space for $\theta > 5$. To do so, Luk-Strain \cite{Luk-Strain} utilized Strichartz estimates on both the $K_{T}$ and $K_{S}$ bounds and interpolation inequalities. We note in this paper that we only need the initial data assumption that $\|p_{0}^{N}f_{0}\|_{L^{\infty}_{t}([0,T_{*});L^{1}_{x}L^{1}_{p})} \lesssim 1$ for some $N > 5$.
Finally, for comparison to the results in this paper, we also present the result in the compact support setting due to Kunze in \cite{Kunze}:

\begin{thm}\label{KunzeThm}
Suppose we have initial data $f_{0}\in C^{1}_{0}(\mathbb{R}^{3}\times\mathbb{R}^{3})$ and $E_{0}, B_{0} \in C^{2}_{0}(\mathbb{R}^{3})$ satisfying the constraints (\ref{constraints}). Let $(f,E,B)$ be the unique solution to (\ref{vlasov})-(\ref{constraints}) in the time interval $[0,T_{*}]$. If
$$ \|p_{0}^{\frac{4}{r}-1+\beta}f\|_{L^{\infty}([0,T_{*}];L^{r}_{x}L^{1}_{p})}$$ for some $1\leq q < \infty$ and $\beta > 0$, then we can continuously extend our solution $(f,E,B)$ uniquely to an interval $[0,T_{*}+\epsilon]$.
\end{thm}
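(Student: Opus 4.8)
The idea is to run the Glassey--Strauss continuation scheme with the momentum-support function $P(t)$ of \eqref{P(T)} as the quantity to be controlled, using the weighted hypothesis only to estimate the source term $\sigma_{-1}$ of the Glassey--Strauss fields. By Theorem~\ref{GSresult} it suffices to prove $\sup_{0\le t<T_*}P(t)<\infty$, since then the constant $\sup P$ is a bounded continuous momentum majorant (and the hypothesis provides the regularity of the local solution required there). From the characteristic equation \eqref{char2}, along any characteristic $\frac{d}{ds}|V(s)|\le (|E|+|B|)(s,X(s))$, so
\[
P(t)\;\lesssim\; 1+P(0)+\sup_{(x,p)}\int_0^t (|E|+|B|)\bigl(s,X(s;t,x,p)\bigr)\,ds,
\]
and the task reduces to bounding the time-integral of the fields along characteristics quantitatively in terms of $P$.

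Next, decompose $K=K_0+K_T+K_S$ following Glassey--Strauss \cite{Glassey-Strauss}: $K_0$ is controlled by the compactly supported $C^2$ data, and $K_T$, $K_S$ are governed by \eqref{KT}, \eqref{KS} through $\sigma_{-1}$ and $(|E|+|B|)\sigma_{-1}$, the latter being of the form $\square^{-1}(\cdot)$ as in \eqref{wave}. The crucial ingredient is a pointwise estimate for $\sigma_{-1}(s,y)=\sup_{|\omega|=1}\int f/(p_0(1+\hat p\cdot\omega))\,dp$: split the $p$-integral at $1+\hat p\cdot\omega=\delta$. On $\{1+\hat p\cdot\omega>\delta\}$ one gets a contribution $\lesssim\delta^{-1}\int p_0^{-1}f\,dp\le\delta^{-1}\int p_0^{\frac4r-1+\beta}f\,dp$, since $\frac4r-1+\beta>-1$. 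On the singular cone one uses the universal bound $1+\hat p\cdot\omega\gtrsim p_0^{-2}$ (hence $1/(p_0(1+\hat p\cdot\omega))\lesssim p_0$) together with the fact that $p_0\gtrsim\delta^{-1/2}$ there, so that the contribution is $\lesssim\int_{\mathrm{cone}}p_0^{1-(\frac4r-1+\beta)}\,p_0^{\frac4r-1+\beta}f\,dp$, which is absorbed either for free (when $\frac4r-1+\beta\ge1$, which holds for $1\le r\le 2$) or, for $r>2$, at the cost of a factor $P(s)^{\,a}$ with $a=a(r,\beta)$ small; optimizing in $\delta$ yields
\[
\sigma_{-1}(s,y)\;\lesssim\; P(s)^{\,a}\int_{\mathbb R^3} p_0^{\frac4r-1+\beta}f(s,y,p)\,dp,
\]
so that $\|\sigma_{-1}(s,\cdot)\|_{L^r_x}\lesssim P(s)^{\,a}$ uniformly on $[0,T_*)$ by hypothesis.

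To close, feed this $L^r_x$ control of $\sigma_{-1}$ (upgraded, where needed, to a spacetime $L^q_tL^r_x$ bound by interpolating the $p$-integral against the conservation laws \eqref{conserve1}--\eqref{conserve2}) into Strichartz estimates for the wave equation \eqref{wave} to bound $K_T$ and $K_S$; for $K_S$ this is a genuine bootstrap, since its source carries the factor $|E|+|B|$, which a priori lies only in $L^\infty_tL^2_x$ by \eqref{conserve1}, so one closes it on short time increments and iterates. Combining the resulting field bounds with the displayed inequality for $P$, and keeping the potential integrals convergent via the finite-speed-of-propagation cutoff $|y|\le t$, one obtains a closed continuity inequality $P(t)\lesssim 1+\int_0^t\Phi(P(s))\,ds$ with $\Phi$ of sub-critical growth --- this is exactly where the strict positivity of $\beta$ and the restriction $r<\infty$ are used --- which rules out blow-up of $P$ on $[0,T_*]$; by the reduction and Theorem~\ref{GSresult} the solution then extends past $T_*$.

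The main obstacle is the $\sigma_{-1}$ estimate together with the faithful propagation of the power of $P$ through the Strichartz / Hardy--Littlewood--Sobolev step. Because the definition of $\sigma_{-1}$ involves $\sup_\omega$ rather than an average over $\omega$, the familiar logarithmic gain from integrating $1/(1+\hat p\cdot\omega)$ over $S^2$ is unavailable and must be replaced by the interplay of the momentum cutoff $P(s)$, the decay of the weight $p_0^{\frac4r-1+\beta}$ on the cone, and the optimization in $\delta$; making the final exponent on $P$ small enough for the $P$-equation to stay sub-critical is precisely what dictates the threshold $\frac4r-1+\beta$ and the range $1\le r<\infty$. The $K_S$ bootstrap, while standard, is a necessary technical point because of the self-referential $|E|+|B|$ in \eqref{KS}.
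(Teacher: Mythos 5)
First, a caveat: the paper does not prove Theorem~\ref{KunzeThm} at all --- it is quoted from \cite{Kunze} purely for comparison with the new results --- so there is no in-paper proof to measure you against. Your architecture does match what the paper reports about Kunze's method (reduce to bounding $P(t)$, Glassey--Strauss decomposition, control of $\sigma_{-1}$ from the weighted moment, wave-equation Strichartz estimates with a short-time bootstrap for $K_S$ as in the method of Sogge, then a Gronwall-type inequality for $P$), so the skeleton is the right one.

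However, there is a genuine gap at the quantitative heart of the argument. The entire content of the theorem is that the exponent $\frac{4}{r}-1+\beta$ is exactly the threshold at which the bookkeeping of powers of $P$ closes, and you never carry out that bookkeeping: you assert a bound $\|\sigma_{-1}(s,\cdot)\|_{L^r_x}\lesssim P(s)^a$ with an unspecified $a=a(r,\beta)$, assert that Strichartz propagates this to the fields, and assert that the resulting $\Phi$ is ``sub-critical'' --- but ``sub-critical growth'' does not by itself preclude finite-time blow-up of $P(t)\lesssim 1+\int_0^t\Phi(P)\,ds$ (one needs $\Phi$ at most linear, or an Osgood condition), and verifying that the accumulated exponent on $P$ actually lands below that threshold is precisely the proof. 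Your own closing paragraph concedes this is ``the main obstacle,'' which is an acknowledgment of the gap rather than a closure of it. Two further points need attention. (i) The Strichartz/H\"older pairing against the energy bound $K\in L^\infty_tL^2_x$ from \eqref{conserve1} requires $\sigma_{-1}$ in $L^2_x$ (this is the role of the exponent: by the paper's Lemma~\ref{sigmainterpolation} with target $2$ and source $r$, one needs $\nu>\frac{4}{r}-1$, which is exactly the hypothesis --- but that Hölder step only works for $r\le 2$); your estimate lands in $L^r_x$, and for $r>2$ converting to $L^2_x$ costs further powers of $P$ (via the compact spatial support and the $L^\infty_x$ bound on $\sigma_{-1}$, which is itself $\sim P^2$), which must be fed back into the exponent count. (ii) For $r>2$ your $\sigma_{-1}$ bound carries a factor $P(s)^a$, so the smallness of $\|\sigma_{-1}\|_{L^2_tL^2_x}$ on the short subintervals used to close the $K_S$ bootstrap is no longer uniform --- the partition depends on the quantity $P$ you are trying to bound --- and this circularity requires an explicit continuity argument that you do not supply. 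Until the exponents are computed and these two issues resolved, the proposal is a plausible plan rather than a proof.
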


The final result of this paper builds on the work of Luk-Strain\cite{L-S} in the compact setting:

\begin{thm}\label{planesuppLS}
Consider initial data $(f_{0}, E_{0}, B_{0})$ where $f_{0} \in H^{5}(\mathbb{R}^{3}_{x}\times\mathbb{R}^{3}_{p})$ is non-negative and has compact support in $(x,p)$, and $E_{0}, B_{0} \in H^{5}(\mathbb{R}^{3}_{x})$ such that $(3)$ holds. Suppose $(f,E,B)$ is the unique classical solution to the relativistic Vlasov-Maxwell system $(1)-(3)$ in the time interval $[0,T)$. Assume that there exists a plane $Q \subset \mathbb{R}^{3}$ with $0\in Q$ and a bounded continuous function $\kappa :[0,T_{+})\rightarrow \mathbb{R}^{3}$ such that 

$$ f(t,x,p) = 0 \text{ \ for \ } |\mathbb{P}_{Q}p| \geq \kappa(t), \ \forall x\in\mathbb{R}^{3}.$$ Then there exists an $\epsilon > 0$ such that the solution extends uniquely in $C^{1}$ to a larger time interval $[0,T+\epsilon]$.
\end{thm}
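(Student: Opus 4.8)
\emph{Proof strategy.} The plan is to reduce to the Glassey--Strauss continuation criterion (Theorem~\ref{GSresult}): it suffices to show that the momentum support of $f$ stays bounded on $[0,T)$, and, integrating \eqref{char2} along characteristics, this follows as soon as
$$\sup_{(t,x,\pel)\in[0,T)\times\Rt\times\Rt}\int_0^{T}\big(|E|+|B|\big)\big(s,X(s;t,x,\pel)\big)\,ds<\infty$$
(equivalently, hypothesis \eqref{ini.bd.10} of Theorem~\ref{LSorigin} holds, the remaining hypotheses there being automatic for compactly supported $H^5$ data). Writing $|E|+|B|\ls|K_0|+|K_T|+|K_S|$ via the Glassey--Strauss decomposition, the $K_0$ term is controlled by the finite-time wave evolution of the initial data, so everything comes down to estimating the $K_T$ and $K_S$ contributions — and, importantly, doing so with constants that do not depend on the (a priori possibly large) size of the momentum support on $[0,T)$, so that the estimate closes a bootstrap for that support.

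Introduce coordinates adapted to $Q$: let $\nu$ be a unit normal to $Q$, decompose $\pel=\mathbb P_Q\pel+(\pel\cdot\nu)\nu$, and put $\kappa_0\eqdef\sup_{[0,T)}\kappa<\infty$, so $|\mathbb P_Q\pel|\le\kappa_0$ on $\operatorname{supp}f$. The geometric point is that a particle with $|\pel\cdot\nu|$ large has $\vh$ within $O_{\kappa_0}(|\pel\cdot\nu|^{-1})$ of $\operatorname{sgn}(\pel\cdot\nu)\,\nu$; hence in the Glassey--Strauss kernels \eqref{ET}--\eqref{ES} the singular factor $(1+\vh\cdot\om)$ is bounded below uniformly \emph{unless} the cone direction $\om$ lies in a cap of solid angle $\ls \pel_0^{-2}$ about $-\operatorname{sgn}(\pel\cdot\nu)\,\nu$. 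Carrying out the $\om$-integration over this cap — which is precisely what the spacetime-cone integrations in \eqref{ET}, \eqref{ES} (the $dy/|y|^2$, $dy/|y|$) effect — gains a power of $\pel_0$, so that the momentum then enters only through a one-dimensional integral in $a\eqdef\pel\cdot\nu$ against a decaying weight, with $|\mathbb P_Q\pel|\le\kappa_0$ absorbed at once.

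Accordingly fix a threshold $\Lambda\ge1$ and, using linearity of \eqref{ET}--\eqref{ES} in $f$, split each field into its $f\,\mathbf 1_{\{|\pel\cdot\nu|\le\Lambda\}}$ and $f\,\mathbf 1_{\{|\pel\cdot\nu|>\Lambda\}}$ parts. On the first part the momentum support lies in $\{|\pel|\ls\kappa_0+\Lambda\}$, so the classical Glassey--Strauss field estimates apply with $P$ replaced by $\kappa_0+\Lambda$, producing a bound of the form $C(\kappa_0,\Lambda,T,\text{data})\big(1+\int_0^t\|K(\tau)\|_{L^\infty_x}\,d\tau\big)$; this feeds a Gr\"onwall inequality as in the usual argument via \eqref{KS}--\eqref{wave}. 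On the second part one uses the cap gain of the previous paragraph together with the compactness of the $x$-support of $f$ on $[0,T)$ and the energy identity \eqref{conserve1} — which gives $\int \pel_0 f\,dx\,d\pel\ls1$, hence controls $\int|a|\,f\,dx\,d\pel$ — to bound the residual one-dimensional momentum integral by a quantity that is small in $\Lambda$ and, again, independent of the momentum-support size. Inserting both estimates into the Gr\"onwall loop for $\|K(t)\|_{L^\infty_x}$ on $[0,T)$, and choosing $\Lambda$ large in terms of $\kappa_0$, the conserved energy, the spatial-support radius and the initial data, closes the bound on $\|K\|_{L^\infty([0,T)\times\Rt)}$, hence on $\int_0^T(|E|+|B|)(s,X(s;t,x,\pel))\,ds$, hence on the momentum support; Theorem~\ref{GSresult} then extends the solution past $T$. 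Since only $\kappa_0=\sup\kappa$ enters, allowing the bound $\kappa$ to depend on time (rather than be constant, as in \cite{L-S}) costs nothing beyond this uniformization.

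The main difficulty is the large-normal-momentum part $f\,\mathbf 1_{\{|\pel\cdot\nu|>\Lambda\}}$. Since the normal momentum is unbounded, there is no pointwise-in-$x$ control of $\int f\,d\pel$ or of $\sigma_{-1}$, so one cannot simply bound the fields pointwise and then integrate along characteristics; the cap gain, the compact spatial support, and the merely $L^1$-type control coming from energy conservation must be used together — in practice by a change of variables in the characteristic integral $\int_0^T(|E|+|B|)(s,X(s))\,ds$ that converts it into a controlled spacetime average of $f$ — and the resulting refined bound must then be carried through the $\square^{-1}$/Gr\"onwall argument for the $K_S$ term without losing the smallness in $\Lambda$.
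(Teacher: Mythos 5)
Your reduction (bound the momentum support via $\int_0^T(|E|+|B|)(s,X(s))\,ds$, then invoke Theorem~\ref{GSresult}) and your geometric starting point (on $\operatorname{supp}f$ the condition $|\mathbb{P}_Q p|\le\kappa$ forces $\hat{p}$ of a large-momentum particle to lie within $O(\kappa/|p|)$ of $\pm\nu$, so the singularity of $(1+\hat{p}\cdot\omega)^{-1}$ is localized near $\omega\approx\mp\nu$) are exactly the right ones, and they are the ones used in \cite{L-S} and in Section~10 of this paper (estimate \eqref{pnorm} and Proposition~\ref{maintool}). But the way you propose to exploit the localization has genuine gaps. First, the interchange you describe --- fix $p$, integrate the kernel over the $\omega$-cap of solid angle $\lesssim p_0^{-2}$ --- is obstructed by the fact that $f$ is evaluated at $x+(t-s)\omega$, so the $\omega$-integration cannot be performed against the kernel alone; the correct order (Proposition~\ref{maintool}) is to bound $\int_{\mathbb{R}^3}\frac{f(s,x+r\omega,p)}{p_0(1+\hat{p}\cdot\omega)}\,dp$ \emph{pointwise in $\omega$} by $\min\{P^2\log P,\;A^4\log P\,(\angle(n_3,\pm\omega))^{-2}\}$ and only then integrate over the cone with a polar-cap decomposition in $\omega$. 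Moreover, even granting the interchange, for the kernel $(1+\hat{p}\cdot\omega)^{-1}$ the $\omega$-integration gains only a logarithm ($\int_{\mathbb{S}^2}\min\{p_0^2,\theta^{-2}\}\,d\omega\lesssim\log p_0$), not a power of $p_0$.

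Second, the threshold splitting at $|p\cdot\nu|=\Lambda$ cannot make the high part ``small in $\Lambda$'' uniformly in the momentum-support size: the quantities the Glassey--Strauss representation actually consumes are pointwise in $x$, and on the slab $\{|\mathbb{P}_Qp|\le\kappa_0,\ \Lambda<|p\cdot\nu|<P\}$ one has $\int f\,dp\gtrsim\kappa_0^2(P-\Lambda)$ and $\sup_\omega\int\frac{f\,dp}{p_0(1+\hat{p}\cdot\omega)}\gtrsim\kappa_0^2P^2$ (take $\omega\approx-\nu$), so no choice of $\Lambda$ independent of $P$ yields smallness; the $L^1_x$ smallness from energy conservation cannot be inserted into the pointwise field representation without a Pallard-type change of variables that you only name. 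Third, and most importantly, the plain $K_S$ kernel \eqref{ES} is not enough to close the Gr\"onwall loop here: the planar condition yields at best $\int_{\mathbb{S}^2}\int\frac{f\,dp}{p_0(1+\hat{p}\cdot\omega)}\,d\omega\lesssim \kappa_0^2+A^4\log^2P$, which puts a $\log^2P$ into the Gr\"onwall exponent and gives $P\lesssim\exp(C\log^2P)$ --- not a closed bootstrap. This is precisely why \cite{L-S} (and Sections~6 and~10 of this paper) replace $K_S$ by the refined decomposition $K_{S,1}+K_{S,2}$, using the half-power singularity $(1+\hat{p}\cdot\omega)^{-1/2}$ in $K_{S,1}$ together with $\|\int f\,dp\|_{L^\infty_x}\lesssim A^2P$ (estimate \eqref{newfbound}), and the $L^2$ flux conservation of $K_g$ on the light cone for $K_{S,2}$; the resulting bounds \eqref{etestimate}, \eqref{ES1estimate}, \eqref{es2inequality} grow only like $P\log P$ and close via the log-Gr\"onwall bootstrap of Proposition~6.1 in \cite{L-S}. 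Without these two ingredients (the pointwise-in-$\omega$ momentum bound and the null-structure decomposition of $K_S$), your argument does not close.
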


\subsection{Main Results}

We extend the result of Luk-Strain \cite{Luk-Strain} to the case where $\theta > 3$ in Theorem \ref{MAIN} below. Note that we also remove the $\theta$ dependence of $N$ in the moment bound, $\|p_{0}^{N}f_{0}\|_{L^{\infty}_{t}([0,T_{*});L^{1}_{x}L^{1}_{p})} \lesssim C_{N}$, of the initial data.

\begin{thm}\label{MAIN}
Consider initial data $(f_{0}, E_{0}, B_{0})$ satisfying (\ref{ini.bd.2.5})-(\ref{ini.bd.6}) and the additional condition that $\|p_{0}^{\tilde{N}}f_{0}\|_{L^{\infty}_{t}([0,T_{*});L^{1}_{x}L^{1}_{p})} \lesssim C_{N}$ for some $\tilde{N} >5$. Let $(f,E,B)$ be the unique solution to $(1)-(3)$ in $[0,T)$  and assume that
$$ \|p_{0}^{\theta}f\|_{L^{1}_{x}L^{1}_{p}}(t) \leq A(t) $$
for some $\theta > 3$ and some bounded continuous function $A: [0,T) \rightarrow \mathbb{R}_{+}$. Then we can extend our solution $(f,E,B)$ uniquely to an interval $[0,T+\epsilon]$ such that $E,B\in L^\infty([0,T+\ep];H^{{D}}(\mathbb R^3_x))$ and  $f\in L^\infty([0,T+\ep];H^{{D}}(w_3(\pel)^2 d\pel ~dx ))$.
\end{thm}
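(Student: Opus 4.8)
The plan is to reduce matters to the conditional extension result of Theorem~\ref{LSorigin}: if we can establish the bound
$$\sup_{(t,x,p)\in[0,T)\times\mathbb R^3\times\mathbb R^3}\ \int_0^T\big(|E|+|B|\big)(s,X(s;t,x,p))\,ds\ <\ \infty,$$
then, since the initial data is assumed to satisfy \eqref{ini.bd.2.5}--\eqref{ini.bd.6}, Theorem~\ref{LSorigin} directly yields the extension to $[0,T+\epsilon]$ with the stated regularity. The extra hypothesis $\|p_0^{\tilde N}f_0\|_{L^1_xL^1_p}\lesssim C_N$ with $\tilde N>5$ is used only to propagate a finite $p_0^{\tilde N}$-moment of $f$ along the characteristic flow on $[0,T)$, which enters the estimates below as an auxiliary (finite, not necessarily uniformly bounded) quantity; unlike in \cite{Luk-Strain} its exponent need not grow with $\theta$. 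Apart from this, the argument follows the Strichartz-based scheme that Luk--Strain use to prove their $\theta>5$ criterion, and the single new ingredient is a sharper estimate for $\sigma_{-1}$ in terms of the $\theta$-moment of $f$.

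\emph{The key estimate for $\sigma_{-1}$.} I would first prove a pointwise interpolation bound of the form
$$\sigma_{-1}(t,x)\ \lesssim\ \|f(t,x,\cdot)\|_{L^\infty_p}^{\frac{\theta-1}{\theta+1}}\Big(\int_{\mathbb R^3}p_0^{\theta}f(t,x,p)\,dp\Big)^{\frac{2}{\theta+1}}\,\log\!\Big(2+\|f(t,x,\cdot)\|_{L^\infty_p}^{-1}\!\int_{\mathbb R^3}p_0^{\theta}f(t,x,p)\,dp\Big).$$
The proof splits the $p$-integral in the definition of $\sigma_{-1}$ at $|p|=R$. On $\{|p|\ge R\}$ one uses the elementary bound $\tfrac{1}{p_0(1+\hat p\cdot\omega)}\le 2p_0\le 2R^{1-\theta}p_0^{\theta}$ (valid since $\theta>1$, using $1+\hat p\cdot\omega\ge\tfrac12 p_0^{-2}$), which produces a factor $R^{1-\theta}$ times the $\theta$-moment. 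On $\{|p|<R\}$ one uses $\|f\|_{L^\infty_p}$ together with $1+\hat p\cdot\omega\gtrsim\vartheta^2+p_0^{-2}$ ($\vartheta$ the angle of $p$ to $-\omega$) and a dyadic decomposition in $|p|$, which gives $\int_{|p|<R}\tfrac{dp}{p_0(1+\hat p\cdot\omega)}\lesssim R^2\log(2+R)$. Choosing $R$ to balance the two contributions gives the displayed bound. Combined with the $L^\infty$ conservation law \eqref{conserve2} (so $\|f\|_{L^\infty_p}$ is bounded) and the energy identity \eqref{conserve1} (which gives the crude global bound $\|\sigma_{-1}(t,\cdot)\|_{L^1_x}\lesssim\iint p_0 f\,dp\,dx\lesssim 1$), Hölder in $x$ then shows that for each $t\in[0,T)$ and every $q<\tfrac{\theta+1}{2}$, $\sigma_{-1}(t,\cdot)$ lies in $L^q$ of every bounded subset of $\mathbb R^3_x$, with a bound depending only on the size of the set and on $A(t)$. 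The point is that $\tfrac{\theta+1}{2}>2$ exactly when $\theta>3$, which is precisely the gain over the exponent range available in \cite{Luk-Strain}.

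\emph{Estimating the fields along the flow.} Using the Glassey--Strauss decomposition $E=E_0+E_T+E_S$ and $B=B_0+B_T+B_S$, one bounds the three corresponding contributions to $\int_0^T(|E|+|B|)(s,X(s))\,ds$ separately. The term with $E_0,B_0$ is harmless, since these are free evolutions of $H^{D}$ data and hence uniformly bounded in $L^\infty_{t,x}$, so this piece is $\lesssim T$. For $E_T,B_T$ one inserts \eqref{KT}; integrating in $s$ along a characteristic $X(s)=X(s;t,x,p)$ and passing, in the resulting backward--cone integral \eqref{coneintegral}, from the angular/temporal variables to the source point $z$ (the backward light spheres centred on the sub-luminal curve $X(s)$ foliate, without overlap, the region they sweep out) one converts the $|y|^{-2}$ kernel into a spacetime integral of $\sigma_{-1}$ whose kernel in $z$ has size roughly $\big(|z-X(\tau)|^2+(s-\tau)^2(1+|V(s;t,x,p)|^2)^{-1}\big)^{-1}$, i.e. $\lesssim|z-X(\tau)|^{-2}$ but regularized near $z=X(\tau)$ by the geometry; this is controlled by splitting $|z-X(\tau)|\le1$ and $|z-X(\tau)|>1$, using the local $L^q_x$ bound of the previous step ($q$ slightly below $\tfrac{\theta+1}{2}$) together with the null structure of the Glassey--Strauss kernel on the near region, and the global $L^1_x$ bound on the far region. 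For $E_S,B_S$ one uses \eqref{KS}, which exhibits these as $\square^{-1}$ of the non-negative source $(|E|+|B|)\sigma_{-1}$, and applies the inhomogeneous Strichartz estimates for the wave equation \eqref{wave} on $\mathbb R^{1+3}$, splitting the source by Hölder as (energy--bounded) $|E|+|B|$ times $\sigma_{-1}$, and closing the resulting mixed--norm inequality by a standard continuity/bootstrap argument on $[0,T)$ --- legitimate because $T<\infty$ and the relevant dual Strichartz norm of the source over a short sub-interval is small, thanks to the improved integrability of $\sigma_{-1}$. Transferring these bounds back to the characteristic integral verifies the displayed inequality, and Theorem~\ref{LSorigin} then concludes.

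\emph{Main obstacle.} The delicate step is exactly the $E_T,B_T$ analysis together with the closing of the $E_S,B_S$ bootstrap: with $\theta$ only a little above $3$ the function $\sigma_{-1}$ is locally only in $L^q_x$ for $q<\tfrac{\theta+1}{2}$, barely above $L^2_x$ and far below the integrability that $\theta>5$ supplies in \cite{Luk-Strain}. Making the cone and Strichartz estimates close in this weaker regime forces one to use, beyond crude Hölder, both the null structure of the Glassey--Strauss kernels (the gain $|\omega+\hat p|\lesssim(1+\hat p\cdot\omega)^{1/2}$ already present in \eqref{KT}--\eqref{KS}) and the regularizing effect of the $(1+|V(s;t,x,p)|^2)^{-1}$ term coming from the geometry of the sub-luminal flow, and to interpolate carefully between the moment-based local $L^{q}_x$ control and the energy-based global $L^1_x$ control of $\sigma_{-1}$. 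Optimizing this interpolation is what pins the threshold at $\theta=3$.
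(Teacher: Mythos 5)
Your overall target is the right one (verify the field bound \eqref{ini.bd.10} along characteristics and invoke Theorem~\ref{LSorigin}), and your observation that $\theta>3$ is exactly the threshold at which $\sigma_{-1}\in L^{q}_{x}$ for some $q>2$ is the correct starting point --- it matches the paper's Lemma~\ref{sigmainterpolation} ($\|\sigma_{-1}\|_{L^{r}_{x}}\lesssim\|p_{0}^{\nu}f\|_{L^{1}_{x}L^{1}_{p}}^{1/r}$ for $\nu>2r-1$), and it is what makes $\|\sigma_{-1}\|_{L^{\infty}_{t}L^{2}_{x}}\lesssim 1$ available. But your attempt to then bound the characteristic integral \emph{directly} from this $L^{2+}_{x}$ control has a genuine gap at the $E_{T},B_{T}$ step. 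After the Pallard-type change of variables on the backward cone (the paper's Proposition~\ref{Jacobian}), the $K_{T}$ contribution requires Hölder against a kernel whose angular/temporal singularity forces $\sigma_{-1}\in L^{q}_{x}$ with $q>3$ (the conditions $q'-\tfrac{3q'}{q}>-1$ and $(2-i)q'-\tfrac{2q'}{q}>-1$ give $q>3/(3-i)$, i.e.\ $q>3$ for the $i=2$ term); equivalently, the near-region integral $\int_{|z|\le 1}|z|^{-2}\sigma_{-1}\,dz$ cannot be closed with $\sigma_{-1}$ only in $L^{2+}_{x}$, since $|z|^{-2}\in L^{q'}_{loc}(\mathbb{R}^{3})$ only for $q'<3/2$. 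By the interpolation lemma, $\sigma_{-1}\in L^{3+}_{x}$ needs a moment $\nu>5$, so with $\theta$ only slightly above $3$ this step simply does not close. Your appeal to ``null structure'' cannot help here: the gain $|\omega+\hat{p}|\lesssim(1+\hat{p}\cdot\omega)^{1/2}$ is already fully spent in deriving \eqref{KT} with a single power of $(1+\hat{p}\cdot\omega)^{-1}$ inside $\sigma_{-1}$.

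The missing idea --- and the paper's central new mechanism --- is an \emph{iteration on moments}. One does not bound the characteristic integral from the $\theta$-moment directly; instead one feeds the standard moment inequality $\|p_{0}^{N}f\|_{L^{\infty}_{t}L^{1}_{x}L^{1}_{p}}\lesssim\|p_{0}^{N}f_{0}\|_{L^{1}_{x}L^{1}_{p}}+\|K\|_{L^{1}_{t}L^{N+3}_{x}}^{N+3}$ with the estimates \eqref{KTmoment} (spherical averaging operator $W_{2}$) and \eqref{KSmoment} (Strichartz on a partition of $[0,T]$ where $\|\sigma_{-1}\|_{L^{2}_{t}L^{2}_{x}}$ is small --- this is the only place the $L^{2}_{x}$ control of $\sigma_{-1}$ is used, and it is why $\theta>3$ suffices to start). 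Absorbing the factor $\|p_{0}^{N}f\|^{1-\gamma}$ by Young's inequality shows that a bounded moment of order $M>3$ yields a bounded moment of order $2M-3-\delta$, and iterating $M_{i+1}=M_{i}+\tfrac{M_{i}-3}{2}$ reaches a moment strictly above $5$ in finitely many steps. Only \emph{then} does one apply Proposition~\ref{Jacobian}, whose terms $\|\sigma_{-1}\|_{L^{3+}_{x}}$ and $\|K\sigma_{-1}\|_{L^{2+}_{x}}$ are now controlled. This also explains the precise role of the hypothesis $\tilde{N}>5$ on the initial data, which your proposal leaves vague: the initial moment $\|p_{0}^{N}f_{0}\|_{L^{1}_{x}L^{1}_{p}}$ appears on the right-hand side at every step of the iteration, and the final step needs an initial moment strictly between $5$ and $\tilde{N}$. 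Without the iteration your argument cannot be completed as written.
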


\subsubsection{Outline of Proof}

The key to our proof is to gain bounds on $\|p_{0}^{N}f\|_{L^{\infty}_{t}([0,T);L^{1}_{x}L^{1}_{p})}$ for a power of $N>5$ since we later prove that the expression in (\ref{ini.bd.10}) can be bounded by $\|p_{0}^{N}f\|_{L^{1}_{x}L^{1}_{p}}$ where $N = 5 + \lambda$ for any $\lambda > 0$. As proven in Proposition 7.3 in Luk-Strain \cite{Luk-Strain}, we have the following standard moment estimate for $N > 0$:
\bea\label{momentest}
 \|p_{0}^{N}f\|_{L^{\infty}_{t}([0,T);L^{1}_{x}L^{1}_{p})} \lesssim   \|p_{0}^{N}f_{0}\|_{L^{1}_{x}L^{1}_{p}} + \|E\|_{L^{1}_{t}([0,T);L^{N+3}_{x})}^{N+3} + \|B\|_{L^{1}_{t}([0,T);L^{N+3}_{x})}^{N+3}
\eea

Assume $N > 3$. Our goal now is to bound the terms $\|E\|_{L^{1}_{t}([0,T);L^{N+3}_{x})}^{N+3}$ and $\|B\|_{L^{1}_{t}([0,T);L^{N+3}_{x})}^{N+3}$ on the right hand side by $\|p_{0}^{N}f\|_{L^{\infty}_{t}([0,T);L^{1}_{x}L^{1}_{p})}^{\alpha} $ for some $\alpha < 1$. To do so, we employ the Glassey-Strauss decomposition of the field term 
\bea\label{altdec}
\tilde{K} \eqdef (E,B) = (E_{0},B_{0}) +  (E_{T},B_{T}) + (E_{S},B_{S})
\eea
where $E_{0}$ and $B_{0}$ depend only on the initial data of our system. The terms on the right hand side of (\ref{altdec}) have the same bounds as the $K_{T}$ and $K_{S}$ bounds in (\ref{KT}) and (\ref{KS}) respectively. To bound the $K_{T}$ term, we utilize estimates for the averaging operator on the sphere and then apply the interpolation inequality used in Luk-Strain \cite{Luk-Strain}. To do so, we define the operator $$W_{\alpha}(h(t,x))\eqdef \int_{0}^{t}{s^{2-\alpha}\fint_{\mathbb{S}^{2}}{h(t-s,x+s\omega)d\mu(\omega)}ds}, $$ where $\fint_{X} g(x) d\mu(x)$ denotes the average value of the function $g$ over the measure space $(X,\mu)$. We can bound $K_{T}$ with this operator setting $\alpha =2$:
$$ K_{T} \lesssim W_{2}(\sigma_{-1}).$$ Thus, using a known averaging operator estimate and interpolation inequalities, we obtain the following bound on $K_{T}$:
$$\|K_{T}\|_{L^{r}_{t}([0,T];L^{N+3}_{x})} ^{N+3} \lesssim \|p_{0}^{\frac{N(1+\gamma)+3+\delta}{2+\gamma}}f\|_{L^{\infty}_{t}([0,T);L^{1}_{x}L^{1}_{p})}^{2+\gamma}\|p_{0}^{N}f\|_{L^{\infty}_{t}([0,T);L^{1}_{x}L^{1}_{p})}^{1-\gamma} $$ for $1\leq r \leq \infty$, $\gamma \in (0,1)$, $N>3$ and $\delta > 0$.
To bound the $K_{S}$ term, we apply Strichartz estimates for the wave equation and utilize the method from Sogge \cite{Sogge} as used in Kunze \cite{Kunze}. This method requires us to use the assumption that $$\|\sigma_{-1}\|_{L^{\infty}_{t}([0,T];L^{2}_{x})} \lesssim 1.$$ We apply wave equation Strichartz estimates on a partition of the interval $[0,T) = \cup_{i=1}^{k-1} [T_{i},T_{i+1}]$ such that the quantity $\|\sigma_{-1}\|_{L^{\infty}_{t}([T_{i},T_{i+1}];L^{2}_{x})}$ is sufficiently small for us to use an iteration scheme to bound $K_{S}$ over the interval $[0,T]$.

In Luk-Strain \cite{Luk-Strain}, the $K_{T}$ term was bounded by using H\"older's inequality to rewrite the bound (\ref{KS}) in the form of a solution to the wave equation described by (\ref{wave}). Then, they used Strichartz estimates for the wave equation. Instead, we use a more direct approach by using averaging operator estimates. This approach also enables us to preserve the singularity in the $\sigma_{-1}$ denominator, which is useful in reducing the power of $p_{0}$ in the bound of $K_{T}$.
By the bounds on $K_{T}$ and $K_{S}$, we obtain a bound on $\|K\|_{L^{1}_{t}([0,T);L^{N+3}_{x})}^{N+3}$ for some $\gamma \in (0,1)$:

$$ \|K\|_{L^{1}_{t}([0,T);L^{N+3}_{x})}^{N+3} \lesssim 1 + \|p_{0}^{\frac{N(1+\gamma)+3+\delta}{2+\gamma}}f\|_{L^{\infty}_{t}([0,T);L^{1}_{x}L^{1}_{p})}^{2+\gamma}\|p_{0}^{N}f\|_{L^{\infty}_{t}([0,T);L^{1}_{x}L^{1}_{p})}^{1-\gamma}$$ where the implicit constant in this inequality also depends on the quantity $\|\sigma_{-1}\|_{L^{\infty}_{t}([0,T];L^{2}_{x})}$.

Thus, assuming that $\|p_{0}^{\frac{N(1+\gamma)+3+\delta}{2+\gamma}}f\|_{L^{\infty}_{t}([0,T);L^{1}_{x}L^{1}_{p})} \lesssim 1$, we can insert this estimate into the standard moment estimate above to gain a higher moment bound $\|p_{0}^{N}f\|_{L^{\infty}_{t}([0,T);L^{1}_{x}L^{1}_{p})} \lesssim 1$. By an iteration of this process, we eventually arrive at the bound $\|p_{0}^{\hat{N}}f\|_{L^{\infty}_{t}([0,T);L^{1}_{x}L^{1}_{p})} \lesssim 1$ for some $\hat{N} > 5$, which proves the result of Theorem \ref{MAIN}. Our proof of Theorem \ref{MAIN} relies on this new method of incrementally using lower moment bounds to gain control over slightly higher moment bounds, as compared to directly bounding all arbitrarily large moments by some fixed small moment.

Kunze \cite{Kunze} proves this result in his paper with the assumption of initial compact support in the momentum variable. This method allows him to save an entire power of $p_{0}$ and use a Gronwall-type inequality to bound the momentum support at time $T$. We do not have this extra control given by the momentum support of $f$ and needed a wider range of bounds on the $K_{T}$ term. Actually, our method for bounding $K_{T}$ can give us strictly better bounds than those of Kunze \cite{Kunze}. In \cite{Kunze}, for $2\leq r < 6$, Kunze proves the bound:
\bea
\|K_{T}\|_{L^{r}_{t}([0,T];L^{r}_{x})}\leq C_{T} \|\sigma_{-1}\|_{L^{\infty}_{t}([0,T];L^{2}_{x})}.
\eea

In comparison, we prove the bound in (\ref{OpEst}) and Proposition \ref{newKTest} which lowers the Lebesgue exponent of the norm on $\sigma_{-1}$: $$\|K_{T}\|_{L^{\infty}_{t}L^{mq}_{x}} \lesssim \|\sigma_{-1}\|_{L^{\infty}_{t}L^{q}_{x}}$$ for $1\leq m \leq 3$, $q>3-\frac{3}{m}$ and $\frac{3m-1}{2m} \leq q \leq \infty$. For the purposes of this problem, obtaining lower Lebesgue exponents yields better estimates because by interpolation $$\|\sigma_{-1}\|_{L^{\infty}_{t}([0,T];L^{q}_{x})} \lesssim \|p_{0}^{2q-1+\nu}f\|_{L^{1}_{x,p}}$$ for some $\nu > 0$. Thus, lower powers of $q$ yield lower powers of $p_{0}$ on the right hand side. This allows us to bound Lebesgue norms of $K$ by lower moments (i.e. lower powers of $p_{0}$), which gives us better control on $K$. We use this extra control on $\|K_{T}\|_{L^{r}_{x}}$ in the range $2 \leq r \leq 6$ by lower moments to help us in the case of initial data with compact support, in which we prove the following:

\begin{thm}\label{main3}
Consider initial data $(f_{0},E_{0}, B_{0})$ satisfying the conditions in Theorem \ref{GSresult} and $(f,E,B)$ is the unique classical solution to (\ref{vlasov})-(\ref{constraints}) in the interval $[0,T)$. Suppose we impose the additional assumption that
\bea\label{compactcriteria}
\|p_{0}^{\frac{18}{5r} - 1+\beta}f\|_{L^{\infty}_{t}L^{r}_{x}L^{1}_{p}} \lesssim 1
\eea
for some $1\leq r \leq 2$ and some $\beta >0$ arbitrarily small. Then, we can continuously extend our solution $(f,E,B)$ to an interval $[0,T+\epsilon]$ in $C^{1}$ for some $\epsilon > 0$.
\end{thm}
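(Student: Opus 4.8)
The plan is to run the Glassey--Strauss/Kunze scheme in the compact setting, feeding into it the sharper bound for the ``time'' part of the Glassey--Strauss decomposition coming from averaging operators. By Theorem \ref{GSresult} it suffices to show $\sup_{t\in[0,T)}P(t)<\infty$, with $P$ as in \eqref{P(T)}. Integrating the characteristic equation \eqref{char2} gives $P(t)\le P(0)+\sup_{(t,x,p)}\int_0^t|\tilde K|(s,X(s;t,x,p))\,ds$ with $\tilde K=(E,B)=(E_0,B_0)+(E_T,B_T)+(E_S,B_S)$; the initial-data part is $O(1)$ on $[0,T)$, and by finite propagation speed the $x$-support of $f(t)$ lies in a fixed ball $B_T$ for $t<T$. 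So the task is to control the along-characteristic cone integrals of $(E_T,B_T)$ and $(E_S,B_S)$, which obey the $K_T$, $K_S$ bounds \eqref{KT}, \eqref{KS}, by something that closes a Gronwall inequality for $P$, with the hypothesis \eqref{compactcriteria} supplying the non-$P$ part.

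First I would record a pointwise bound for $\sigma_{-1}$ exploiting the compact momentum support: splitting the defining $p$-integral at $p_0\sim\lambda$, bounding the low part by $\|f\|_{L^\infty_{x,p}}\int_{p_0\le\lambda}\frac{dp}{p_0(1+\hat p\cdot\omega)}\lesssim\lambda^2\log\lambda$ and using $\frac{1}{p_0(1+\hat p\cdot\omega)}\le 2p_0$ (from $1+\hat p\cdot\omega\ge\frac{1}{2p_0^2}$) on the high part, then optimizing in $\lambda$, yields $\sigma_{-1}(t,x)\lesssim_{\log}\big(\int p_0^{a}f(t,x,p)\,dp\big)^{2/(a+1)}$ and hence $\|\sigma_{-1}(t)\|_{L^q_x}\lesssim_{\log}\|p_0^{a}f(t)\|_{L^{2q/(a+1)}_x L^1_p}^{2/(a+1)}$. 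Taking $a=\frac{18}{5r}-1+\beta$ and matching $\frac{2q}{a+1}=r$ forces $q=\frac{r(a+1)}{2}=\frac95+\frac{r\beta}{2}$, so \eqref{compactcriteria} yields $\|\sigma_{-1}\|_{L^\infty_t L^q_x}\lesssim 1$ (up to a harmless power of $P$ once $r\ge\frac95$) at this $q$ slightly above $\frac95$. Interpolating over $B_T$ against the crude bound $\|\sigma_{-1}\|_{L^\infty_x}\lesssim P^2\log P$ then gives $\|\sigma_{-1}\|_{L^\infty_t L^{q'}_x}\lesssim P(t)^{2(1-q/q')}$ for $q'>q$, e.g.\ $\|\sigma_{-1}\|_{L^\infty_t L^2_x}\lesssim P(t)^{1/5}$ up to logarithms.

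Next, apply Proposition \ref{newKTest}, i.e.\ \eqref{OpEst}: $\|K_T\|_{L^\infty_t L^{mq}_x}\lesssim\|\sigma_{-1}\|_{L^\infty_t L^q_x}$ for $1\le m\le 3$, $q>3-\frac3m$, $q\ge\frac{3m-1}{2m}$. With $q\approx\frac95$ the first constraint permits $m$ essentially up to $\frac52$, so $\|K_T\|_{L^\infty_t L^s_x}\lesssim 1$ for some $s>4$, and larger Lebesgue exponents are controlled at the cost of a power of $P$ from the previous paragraph. The point is that on the relevant range $2\le s\le 6$ this uses $\|\sigma_{-1}\|_{L^{9/5}_x}$ where Kunze's $\|K_T\|_{L^s_t L^s_x}\lesssim\|\sigma_{-1}\|_{L^\infty_t L^2_x}$ uses the larger exponent $2$; by the interpolation above $\|\sigma_{-1}\|_{L^{9/5}_x}$ is controlled by the smaller moment $\|p_0^{18/(5r)-1+\beta}f\|_{L^r_x L^1_p}$ rather than $\|p_0^{4/r-1+\beta}f\|_{L^r_x L^1_p}$, which is the source of the improvement. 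For $K_S$ I would use $|K_S|\lesssim\square^{-1}(|K|\sigma_{-1})$ from \eqref{KS}, wave-equation Strichartz estimates and H\"older, and a partition of $[0,T)$ into subintervals on which the $\sigma_{-1}$-weighted forcing is small enough to absorb $\|K_S\|$ into itself, summing over the finitely many ($P$-dependent) subintervals as in the outline of Theorem \ref{MAIN}; this step still uses $\|\sigma_{-1}\|_{L^\infty_t L^2_x}\lesssim P(t)^{1/5}$.

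Feeding these field bounds into the along-characteristic cone integrals as in \cite{Glassey-Strauss}, \cite{Kunze} --- where the compact momentum support lets one save a power of $p_0$ --- produces a closed inequality $P(t)\lesssim 1+\int_0^t A(s)\,P(s)^{\alpha}\,ds$ with $\alpha\le 1$, hence $\sup_{[0,T)}P<\infty$ on the finite interval, and Theorem \ref{GSresult} finishes; equivalently one can extract $\|p_0^{3^+}f\|_{L^\infty_t L^1_{x,p}}\lesssim 1$ from the $q\approx\frac95$ estimate and then run the incremental moment bootstrap of Theorem \ref{MAIN} verbatim, concluding through the moment estimate \eqref{momentest} and \eqref{ini.bd.10}. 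I expect the main obstacle to be exactly this final bookkeeping: checking that $\frac{18}{5r}$ is precisely the exponent for which $\|\sigma_{-1}\|_{L^\infty_t L^{9/5}_x}$ is controlled, and that every $P$-factor introduced --- from the $L^{q'}_x$ interpolation, from the count of $K_S$-subintervals, and from passing between $L^1_{x,p}$ and $L^r_x L^1_p$ moments on $B_T$ --- enters with combined exponent at most $1$, so that the Gronwall argument (or the absorption step in the bootstrap) goes through; the sharpness of Proposition \ref{newKTest} on the range $2\le s\le 6$ is what provides exactly this room.
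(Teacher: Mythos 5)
Your treatment of $K_T$ is sound and in fact matches the paper's arithmetic: the hypothesis $\|p_0^{18/(5r)-1+\beta}f\|_{L^\infty_t L^r_x L^1_p}\lesssim 1$ controls $\|\sigma_{-1}\|_{L^\infty_t L^{9/5+}_x}$ uniformly via Lemma \ref{sigmainterpolation} (equivalently, it controls $\|\sigma_{-1}\|_{L^{12/5\pm}_x}$ at the cost of $P(T)^{1/2-}$, which is how the paper phrases it), and Proposition \ref{newKTest} then gives $\|K_T\|_{L^\infty_t L^{4+}_x}$. The genuine gap is in your treatment of $K_S$. You propose to use $|K_S|\lesssim\square^{-1}(|K|\sigma_{-1})$ with Strichartz and absorption on subintervals where the $\sigma_{-1}$-weighted forcing is small, while only knowing $\|\sigma_{-1}\|_{L^\infty_t L^2_x}\lesssim P(t)^{1/5}$. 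The absorption scheme requires $\|\sigma_{-1}\|_{L^2_t L^2_x([T_i,T_{i+1}])}\le (2C_\gamma)^{-1}$ on each piece, so the number of subintervals is $N\sim T\,P(T)^{2/5}$, and the iteration accumulates a factor $(2C_\gamma)^{N}$ --- i.e.\ the resulting bound on $K_S$ grows \emph{exponentially} in a power of $P(T)$. This cannot "enter with combined exponent at most $1$"; no Gronwall or absorption argument closes afterwards. (Your fallback --- extract $\|p_0^{3+}f\|_{L^1_{x,p}}\lesssim 1$ and run the bootstrap of Theorem \ref{MAIN} --- also fails: for $r=1$ the hypothesis only gives the moment $13/5+\beta<3$, and the bootstrap itself assumes $\|\sigma_{-1}\|_{L^\infty_t L^2_x}\lesssim 1$.)

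The paper closes exactly this gap with the finer Luk--Strain decomposition $|K|\le|K_0|+|K_T|+|K_{S,1}|+|K_{S,2}|$ (Proposition \ref{LSplanedecomp}), which you do not use. The piece $K_{S,2}$ is bounded by $(W_2(\sigma_{-1}^2))^{1/2}$ using the conserved flux $\|K_g\|_{L^2(C_{t,x})}\lesssim 1$ on the light cone, so it is handled by the same averaging-operator estimate as $K_T$ with no Strichartz iteration at all. The piece $K_{S,1}$ satisfies $|K_{S,1}|\lesssim\square^{-1}(|K|\Phi_{-1})$, where $\Phi_{-1}$ carries only the singularity $(1+\hat p\cdot\omega)^{-1/2}$; consequently $\|\Phi_{-1}\|_{L^\infty_t L^2_x}\lesssim\|p_0^{2/r-1+\beta}f\|_{L^r_xL^1_p}^{r/2}\lesssim 1$ follows \emph{uniformly} from \eqref{compactcriteria} (since $2/r<18/(5r)$), so the subinterval count in the Strichartz iteration is an absolute constant independent of $P$. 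The final closure also differs from your sketch: the paper uses Pallard's refined characteristic decomposition (Proposition \ref{finalPTest}) to reduce everything to $\|\sigma_{-1}|K|\ln^{1/3}(1+P)\|_{L^1_tL^{3/2}_x}$, pairs $\|K\|_{L^{4+}_x}\lesssim 1+\|\sigma_{-1}\|_{L^{12/5+}_x}\lesssim P(T)^{1/2-}$ with $\|\sigma_{-1}\|_{L^{12/5-}_x}\lesssim P(T)^{1/2-}$ by H\"older, and closes via $P(T)\lesssim 1+\ln^{1/3}(1+P(T))P(T)^{1-}$. Without the $K_{S,1}$/$K_{S,2}$ splitting and the $\Phi_{-1}$ observation, your argument does not go through.
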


The exponent of $p_{0}$ in (\ref{compactcriteria}) is strictly better than the exponent found in the result stated in Theorem \ref{KunzeThm} since $\frac{18}{5r}-1+\beta < \frac{4}{r}-1+\beta$. For example, if $r =1$, our criteria is $\|p_{0}^{\frac{13}{5}+\beta}f\|_{L^{\infty}_{t}L^{1}_{x}L^{1}_{p}} \lesssim 1$ which is better than the known criteria of $\|p_{0}^{3+\beta}f\|_{L^{\infty}_{t}L^{1}_{x}L^{1}_{p}} \lesssim 1$ due to \cite{Kunze}. Similarly for the $r=2$ case, our criteria is $\|p_{0}^{\frac{4}{5}+\beta}f\|_{L^{\infty}_{t}L^{2}_{x}L^{1}_{p}} \lesssim 1$ which is better than the known criteria of $\|p_{0}f\|_{L^{\infty}_{t}L^{2}_{x}L^{1}_{p}} \lesssim 1$ due to \cite{Kunze}.

\subsubsection{Outline of Proof}

The first step to proving Theorem \ref{main3} is to utilize the decomposition 
\bea
|E| + |B| \leq |E_{0}| + |B_{0}| + |E_{T}| + |B_{T}| + |E_{S,1}| + |B_{S,1}| + |E_{S,2}| + |B_{S,2}|
\eea
as in Luk-Strain \cite{L-S}. The advantage to this decomposition is that it allows us to utilize the conservation of the $L^{2}$ norm of $|K_{g}| = (|E\cdot \omega|^{2} + |B \cdot \omega|^{2} + |E - \omega \times B|^{2} + |B + \omega\times E|^{2})^{\frac{1}{2}}$ on the space-time cone and it also reduces the power of $1+\hat{p}\cdot \omega$ in the $K_{S,1}$ term by a power of $\frac{1}{2}$. This decomposition of $K_{S}$ into the two terms $K_{S,1}$ and $K_{S,2}$ allows us to gain better bounds on the $K_{S}$ part of the field decomposition. We can bound each element of this decomposition as follows:
$$|K_{T}| = |E_{T}| + |B_{T}| \lesssim W_{2}(\sigma_{-1})$$

$$|K_{S,1}| = |E_{S,1}| + |B_{S,1}| \lesssim \square^{-1} (|K| \Phi_{-1})$$

$$|K_{S,2}| = |E_{S,2}| + |B_{S,2}| \lesssim (W_{2}(\sigma_{-1}^{2}))^{\frac{1}{2}}$$
where
$$\Phi_{-1}(t,x) \eqdef \max\limits_{|\omega|=1}\int_{\mathbb{R}^{3}} \frac{f(t,x,p)dp}{p_{0}(1+\hat{p}\cdot\omega)^{\frac{1}{2}}}.$$

As in the proof of Theorem \ref{MAIN}, we can apply averaging operator estimates to the $K_{T}$ and $K_{S,2}$ to get the bounds
$$ \|K_{S,2}\|_{L^{\infty}_{t}L^{2mq}_{x}} \lesssim \|\sigma_{-1}\|_{L^{\infty}_{t}L^{2q}_{x}}$$
and
$$\|K_{T}\|_{L^{\infty}_{t}L^{mq}_{x}} \lesssim \|\sigma_{-1}\|_{L^{\infty}_{t}L^{q}_{x}}$$
where $q > 3-\frac{3}{m}$ and $\frac{3m-1}{2m} \leq q \leq \infty$ for $1\leq m \leq 3$. (Note that this is where we will use the improved estimate on $\|K_{T}\|_{L^{r}_{x}}$ in the range $2 \leq r \leq 6$, which also give us bounds on the $K_{S,2}$ term. Specifically, in this paper, we use the exponent $r = 4 + \delta$ for some $\delta > 0$ appropriately small.) Using these estimates and using Strichartz estimates, we apply similar techniques as in the proof of Theorem \ref{MAIN} to the $K_{S,1}$ term to obtain bounds on $K$. We are given better control of the $K_{S,1}$ term because in the inequality $|K_{S,1}|\lesssim \square^{-1}(|K|\Phi_{-1})$, $\Phi_{-1}$ has a lower power of singularity in the denominator than $\sigma_{-1}$. (We partition our time interval $[0,T]$ under the assumption that $\|\Phi_{-1}\|_{L^{\infty}_{t}([0,T];L^{2}_{x})} \lesssim 1$, which is a weaker assumption that $\|\sigma_{-1}\|_{L^{\infty}_{t}([0,T];L^{2}_{x})}\lesssim 1$.)

The goal of our bound on $K$ in this proof is not to gain bounds on higher moments, as in the proof of Theorem \ref{MAIN}. Instead, we use an idea of Pallard \cite{refinedPallard} and bound the integral of the electric field over the characteristics by appropriate Lebesgue norms involving $f$ and $K$:
$$|P(T)| \lesssim 1 + \|\sigma_{-1}\|_{L^{\infty}_{t}L^{3+}_{x}} + \|\sigma_{-1}|K|\ln^{\frac{1}{3}}(1+P(t))\|_{L^{1}_{t}L^{\frac{3}{2}}_{x} ([0,T]\times \mathbb{R}^{3})}.$$
Using the bounds on $\|K\|_{L^{\infty}_{t}([0,T);L^{r}_{x})}$ for some exponent $r > 4$ appropriately close to $4$ and interpolation inequalities, we can then bound these terms by powers of $P(T)$ smaller than $1$ to obtain an inequality of the form:
$$ P(T) \lesssim 1 + P(T)^{\gamma}\ln^{\lambda}(P(t)) $$ for some $\gamma \in [0,1)$ and $\lambda > 0$. From here, we conclude that $P(T)\lesssim  1$.
\\
\par Our final result improves the continuation criteria due to Luk-Strain in \cite{L-S}. First, consider a family of planes $\{Q(t)\}_{t\in [0,T]}$. At $t=0$, we choose a normal vector $n_{3}(0)$ orthogonal to the plane $Q(0)$ at the origin.

\begin{defn}
	A family of planes $\{Q(t)\}_{t\in [0,T]}$ containing the origin is considered to be \textbf{uniformly continuous family of planes} in the following sense:
	There exists a partition $[T_{i}, T_{i+1})$ of $[0,T)$ such that locally in a small time interval, for say $ s\in[T_{i},T_{i+1})$, we can let $n_{3}(s)$ be the normal to $Q(s)$ at the origin that is on the same half of $\mathbb{R}^{3}$ as a $n_{3}(T_{i})$, meaning $\angle(n_{3}(s),n_{3}(T_{i})) < \angle(n_{3}(s),-n_{3}(T_{i}))$, where $\angle(v,w)\eqdef \cos^{-1}\big(\frac{v\cdot w}{|v||w|}\big)$. Then, the map $n_{3}: [0,T)\rightarrow \mathbb{S}^{2}$ is uniformly continuous.
\end{defn}

Using this definition, we prove the following:
\begin{thm}\label{main1}
	Suppose $f_{0}(x,p) \in H^{5}(\mathbb{R}^{3}\times\mathbb{R}^{3})$ with compact support in $(x,p)$, $E_{0}, B_{0}\in H^{5}(\mathbb{R}^{3})$. Let $(f,E,B)$ be the classical solution in $L^{\infty}_{t}([0,T);H^{5}_{x,p}) \times L^{\infty}_{t}([0,T);H^{5}_{x}) \times L^{\infty}_{t}([0,T);H^{5}_{x})$ to the Vlasov-Maxwell system in $[0,T)$. Let $\{Q(t)\}$ be a uniformly continuous family of planes containing the origin such that there exists a bounded, continuous function $\kappa: [0,T)\rightarrow \mathbb{R}_{+}$ such that
	
	$$f(t,x,p) = 0 \text{ for } |\mathbb{P}_{Q(t)}p|\geq \kappa(t) \ \forall x\in \mathbb{R}$$
	
	Then there exists $\epsilon > 0$ such that our solution can be extended continuously in time in $H^{5}$ to $[0, T+\epsilon]$.
\end{thm}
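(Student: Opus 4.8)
The plan is to reduce the statement, via Theorem \ref{GSresult}, to the single bound $\sup_{t\in[0,T)}P(t)<\infty$, and to obtain this bound by running, on each piece of a finite partition of $[0,T)$ adapted to the family $\{Q(t)\}$, the momentum‑support estimates of Luk--Strain \cite{L-S} that underlie Theorem \ref{planesuppLS}. Since the solution is classical on $[0,T)$, the characteristic system \eqref{char1}--\eqref{char2} gives $\frac{d}{ds}|V|\le |E|+|B|$ and hence $P(t)\le P(0)+\int_0^t\big(\|E(s)\|_{L^\infty_x}+\|B(s)\|_{L^\infty_x}\big)\,ds$ with each integrand finite for $s<T$; so it suffices to bound $\int_0^T(|E|+|B|)$ along characteristics, for which we use the Glassey--Strauss decomposition $|E|+|B|\lesssim |E_0|+|B_0|+|K_T|+|K_{S,1}|+|K_{S,2}|$ together with the pointwise bounds $|K_T|\lesssim W_2(\sigma_{-1})$, $|K_{S,1}|\lesssim\square^{-1}(|K|\Phi_{-1})$, $|K_{S,2}|\lesssim(W_2(\sigma_{-1}^2))^{1/2}$ recalled above. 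The new element beyond \cite{L-S} is that the confining plane now moves with $t$.

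Using that $n_3:[0,T)\to\mathbb{S}^2$ is uniformly continuous, I would fix $\kappa_{\max}:=\sup_{[0,T)}\kappa<\infty$ and a small angle $\delta_0>0$ to be chosen, and take a finite partition $0=T_0<T_1<\dots<T_m=T$ with $\angle(n_3(s),n_3(T_i))<\delta_0$ for all $s\in[T_i,T_{i+1})$. Since $\|\mathbb{P}_{Q(s)}-\mathbb{P}_{Q(T_i)}\|\lesssim\angle(n_3(s),n_3(T_i))<\delta_0$ in operator norm, the hypothesis $f(s,x,p)=0$ for $|\mathbb{P}_{Q(s)}p|\ge\kappa(s)$ yields, for $s\in[T_i,T_{i+1})$, that $f(s,x,p)\neq0$ forces $|\mathbb{P}_{Q(T_i)}p|\le\kappa_{\max}+C\delta_0|p|$; equivalently, writing $p=a\,n_3(T_i)+q$ with $q\perp n_3(T_i)$, one has $|q|\lesssim\kappa_{\max}+\delta_0|a|$, so that $\hat p$ lies within distance $O\big(\delta_0+\kappa_{\max}|p|^{-1}\big)$ of the pair $\{\pm n_3(T_i)\}$ on $\mathbb{S}^2$, hence within an $O(\delta_0)$ neighbourhood once $|p|$ is large. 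On each subinterval $[T_i,T_{i+1})$, treating $(f,E,B)(T_i,\cdot)$ as data, I would re-run the proof of Theorem \ref{planesuppLS}: the essential structural input there is precisely that the momentum support at each time is concentrated near the line $\mathbb{R}\,n_3$ through the origin, i.e. that $\hat p$ is confined to a small neighbourhood of $\{\pm n_3\}$; with the fixed plane replaced by $Q(T_i)$ this holds up to the extra $O(\delta_0)$ spreading, and the finer Glassey--Strauss decomposition, the conservation of $\|K_g\|_{L^2}$ on the space--time cone $C_{t,x}$ (a geometric identity, independent of the plane), the averaging‑operator bounds on $K_T$ and $K_{S,2}$, and the Strichartz iteration for $K_{S,1}$ on a further sub‑partition where $\|\Phi_{-1}\|_{L^\infty_tL^2_x}$ is small all carry over with constants depending only on $\kappa_{\max}$, $\delta_0$, the subinterval length $\le T$, the conserved energy, $\|f_0\|_{L^\infty}$ and $P(T_i)$. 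This yields $\sup_{[T_i,T_{i+1})}P\le\mathcal{C}\big(P(T_i),\text{data},T\big)$, and iterating over the $m$ subintervals gives $\sup_{[0,T)}P<\infty$; since $P$ from \eqref{P(T)} is non‑decreasing, the constant majorant then serves as the bounded continuous function required in Theorem \ref{GSresult}, which extends the solution to $[0,T+\epsilon]$ in $C^1$.

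The main obstacle is exactly the passage from the exact slab $\{|\mathbb{P}_Qp|\le\kappa(t)\}$ used in \cite{L-S} to the $\delta_0$‑fattened cone above: there the in‑plane component of $p$ is bounded by the fixed quantity $\kappa(t)$, whereas here it is only bounded by $\kappa(t)+O(\delta_0)|p|$, which grows with $|p|$. One must therefore verify that the singular‑momentum integrals governing $\sigma_{-1}$, $\Phi_{-1}$, $K_T$, $K_{S,1}$ and $K_{S,2}$ depend continuously on the opening angle of the momentum‑support region and reduce to the slab estimates of \cite{L-S} as that angle tends to $0$, so that the enlargement costs only factors that are $o(1)$ as $\delta_0\to0$ — morally, the relevant singular set on $\mathbb{S}^2$ has measure $O(\lambda+\delta_0^2)$ rather than $O(\lambda)$ — and can be absorbed by taking the partition fine enough. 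This is legitimate because only finitely many subintervals are ever required: $\delta_0$, and hence $m$, is fixed after $\kappa_{\max}$ and the initial data but before the iteration, so $m$ never feeds back into the constants, and since the per‑subinterval estimate depends on the left endpoint only through $P(T_i)$ with data‑independent exponents, $m$ applications still produce a finite bound. A subsidiary point to check is that each $P(T_i)$ is indeed finite: this follows because $P$ is non‑decreasing and bounded on $[0,T_i)$ by the previous step, and the solution, being classical on all of $[0,T)$, is continuous across $T_i$.
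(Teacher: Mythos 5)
Your overall architecture (reduce to $\sup_{[0,T)}P<\infty$ via Theorem \ref{GSresult}, use the Glassey--Strauss pieces $K_T,K_{S,1},K_{S,2}$, and partition $[0,T)$ so that the plane is nearly frozen on each piece) matches the paper's. But there is a genuine gap at exactly the point you flag as ``the main obstacle,'' and your proposed resolution does not work. You fix a small angle $\delta_0$ \emph{in advance} (depending only on $\kappa_{\max}$ and the data) and replace the exact slab $\{|\mathbb{P}_{Q(s)}p|\le\kappa\}$ by the fattened set $\{|\mathbb{P}_{Q(T_i)}p|\le\kappa_{\max}+C\delta_0|p|\}$. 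That set is a cone of \emph{fixed} opening angle $\sim\delta_0$ about $\pm n_3(T_i)$, and inside that cone $|p|$ is unconstrained up to $P(t)$. The whole engine of the Luk--Strain argument (here Proposition \ref{maintool} and inequality \eqref{pnorm}) is the implication $|p|\lesssim\kappa/\angle(\tfrac{p}{|p|},\pm n_3)$, i.e.\ that large momenta are forced into an angular cap of radius $\sim\kappa/|p|$ that \emph{shrinks} with $|p|$. With your fattening this fails for $\angle(\tfrac{p}{|p|},\pm n_3(T_i))\lesssim\delta_0$: the region-$II$ integral in Proposition \ref{maintool} then picks up a contribution of order $\delta_0^2P(t)^2\,\beta^{-2}$ on top of $A^4\beta^{-2}$. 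Since $P(t)$ is the unknown being bounded, this extra term is not $o(1)$ as $\delta_0\to0$ for any $\delta_0$ fixed before the bootstrap, and the resulting inequality for $P$ no longer closes; your measure count ``$O(\lambda+\delta_0^2)$'' misses that the problem is not the measure of the bad angular set but the unbounded size of $|p|$ on it.

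The paper resolves this differently, in two ways you should note. First, the angular tolerance of the partition is taken to be $\angle(n_3(s),n_3(T_i))<\tfrac{1}{4}P(t)^{-1}$ (condition \eqref{condition2}), i.e.\ it scales with the quantity being controlled rather than being fixed a priori; this is legitimate because $P(t)<\infty$ for each $t<T$ and $n_3$ is uniformly continuous, so each partition is still finite, and the final estimates (e.g.\ Proposition \ref{ETestimate}) sum over the partition into integrals $\int_0^tA(s)^4\,ds$ with no dependence on the number of subintervals, so nothing feeds back. Second, and more importantly, the paper never fattens the momentum support at all: Proposition \ref{maintool} is applied pointwise in $s$ with the \emph{instantaneous} plane $Q(s)$, so the slab structure is exact, and the frozen frame $n_3(T_i)$, $\omega^{(i)}$ is used only to carry out the angular integration over the light cone; the $P(t)^{-1}/4$ closeness is what lets one replace $\angle(n_3(s),\pm\omega^{(i)})^{-2}$ by $(\theta^{(i)})^{-2}$ on the region $\theta^{(i)}\in[P(t)^{-1},\pi-P(t)^{-1}]$. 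To repair your argument you would need to adopt both devices (or at least make $\delta_0\lesssim P(t)^{-1}$ so that $\delta_0|p|\lesssim1$ on the support); your remaining steps (the $K_{S,1}$ Strichartz iteration, the $\|K_g\|_{L^2(C_{t,x})}$ conservation, the final Gronwall) then go through as in the paper.
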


A more general theorem can be proven. Theorem \ref{main1} will be a special case of this theorem. First, we need to define a time dependent coordinate system on $\mathbb{R}^{3}$ which will depend on the plane $Q(t)$. Let $\{n_{1}(t),n_{2}(t),n_{3}(t)\}$ be unit vectors such that $\{n_{1}(t),n_{2}(t)\}$ span $Q(t)$ and $n_{3}(t)$ is the unit normal to $Q(t)$ as defined earlier.


\par Fix a time $t\in [0,T)$. By uniform continuity of $n_{3}(t)$, there exists a partition of $[0,t)= \cup_{i=0}^{n_{t}}[T_{i},T_{i+1})$ (the number of intervals in the partition $n_{t}$ depends on $t$ and $T_{n_{t}+1}=t$) such that for $s\in[T_{i},T_{i+1})$, we have:
\bea\label{condition1}
\angle(n_{3}(s),n_{3}(T_{i})) < \angle(-n_{3}(s),n_{3}(T_{i}))
\eea
and
\bea\label{condition2}
\angle(n_{3}(s),n_{3}(T_{i})) < \frac{P(t)^{-1}}{4}
\eea

We will use this precise partition for the proof of Theorem \ref{main2} in this paper.
\begin{thm}\label{main2}
	Suppose $f_{0}(x,p) \in H^{5}(\mathbb{R}^{3}\times\mathbb{R}^{3})$ with compact support in $(x,p)$, $E_{0}, B_{0}\in H^{5}(\mathbb{R}^{3})$. Let $(f,E,B)$ be the classical solution in $L^{\infty}_{t}([0,T);H^{5}_{x,p}) \times L^{\infty}_{t}([0,T);H^{5}_{x}) \times L^{\infty}_{t}([0,T);H^{5}_{x})$ to the Vlasov-Maxwell system in $[0,T)$. Let $\{Q(t)\}$ be a uniformly continuous family of planes containing the origin. Suppose for each $t\in [0,T)$, there exists a measurable, positive function $\kappa: [0,T)\times[0,2\pi]\rightarrow \mathbb{R}_{+}$ such that $\kappa(t,\gamma) >1$,
	
	$$\text{sup}\{|\mathbb{P}_{Q(t)}p| : \frac{p\cdot n_{2}(t)}{p\cdot n_{1}(t)} = tan(\gamma), f(t,x,p)\neq 0 \text{ for some } x\in \mathbb{R}^{3}\} < \kappa(t,\gamma)$$ and
	
	$$\int_{0}^{T}{\Big(A(t)^{2}+(\int_{0}^{t}{A(s)^{8}ds})^{\frac{1}{2}}\Big)} dt < +\infty \text{ where } A(t) = \|\kappa(t,\cdot)\|_{L^{4}_{\gamma}}$$
	Then there exists $\epsilon > 0$ such that our solution can be extended continuously in time to $[0, T+\epsilon]$.
\end{thm}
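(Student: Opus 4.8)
The plan is to reduce the statement to an a priori bound on the full momentum support $P(T):=\sup_{t\in[0,T)}P(t)$ and then quote Theorem~\ref{GSresult}. Since $(f,E,B)$ issues from data with compact support in $(x,p)$, the characteristic flow \eqref{char1}--\eqref{char.data} keeps $f(t,\cdot,\cdot)$ compactly supported for each $t<T$, and by \eqref{P(T)} the map $t\mapsto P(t)$ is continuous and nondecreasing; the hypotheses only control the in-plane projection $\mathbb{P}_{Q(t)}p$ of the support, so the real content is to keep the normal component $p\cdot n_{3}(t)$ from escaping to infinity as $t\to T$. I would run a bootstrap on $P^{*}(t):=\sup_{s\le t}P(s)$ aiming, as in the outline of Theorem~\ref{main3} and generalizing the fixed-plane argument of \cite{L-S}, at an inequality $P^{*}(t)\lesssim 1+P^{*}(t)^{\gamma}\ln^{\lambda}(P^{*}(t))$ with $\gamma\in[0,1)$ and constants uniform in $t<T$; absorbing then gives $P^{*}(T)\lesssim 1$ and the conclusion follows.

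The heart of the matter is a field estimate. I would use the Glassey--Strauss decomposition $|E|+|B|\lesssim|K_{0}|+|K_{T}|+|K_{S,1}|+|K_{S,2}|$ of \cite{L-S}, together with $|K_{T}|\lesssim W_{2}(\sigma_{-1})$, $|K_{S,2}|\lesssim(W_{2}(\sigma_{-1}^{2}))^{1/2}$ and $|K_{S,1}|\lesssim\square^{-1}(|K|\Phi_{-1})$, and control $\sigma_{-1}$ and $\Phi_{-1}$ via the plane confinement. Writing $p=p_{Q}+p_{\perp}n_{3}(t)$, the slab $\{|p_{Q}|\le\kappa(t,\gamma)\}$ is unbounded in $p_{\perp}$, but the weight $1/p_{0}$ in $\sigma_{-1}$ and $\Phi_{-1}$, the bound $\int_{\mathbb{S}^{2}}(1+\hat p\cdot\omega)^{-1}\,d\mu(\omega)\lesssim 1+\ln p_{0}$, and the conservation laws \eqref{conserve1}--\eqref{conserve2} (which restrict the spatial region where $f$ carries substantial mass at large $|p_{\perp}|$) together make the $p_{\perp}$-integration harmless up to powers of $\ln P^{*}(t)$. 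Carrying out the remaining two-dimensional integration over $Q(t)$ in polar coordinates $(\rho,\gamma)$ produces a factor $\int_{0}^{\kappa(t,\gamma)}\rho\,d\rho\approx\kappa(t,\gamma)^{2}$, and Hölder in $\gamma$ tuned to the averaging/Strichartz exponent $r=4+\delta$ used for $\|K_{T}\|_{L^{r}_{x}}$ and $\|K_{S,2}\|_{L^{r}_{x}}$ (as in Theorem~\ref{main3}) converts it into $\|\kappa(t,\cdot)\|_{L^{4}_{\gamma}}^{2}=A(t)^{2}$ on the $K_{T}$ branch; the square root in the $K_{S,2}$ bound, followed by the time integration, turns the same angular estimate into $(\int_{0}^{t}A(s)^{8}\,ds)^{1/2}$. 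These are exactly the two quantities whose integrability over $[0,T)$ is assumed, and that is what will let the bootstrap close.

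The time dependence of $Q(t)$ is handled by the fine partition $[0,t)=\bigcup_{i}[T_{i},T_{i+1})$ satisfying \eqref{condition1}--\eqref{condition2}: on each subinterval $n_{3}$ turns by less than $\tfrac14 P(t)^{-1}$, so for a momentum of size $<P(t)$ the vector $\mathbb{P}_{Q(s)}p$ differs from $\mathbb{P}_{Q(T_{i})}p$ by at most $\tfrac14$; hence on each piece the support may be regarded as confined to the fixed slab attached to $Q(T_{i})$, at the cost of $O(1)$ additive constants, and the estimates above apply with the plane frozen. I would then run the Strichartz iteration for $K_{S,1}\lesssim\square^{-1}(|K|\Phi_{-1})$ on each subinterval --- using conservation of the $L^{2}$ flux of $K_{g}$ on the backward light cone, which is precisely why one works with the refined $K_{S,1}/K_{S,2}$ splitting rather than with $K_{S}$ --- and sum over the finitely many subintervals covering $[0,T)$ (finitely many because $n_{3}$ is uniformly continuous). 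This delivers $\|K\|_{L^{\infty}_{t}([0,T);L^{4+\delta}_{x})}$ bounded by a power of $P^{*}(T)$ strictly below threshold, up to logarithms.

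Finally I would invoke the characteristic estimate of Pallard \cite{refinedPallard} exactly as in the proof of Theorem~\ref{main3}, namely $P(T)\lesssim 1+\|\sigma_{-1}\|_{L^{\infty}_{t}L^{3+}_{x}}+\|\sigma_{-1}|K|\ln^{1/3}(1+P(t))\|_{L^{1}_{t}L^{3/2}_{x}}$, substitute the bounds on $\sigma_{-1}$, $\Phi_{-1}$ and $\|K\|_{L^{4+\delta}_{x}}$, and use interpolation together with the assumed integrability of $A(t)^{2}$ and $(\int_{0}^{t}A(s)^{8}\,ds)^{1/2}$ to collapse the right-hand side into $1+P^{*}(T)^{\gamma}\ln^{\lambda}(P^{*}(T))$. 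The step I expect to be the real obstacle is the field estimate for a \emph{moving} plane: showing that the singular momentum integrals defining $\sigma_{-1}$ and $\Phi_{-1}$ lose only polylogarithmically in $P^{*}$ and polynomially (through the $L^{4}_{\gamma}$ norm) in $\kappa(t,\cdot)$, uniformly as $n_{3}(t)$ rotates --- which needs a careful splitting of the $\omega$-sphere near the degenerate directions $\pm n_{3}(t)$ combined with the $x$-averaged gain from energy conservation --- and the self-consistency bookkeeping that keeps the $P(t)$-dependent partition in \eqref{condition2} compatible with the very bound on $P^{*}$ one is proving.
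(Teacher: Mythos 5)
Your reduction to a bound on $P(T)$ and your treatment of the moving plane via the partition \eqref{condition1}--\eqref{condition2} (freezing $n_{3}$ at $n_{3}(T_{i})$ on each subinterval at the cost of an angular error of size $\tfrac14 P(t)^{-1}$) are exactly what the paper does. But the core of your field estimate takes a route that does not work here. You propose to funnel the plane-confinement hypothesis through the quantities $\sigma_{-1}$ and $\Phi_{-1}$ and then reuse the machinery of Theorem \ref{main3}: the $W_{2}$ averaging estimates in $L^{4+\delta}_{x}$, the Strichartz iteration for $K_{S,1}\lesssim\square^{-1}(|K|\Phi_{-1})$, and Pallard's characteristic bound. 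The obstruction is that $\sigma_{-1}$ and $\Phi_{-1}$ are \emph{suprema over $\omega$} of the momentum integrals, and the supremum is attained at $\omega\approx\mp n_{3}(t)$, precisely the degenerate directions where plane confinement gives nothing: there $\int f\,dp/(p_{0}(1+\hat p\cdot\omega))$ can be of order $P^{2}\log P$ and $\int f\,dp/(p_{0}(1+\hat p\cdot\omega)^{1/2})$ grows linearly in $P$, pointwise in $x$ on a spatial set of measure $\approx 1$. Hence $\|\sigma_{-1}\|_{L^{\infty}_{t}L^{3+}_{x}}$ can be of order $P^{2}\log P$, and the hypothesis $\|\Phi_{-1}\|_{L^{\infty}_{t}L^{2}_{x}}\lesssim1$ needed to run your $K_{S,1}$ iteration fails outright; feeding these into Pallard's inequality gives $P\lesssim 1+P^{2}\log P+\cdots$, which does not close. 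The entire content of the hypothesis is an \emph{angular} gain --- the momentum integral is small except for $\omega$ in a small neighborhood of $\pm n_{3}(t)$ --- and taking the sup over $\omega$ throws that gain away before it can be used.

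The paper therefore never forms $\sigma_{-1}$ in this argument. It keeps the $\omega$-dependence inside the cone integrals, proves the anisotropic bound of Proposition \ref{maintool}, namely $\int f\,dp/(p_{0}(1+\hat p\cdot\omega^{(i)}))\lesssim\min\{P^{2}\log P,\ A^{4}\log P\cdot(\angle(n_{3},\pm\omega^{(i)}))^{-2}\}$, and then integrates the $(\angle)^{-2}$ singularity against $\sin\theta^{(i)}\,d\theta^{(i)}$ over the sphere, paying only logarithms, separately in the three angular regions $[0,P^{-1}]$, $[P^{-1},\pi-P^{-1}]$, $[\pi-P^{-1},\pi]$. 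It also does not use your sublinear absorption $P^{*}\lesssim1+(P^{*})^{\gamma}\ln^{\lambda}P^{*}$: the resulting bounds are \emph{linear} in $P$ up to logarithms (e.g.\ $\int_{0}^{t}(|E_{S,1}|+|B_{S,1}|)\,ds\lesssim\sqrt{\log P(t)}\int_{0}^{t}A(s)^{2}P(s)\,ds$ and $|E_{S,2}|+|B_{S,2}|\lesssim P\log P\,(1+(\int_{0}^{t}A^{8})^{1/2})$), and it is the assumed time-integrability of $A(t)^{2}$ and $(\int_{0}^{t}A^{8})^{1/2}$ combined with the Gronwall-type bootstrap of Proposition 6.1 in \cite{L-S} that closes the argument, not an absorption of a sublinear power. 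To repair your proposal you would have to abandon the $\sigma_{-1}/\Phi_{-1}$ reduction and work with the $\omega$-resolved momentum integrals, which is essentially the paper's proof.
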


Note that $\gamma$ depends on $p\in\mathbb{R}^{3}$, so we actually have $\tan(\gamma) = \tan(\gamma(p)) =  \frac{p\cdot n_{2}(t)}{p\cdot n_{1}(t)}$.






\subsubsection{Outline of Proof}

We modify methods used in \cite{L-S} to prove Theorem \ref{main2}. We wish to show that the quantity $$P(t) = 2 + \text{sup}\{|p| : f(s,x,p)\neq 0 \text{ for some } 0\leq s \leq t \text{ and } x\in\mathbb{R}^{3}\}$$ is bounded on $[0,T)$. By the method of characteristics (see \cite{L-S}), we have the bound 

$$P(t)\lesssim 1+\sup\limits_{(t,x,p)\in\mathbb{R}\times\mathbb{R}^{3}\times\mathbb{R}^{3}}\int_{0}^{t}{|E(s;X(s;t,x,p))|+|B(s;X(s;t,x,p))|ds}$$. 

We wish to bound the momentum support quantity $P(t)$. To do so, we first find appropriate estimates on $E$ and $B$. We again use the decomposition:

\begin{align*} 
4\pi E(x,t) &= (E)_{0} + E_{S,1} + E_{S,2} + E_{T}  \\ 
4\pi B(x,t) &= (B)_{0} + B_{S,1} + E_{S,2} + B_{T}
\end{align*}

where $(E)_{0}$ and $(B)_{0}$ depend only on the initial data. We have the following estimates from Proposition 3.1 and Proposition 3.4 in \cite{L-S}:

\bea\label{ET0}
|E_{T}(t,x)| + |B_{T}(t,x)|  &\lesssim \int_{C_{t,x}}{\int_{\mathbb{R}^{3}}{\frac{f(s,x+(t-s)\omega,p)}{(t-s)^{2}p_{0}^{2}(1+\hat{p}\cdot\omega)^{\frac{3}{2}}}dp \ d\omega}}
\eea
\bea\label{ES1}
|E_{S,1}(t,x)| + |B_{S,1}(t,x)|  &\lesssim \int_{C_{t,x}}{\int_{\mathbb{R}^{3}}{|B|\frac{f(s,x+(t-s)\omega,p)}{(t-s)p_{0}(1+\hat{p}\cdot\omega)^{\frac{1}{2}}}dp \ d\omega}} 
\eea
\bea\label{ES2}
|E_{S,2}(t,x)| + |B_{S,2}(t,x)|  &\lesssim \int_{C_{t,x}}{\int_{\mathbb{R}^{3}}{\frac{(|E\cdot\omega|+|B\cdot\omega|+|B+\omega\times E|)f(s,x+(t-s)\omega,p)}{(t-s)p_{0}(1+\hat{p}\cdot\omega)} dp \ d\omega}} 
\eea

Next, we prove analogous bounds on the momentum integral $\int_{\mathbb{R}^{3}}{\frac{f(s,x+(t-s)\omega,p)}{(t-s)^{2}p_{0}^{2}(1+\hat{p}\cdot\omega)^{\frac{3}{2}}}dp}$ as those found in \cite{L-S}. Partitioning the time interval $[0,T)$ into subintervals $[T_{i}, T_{i+1}]$ small enough as described in (\ref{condition1}) and (\ref{condition2}). Applying these two conditions to bound (\ref{ET0}), (\ref{ES1}) and (\ref{ES2}) on each subinterval $[T_{i}, T_{i+1}]$ in an analogous method to \cite{L-S}. (These conditions allow us to approximate the integrals in each time subinterval by the integral at the endpoints, as the variation in the momentum support plane is very small on each subinterval. This observation is the key to proving Theorem \ref{main2}.) We then sum over the partition to prove analogous bounds on the field terms to those found in \cite{L-S}. From here, we conclude that $P(T)\lesssim 1$ by the bootstrap argument of \cite{L-S}.

\section{The operator $W_{\alpha}$}

The average value of a function $g : X\rightarrow \mathbb{R}$ defined on a space $X$ with finite measure $\nu(X)$ is denoted by
\bea
\fint_{X}{g(x)dx} = \frac{1}{\nu(X)}\int_{X}{g(x) dx}
\eea

Define the operator $W_{\alpha}$ by 
\bea\label{operator0}
W_{\alpha}(h(t,x)) = \int_{|x-y|\leq t}{\frac{h(t-|x-y|,y)}{|x-y|^{\alpha}}dy} = \int_{0}^{t}{s^{2-\alpha}\fint_{\mathbb{S}^{2}}{h(t-s,x+s\omega)d\mu(\omega)}ds}
\eea
where $d\mu(\omega)$ is the spherical measure. Then, by (\ref{KT}) we obtain the following:

\begin{prop}\label{FieldEst}
For the electric and magnetic fields, we have the estimate:
\begin{align*} 
|K_{T}(t,x)|  &\lesssim W_{2}(\sigma_{-1})
\end{align*}

\end{prop}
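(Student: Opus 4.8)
The plan is to read off the proposition directly from the pointwise bound \eqref{KT}, recognizing its right-hand side as the operator $W_2$ applied to $\sigma_{-1}$; no new analysis is required beyond a change of variables and the polar-coordinate form of $W_\alpha$.

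First I would recall that \eqref{KT}, obtained as in Propositions 3.1--3.2 of \cite{L-S} from the Glassey--Strauss formula \eqref{ET} for $E_T$ (and its analogue for $B_T$) by using $1-|\hat p|^2 = p_0^{-2}$, $|\omega+\hat p|\le 2$, and the definition of $\sigma_{-1}$, states
$$|K_T(t,x)| \lesssim \int_{|y|\le t}\frac{\sigma_{-1}(t-|y|,x+y)}{|y|^2}\,dy.$$
On the other hand, specializing \eqref{operator0} to $\alpha=2$ and performing the substitution $y\mapsto x+y$ gives
$$W_2(\sigma_{-1})(t,x)=\int_{|x-y|\le t}\frac{\sigma_{-1}(t-|x-y|,y)}{|x-y|^2}\,dy=\int_{|y|\le t}\frac{\sigma_{-1}(t-|y|,x+y)}{|y|^2}\,dy,$$
which is exactly the right-hand side above. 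Hence $|K_T|\lesssim W_2(\sigma_{-1})$, which is the claim.

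For completeness I would also record the second equality in \eqref{operator0}: writing $y=x+s\omega$ with $s=|x-y|\in[0,t]$ and $\omega\in\mathbb{S}^2$, so that $dy=s^2\,ds\,d\mu(\omega)$, one gets $\int_{|x-y|\le t}\frac{h(t-|x-y|,y)}{|x-y|^\alpha}\,dy=\int_0^t s^{2-\alpha}\int_{\mathbb{S}^2}h(t-s,x+s\omega)\,d\mu(\omega)\,ds=4\pi\int_0^t s^{2-\alpha}\fint_{\mathbb{S}^2}h(t-s,x+s\omega)\,d\mu(\omega)\,ds$. There is essentially no obstacle here: the content is entirely contained in \eqref{KT}, which is quoted from \cite{L-S}, and the proposition is just its translation into the $W_\alpha$ notation. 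The only points that deserve a line of care are the harmless constant $4\pi$ relating $\fint_{\mathbb{S}^2}$ to $\int_{\mathbb{S}^2}$ and the elementary polar identity just displayed, neither of which affects the statement since it is phrased up to $\lesssim$.
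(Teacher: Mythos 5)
Your proposal is correct and matches the paper's own (essentially one-line) argument: the paper simply observes that the right-hand side of \eqref{KT} is, after the translation $y\mapsto x+y$, precisely $W_2(\sigma_{-1})$ as defined in \eqref{operator0}. Your extra remark about the harmless factor of $4\pi$ between $\int_{\mathbb{S}^2}$ and $\fint_{\mathbb{S}^2}$ in \eqref{operator0} is a fair observation but does not affect the statement, which is phrased up to an inessential constant.
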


Thus, we wish to obtain estimates for the operator $W_{\alpha}$ for $\alpha = 2$. We prove an estimate for general $\alpha$. Consider:
\bea\label{operator1}
(T_{\alpha,s}h)(t,sx) = s^{2-\alpha}\fint_{\mathbb{S}^{2}}{h(t-s,sx+s\omega)d\mu(\omega)}
\eea

A Schwartz function is a $C^{\infty}$ function $f:\mathbb{R}^{n} \rightarrow \mathbb{R}$ such that for any pair of multi-indices, $\alpha$ and $\beta$, there exists a finite constant $C_{\alpha,\beta}$ satisfying $\sup\limits_{x\in\mathbb{R}^{n}}|x^{\alpha}\partial^{\beta}f(x)| \leq C_{\alpha,\beta}$. The set of Schwartz functions form a vector space called the Schwartz space, which is dense in the space $L^{q}$ for $1\leq q < \infty$. On the Schwartz space, denoted by $\mathscr{S}(\mathbb{R}^{n})$, we have known estimates for the averaging operator $Af = \fint_{\mathbb{S}^{2}}{f(x+\omega)d\mu(\omega)}$ from (2) in \cite{averaging}:

\begin{thm}
The estimate
\bea\label{average}
\|Af \|_{L^{a}} \lesssim \|f\|_{L^{q}}, \ f\in\mathscr{S}(\mathbb{R}^{n})
\eea
holds if and only if $(\frac{1}{q}, \frac{1}{a})$ in the convex hull of $(0,0)$, $(1,1)$, and $(\frac{n}{n+1}, \frac{1}{n+1})$.
\end{thm}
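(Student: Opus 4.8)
The operator $A$ is convolution with the normalized surface measure $\sigma$ on $\mathbb{S}^{n-1}\subset\mathbb{R}^n$, a probability measure, so $Af=f\ast\sigma$. The plan is to verify \eqref{average} at the three vertices of the triangle and then fill in the whole closed triangle by Riesz--Thorin interpolation; the converse direction comes from scaling counterexamples that exclude everything outside the triangle. The two elementary vertices are immediate: at $(1/q,1/a)=(0,0)$, since $\sigma$ is a probability measure, $|Af(x)|\le\fint_{\mathbb{S}^{n-1}}|f(x+\omega)|\,d\mu(\omega)\le\|f\|_{L^\infty}$, so $A:L^\infty\to L^\infty$; at $(1,1)$, Minkowski's integral inequality gives $\|Af\|_{L^1}\le\fint_{\mathbb{S}^{n-1}}\|f(\cdot+\omega)\|_{L^1}\,d\mu(\omega)=\|f\|_{L^1}$, so $A:L^1\to L^1$.

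The only substantive point is the curved vertex $(\tfrac{n}{n+1},\tfrac{1}{n+1})$, where the curvature of the sphere enters. I would embed $\sigma$ into Stein's analytic family of Riesz-type measures $\mu^z$, defined for $\Re z>0$ by $\langle\mu^z,\varphi\rangle=\tfrac{1}{\Gamma(z)}\int_{|y|<1}(1-|y|^2)^{z-1}\varphi(y)\,dy$ and continued to an entire family of tempered distributions, with $\mu^0$ a constant multiple of $\sigma$. Two estimates drive the interpolation: on the vertical line $\Re z=1$ the density of $\mu^z$ is bounded, with only polynomial growth in $|\Im z|$, so convolution with $\mu^z$ maps $L^1\to L^\infty$ there; and on the line $\Re z=-\tfrac{n-1}{2}$ one has $|\widehat{\mu^z}(\xi)|\lesssim(1+|\xi|)^{-\frac{n-1}{2}-\Re z}\lesssim 1$ by Bessel-function asymptotics (equivalently, the sharp $|\xi|^{-(n-1)/2}$ decay of $\widehat{\sigma}$), so convolution with $\mu^z$ maps $L^2\to L^2$ there. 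Since $z=0$ lies on the segment between these lines with interpolation parameter $\theta=\tfrac{n-1}{n+1}$ (chosen so that $(1-\theta)(-\tfrac{n-1}{2})+\theta=0$), Stein's complex interpolation theorem gives $A:L^{(n+1)/n}\to L^{n+1}$; one checks $\tfrac{1}{q}=\tfrac{1-\theta}{2}+\theta=\tfrac{n}{n+1}$ and $\tfrac{1}{a}=\tfrac{1-\theta}{2}=\tfrac{1}{n+1}$. Riesz--Thorin between the three vertices then yields \eqref{average} on the entire closed triangle.

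For the converse I would exhibit three one-parameter families of test functions. Testing on $f=\mathbf{1}_{B_R}$ with $R\to\infty$, where $Af\ge\mathbf{1}_{B_{R-1}}$, gives $\|Af\|_{L^a}\gtrsim R^{n/a}$ against $\|f\|_{L^q}\sim R^{n/q}$, forcing $\tfrac{1}{a}\le\tfrac{1}{q}$. Testing on $f=\mathbf{1}_{B_\delta}$ with $\delta\to 0$: for $x$ in the $\delta$-annulus $\{\,x:\,|\,|x|-1\,|\lesssim\delta\,\}$, of volume $\sim\delta$, the translate $x+\mathbb{S}^{n-1}$ meets $B_\delta$ in a cap of surface measure $\sim\delta^{n-1}$, so $Af(x)\sim\delta^{n-1}$ there and $\|Af\|_{L^a}\gtrsim\delta^{\,n-1+1/a}$ against $\|f\|_{L^q}\sim\delta^{n/q}$; letting $\delta\to 0$ forces $\tfrac{1}{a}\ge n(\tfrac{1}{q}-1)+1$, which is exactly the edge through $(1,1)$ and $(\tfrac{n}{n+1},\tfrac{1}{n+1})$. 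Finally, since $\sigma$ is symmetric, $A$ is self-adjoint, so the admissible set is invariant under the duality involution $(1/q,1/a)\mapsto(1-1/a,\,1-1/q)$; the dual of the previous example, $f=\mathbf{1}_{\{\,|\,|x|-1\,|<\delta\,\}}$, supplies the remaining edge from $(0,0)$ to $(\tfrac{n}{n+1},\tfrac{1}{n+1})$. Together these three constraints cut out precisely the stated triangle.

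The only real obstacle is the curved vertex: one must have the sharp stationary-phase decay $|\widehat{\sigma}(\xi)|\lesssim(1+|\xi|)^{-(n-1)/2}$ and must track the polynomial-in-$\Im z$ growth along the two interpolation lines carefully enough for Stein's theorem to apply. Everything else — the elementary vertices, Riesz--Thorin, and all of the counterexamples — is soft. In the present paper this estimate is simply quoted from \cite{averaging}.
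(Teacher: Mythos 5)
Your argument is correct in substance, but note that the paper does not prove this theorem at all: it is quoted verbatim as estimate (2) of Iosevich--Sawyer \cite{averaging}, and the only thing the paper actually uses is the $n=3$ case of the sufficiency direction (the convex hull of $(0,0)$, $(1,1)$, $(\frac34,\frac14)$), which it then feeds into the scaling argument for $W_\alpha$. What you have written is the classical Littman--Strichartz proof of the spherical averaging estimate, and it is the standard route: the two trivial vertices, the curved vertex $L^{(n+1)/n}\to L^{n+1}$ via Stein's analytic family $\mu^z$ interpolated between the bounded-density line $\Re z=1$ and the bounded-Fourier-transform line $\Re z=-\frac{n-1}{2}$, Riesz--Thorin to fill the triangle, and the three scaling families ($\mathbf{1}_{B_R}$, $\mathbf{1}_{B_\delta}$, and its dual annulus) for necessity. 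Your exponent bookkeeping at the curved vertex and in all three counterexamples checks out, and the three necessary conditions $\frac1a\le\frac1q$, $\frac1a\ge n(\frac1q-1)+1$, $\frac1a\ge\frac{1}{nq}$ do cut out exactly the stated triangle.

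One technical inaccuracy worth fixing: on the line $\Re z=1$ the constant in the $L^1\to L^\infty$ bound is \emph{not} polynomial in $|\Im z|$, because $1/|\Gamma(1+it)|\sim (2\pi|t|)^{-1/2}e^{\pi|t|/2}$ grows exponentially; the same is true of the Gamma factors appearing in the Bessel asymptotics on $\Re z=-\frac{n-1}{2}$. This is harmless --- Stein's interpolation theorem for analytic families only requires growth of admissible order $O(e^{ae^{b|\Im z|}})$ with $b<\pi$, or one can normalize the family by $e^{z^2}/\Gamma(z)$ to make the constants genuinely bounded --- but as written the claim of polynomial growth is false and should either be corrected or replaced by the observation that exponential growth is admissible.
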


For the case $n=3$, the inequality (\ref{average}) holds for $(\frac{1}{q}, \frac{1}{a})$ in the convex hull of $\{ (0,0), (1,1), (\frac{3}{4}, \frac{1}{4})\}$. Thus, setting $a=mq$ for $1\leq m \leq 3$ and for a range of $q$ to be calculated, we have that:
\bea\label{operator2}
\|(T_{\alpha,s}h)(t,sx)\|_{L^{mq}_{x}} \lesssim s^{2-\alpha} \|h(t-s,sx)\|_{L^{q}_{x}}
\eea

After a change of variables in the spatial coordinates,

$$\|T_{\alpha,s}h(t)\|_{L^{mq}_{x}} \lesssim s^{2-\alpha+\frac{3}{mq}-\frac{3}{q}} \|h(t-s)\|_{L^{q}_{x}}$$

Applying this estimate to the operator $W_{\alpha}$ under the additional assumption $2-\alpha+\frac{3}{mq}-\frac{3}{q} > -1$,
\begin{equation*}
\begin{split}
\|W_{\alpha}h(t)\|_{L^{r}_{t}L^{mq}_{x}} & \leq \left\|\int_{0}^{t}{\|T_{\alpha,s}h(t)\|_{L^{mq}_{x}} ds}\right\|_{L^{r}_{t}} \\
 & \lesssim \left\|\int_{0}^{t}{s^{2-\alpha+\frac{3}{mq}-\frac{3}{q}}\|h(t-s)\|_{L^{q}_{x}} ds}\right\|_{L^{r}_{t}} \\
 & \lesssim \left\|\int_{0}^{t}{s^{2-\alpha+\frac{3}{mq}-\frac{3}{q}}\|h\|_{L^{\infty}_{t}L^{q}_{x}} ds}\right\|_{L^{r}_{t}} \\
 & \lesssim \|h\|_{L^{\infty}_{t}L^{q}_{x}}
\end{split}
\end{equation*}
where $1\leq r \leq \infty$, $t\in [0,T]$, $x\in \mathbb{R}^{3}$, and the implicit constant in the upper bound is a continuous function of $T$, $r$ and $\alpha$. It remains to check the range of $q$ for which $(\frac{1}{q},\frac{1}{mq})$ lies in the convex hull described above.

We observe that the line connecting the points $(x,y)= (1,1)$ and $(x,y) = (\frac{3}{4},\frac{1}{4})$ is represented by the equation $y=3x -2$. Thus, the line $y=\frac{1}{m}x$ meets the line $y=3x-2$ when $x=\frac{2m}{3m-1}$.
\\
\\
Summarizing:

\begin{lemma}
For $1\leq r \leq \infty$, $1\leq m \leq 3$, $2-\alpha+\frac{3}{mq}-\frac{3}{q} > -1$, $\frac{3m-1}{2m} \leq q \leq \infty$,
\bea\label{OpEst}
\|W_{\alpha}h(t,x)\|_{L^{r}_{t}([0,T];L^{mq}_{x})}\leq C_{T,\alpha} \|h\|_{L^{\infty}_{t}([0,T];L^{q}_{x})}
\eea
for some explicitly computable constant $C_{T,\alpha}$ depending only on $T$ and $\alpha$.
\end{lemma}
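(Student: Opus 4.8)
The plan is to assemble the Lemma directly from the averaging-operator estimate \eqref{average} together with the scaling computation already carried out in the text; essentially the statement is a bookkeeping consolidation of what precedes it, so the proof is short and the main work is tracking the admissible exponent ranges.

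First I would fix $1\leq m\leq 3$ and choose $q$ in the range $\frac{3m-1}{2m}\leq q\leq\infty$, so that the pair $(\frac1q,\frac1{mq})$ lies on the segment $y=\tfrac1m x$ between the origin and the intersection point $\big(\tfrac{2m}{3m-1},\tfrac{2}{3m-1}\big)$ with the line $y=3x-2$; by the convexity discussion immediately above the Lemma this pair lies in the convex hull of $\{(0,0),(1,1),(\tfrac34,\tfrac14)\}$, hence the averaging estimate \eqref{average} applies with $a=mq$. Applying \eqref{average} to $(T_{\alpha,s}h)(t,s\,\cdot)$ as in \eqref{operator1}--\eqref{operator2} gives $\|(T_{\alpha,s}h)(t,s\cdot)\|_{L^{mq}_x}\lesssim s^{2-\alpha}\|h(t-s,s\cdot)\|_{L^q_x}$, and then the change of variables $x\mapsto sx$ in both $L^{mq}_x$ and $L^q_x$ (which costs $s^{-3/(mq)}$ on the left and $s^{-3/q}$ on the right, hence a net factor $s^{3/(mq)-3/q}$) yields $\|T_{\alpha,s}h(t)\|_{L^{mq}_x}\lesssim s^{\,2-\alpha+\frac{3}{mq}-\frac3q}\|h(t-s)\|_{L^q_x}$.

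Next I would insert this into the definition \eqref{operator0} of $W_\alpha$, bound $\|h(t-s)\|_{L^q_x}$ by $\|h\|_{L^\infty_t([0,T];L^q_x)}$, and note that under the hypothesis $2-\alpha+\frac{3}{mq}-\frac3q>-1$ the function $s\mapsto s^{\,2-\alpha+\frac{3}{mq}-\frac3q}$ is integrable on $[0,t]\subset[0,T]$, so $\int_0^t s^{\,2-\alpha+\frac{3}{mq}-\frac3q}\,ds\leq C_{T,\alpha}$ uniformly in $t\in[0,T]$. This gives the pointwise-in-$t$ bound $\|W_\alpha h(t)\|_{L^{mq}_x}\leq C_{T,\alpha}\|h\|_{L^\infty_t([0,T];L^q_x)}$, and since the right side is independent of $t$, taking $L^r_t([0,T])$ for any $1\leq r\leq\infty$ costs only the finite factor $T^{1/r}$ (interpreted as $1$ when $r=\infty$), which can be absorbed into $C_{T,\alpha}$. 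This is exactly \eqref{OpEst}, with $C_{T,\alpha}$ depending only on $T$ and $\alpha$ (and harmlessly on $r$) through the elementary integral and the $T^{1/r}$ factor.

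The only genuine subtlety — and the one place I would be careful — is the interplay of the two constraints on $q$: the geometric constraint $\frac{3m-1}{2m}\leq q$ needed for \eqref{average}, and the integrability constraint $2-\alpha+\frac{3}{mq}-\frac3q>-1$ needed for the $s$-integral to converge at $s=0$. For the intended application $\alpha=2$ the integrability condition reads $\frac{3}{mq}-\frac3q>-1$, i.e. $\frac3q(1-\tfrac1m)<1$, which is automatic for $m\geq1$ when $q$ is not too small, so the two ranges are compatible; I would state the Lemma exactly as the conjunction of both conditions (as written) rather than trying to solve them simultaneously, and remark that for $\alpha=2$ both are satisfied in the parameter window used later, e.g. $m=2$, $q=2$. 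One should also record that the density and continuity of Schwartz functions in $L^q$ for $q<\infty$ (noted in the text before \eqref{average}) lets us extend the averaging estimate from $\mathscr S(\mathbb R^3)$ to all of $L^q_x$, which is what is actually used here; for $q=\infty$ the estimate $\|Af\|_{L^\infty}\leq\|f\|_{L^\infty}$ is trivial and consistent with the endpoint $(0,0)$ of the hull.
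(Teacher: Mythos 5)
Your proof is correct and follows essentially the same route as the paper: apply the Iosevich--Sawyer averaging estimate on the sphere for $(\tfrac1q,\tfrac1{mq})$ in the admissible convex hull, rescale to pick up the factor $s^{2-\alpha+\frac{3}{mq}-\frac3q}$, integrate in $s$ using the hypothesis $2-\alpha+\frac{3}{mq}-\frac3q>-1$, and absorb the $L^r_t$ norm into the constant. Your added remarks on the density of Schwartz functions and the (harmless) $r$-dependence of the constant are accurate refinements of what the paper leaves implicit.
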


\section{Estimates on $K_{T}$}

We can now apply the above estimates to the $K_{T}$ term. For the $\alpha = 2$, $m=3$ case, we need $-\frac{2}{q} > -1$ and $\frac{4}{3}\leq q \leq \infty$. Thus, by Proposition \ref{FieldEst} and (\ref{OpEst}), we obtain:

\begin{prop}
For $1\leq r \leq \infty$ and $q > 2$,
\bea\label{finalKT}
\|K_{T}\|_{L^{r}_{t}([0,T];L^{3q}_{x})}\leq C_{T} \|\sigma_{-1}\|_{L^{\infty}_{t}([0,T];L^{q}_{x})}
\eea
for some explicitly computable constant $C_{T}$ depending only on $T$.
\end{prop}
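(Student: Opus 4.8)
The plan is to obtain this Proposition as an immediate corollary of Proposition~\ref{FieldEst} together with the operator estimate~(\ref{OpEst}) of the preceding Lemma, specialized to the exponents $\alpha = 2$ and $m = 3$.

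First I would note that $\sigma_{-1} \ge 0$, so the kernel appearing in the definition~(\ref{operator0}) of $W_2$ is non-negative and $W_2(\sigma_{-1})(t,x) \ge 0$; hence the pointwise bound $|K_T(t,x)| \lesssim W_2(\sigma_{-1})(t,x)$ from Proposition~\ref{FieldEst} is preserved upon taking the $L^{3q}_x$ norm in space and then the $L^r_t([0,T])$ norm in time:
\[
\|K_T\|_{L^r_t([0,T];L^{3q}_x)} \lesssim \|W_2(\sigma_{-1})\|_{L^r_t([0,T];L^{3q}_x)}.
\]
Then I would invoke~(\ref{OpEst}) with $h = \sigma_{-1}$, $\alpha = 2$, and $m = 3$ — choosing $m$ as large as the averaging operator theorem permits, so that the output Lebesgue exponent $mq = 3q$ is as large as possible. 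With these choices the two hypotheses of the Lemma read: the integrability condition $2 - \alpha + \frac{3}{mq} - \frac{3}{q} = -\frac{2}{q} > -1$, i.e.\ $q > 2$; and $\frac{3m-1}{2m} = \frac{4}{3} \le q \le \infty$, which is automatic once $q > 2$. Feeding this into the previous display and relabeling the resulting constant $C_{T,2}$ (with $r$ and $\alpha$ now fixed) as $C_T$ yields~(\ref{finalKT}).

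The only point that has to be checked — a routine bookkeeping step rather than a genuine obstacle — is that the admissibility point $(\tfrac{1}{q},\tfrac{1}{3q})$ really lies in the convex hull of $\{(0,0),(1,1),(\tfrac{3}{4},\tfrac{1}{4})\}$ from the averaging-operator theorem for every $q > 2$. Here one uses that, for $m = 3$, the line $y = x/3$ passes exactly through the vertex $(\tfrac{3}{4},\tfrac{1}{4})$, so $(\tfrac{1}{q},\tfrac{1}{3q})$ is admissible precisely when $\tfrac{1}{q} \le \tfrac{3}{4}$, i.e.\ $q \ge \tfrac{4}{3}$; since this is subsumed by $q > 2$, the condition $q > 2$ — which is exactly the near-origin integrability of $s \mapsto s^{-2/q}$ in the $s$-integral defining $W_2$ — is the only binding restriction, and the claimed range is sharp for this argument.
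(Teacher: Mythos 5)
Your proposal is correct and follows essentially the same route as the paper: the pointwise bound $|K_T|\lesssim W_2(\sigma_{-1})$ from Proposition~\ref{FieldEst} combined with the operator estimate~(\ref{OpEst}) at $\alpha=2$, $m=3$, for which the integrability condition $-\tfrac{2}{q}>-1$ gives $q>2$ and the admissibility condition $q\ge\tfrac{4}{3}$ is subsumed. The extra check that $(\tfrac1q,\tfrac1{3q})$ lies on the line through the origin and the vertex $(\tfrac34,\tfrac14)$ is a nice explicit verification of what the paper leaves implicit, but it is the same argument.
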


To get appropriate bounds on $K_{T}$, we need to introduce some important interpolation inequalities.

\begin{lemma}\label{sigmainterpolation}
Let $1\leq r,s \leq \infty$ and suppose $\nu > 2\frac{r}{s} - 1$. Then 
\bea
\|\sigma_{-1}\|_{L^{r}_{x}} \lesssim \|p_{0}^{\nu}f\|_{L^{s}_{x}L^{1}_{p}}^{\frac{s}{r}}.
\eea
\end{lemma}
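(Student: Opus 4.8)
The plan is to reduce the statement to a pointwise-in-$x$ estimate and then integrate. Fix the time slice $t$ and suppress it; set $g(x):=\int_{\mathbb{R}^{3}}p_{0}^{\nu}f(x,p)\,dp$, so that $\|p_{0}^{\nu}f\|_{L^{s}_{x}L^{1}_{p}}=\|g\|_{L^{s}_{x}}$. I would prove the pointwise bound $\sigma_{-1}(x)\lesssim g(x)^{s/r}$, with an implicit constant depending only on $\|f\|_{L^{\infty}_{x,p}}$ (which is $\lesssim 1$ by \eqref{conserve2} and the subsequent interpolation remark), and then conclude from the elementary identity $\big\|\,|g|^{s/r}\,\big\|_{L^{r}_{x}}=\|g\|_{L^{s}_{x}}^{s/r}$ (valid since $(s/r)\cdot r=s$). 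It suffices to treat $1\le s\le r\le\infty$, the range used in the sequel; put $\theta:=s/r\in(0,1]$. When $\theta=1$ the pointwise bound is immediate: $1+\hat p\cdot\omega\ge 1-|\hat p|=\tfrac{1}{p_{0}(p_{0}+|p|)}\ge\tfrac{1}{2p_{0}^{2}}$ gives $\tfrac{1}{p_{0}(1+\hat p\cdot\omega)}\le 2p_{0}\le 2p_{0}^{\nu}$ since $\nu>\tfrac{2r}{s}-1=1$, whence $\sigma_{-1}(x)\le 2g(x)$. Assume $0<\theta<1$ below.

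For fixed $x$ and $|\omega|=1$, I would factor
\[
\frac{f(x,p)}{p_{0}(1+\hat p\cdot\omega)}=f(x,p)^{1-\theta}\cdot\big(p_{0}^{\nu\theta}f(x,p)^{\theta}\big)\cdot\big(p_{0}^{-\nu\theta-1}(1+\hat p\cdot\omega)^{-1}\big),
\]
bound $f^{1-\theta}\le\|f\|_{L^{\infty}_{x,p}}^{1-\theta}$, and apply Hölder in $p$ with the conjugate exponents $(1/\theta,1/(1-\theta))$ to the last two factors. This yields
\[
\int_{\mathbb{R}^{3}}\frac{f(x,p)}{p_{0}(1+\hat p\cdot\omega)}\,dp\;\le\;\|f\|_{L^{\infty}_{x,p}}^{1-\theta}\Big(\int_{\mathbb{R}^{3}}p_{0}^{\nu}f(x,p)\,dp\Big)^{\theta}\,J_{\omega}^{\,1-\theta},\qquad J_{\omega}:=\int_{\mathbb{R}^{3}}\frac{dp}{p_{0}^{\,c}\,(1+\hat p\cdot\omega)^{d}},
\]
with $c:=\tfrac{\nu\theta+1}{1-\theta}$ and $d:=\tfrac{1}{1-\theta}$. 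By rotation invariance of Lebesgue measure and of $p_{0}$, $J_{\omega}$ is independent of $\omega$; call it $J$. Taking the supremum over $\omega$ gives $\sigma_{-1}(x)\le C\,g(x)^{\theta}$ with $C=\|f\|_{L^{\infty}_{x,p}}^{1-\theta}J^{1-\theta}$, \emph{provided} $J<\infty$.

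Establishing $J<\infty$ under $\nu>\tfrac{2r}{s}-1$ is the crux. I would decompose $\mathbb{R}^{3}_{p}$ dyadically in $|p|\approx\rho$ and in $1+\hat p\cdot\omega\approx\delta$. Away from the glancing set $\{\hat p\approx-\omega\}$ one has $1+\hat p\cdot\omega\approx 1$ and the integrand is $\approx p_{0}^{-c}$, integrable at infinity exactly when $c>3$. Near the glancing set, writing $\psi$ for the angle between $p$ and $-\omega$, a short computation gives $1+\hat p\cdot\omega\approx\tfrac12 p_{0}^{-2}+\tfrac12\psi^{2}$; hence the block $\{1+\hat p\cdot\omega\approx\delta\}\cap\{|p|\approx\rho\}$ is nonempty only for $\rho\gtrsim\delta^{-1/2}$ and then has measure $\approx\rho^{3}\delta$, contributing $\approx\rho^{3-c}\delta^{1-d}$ to $J$. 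Summing over $\rho\gtrsim\delta^{-1/2}$ (a geometric sum, convergent to $\approx\delta^{(c-3)/2}$ precisely when $c>3$) and then over dyadic $\delta\le 1$ requires the exponent $1-d+\tfrac{c-3}{2}$ to be positive, i.e. $c>2d+1$. A direct check gives $c>2d+1\iff\nu\theta>2-\theta\iff\nu>\tfrac{2}{\theta}-1=\tfrac{2r}{s}-1$, which is exactly the hypothesis; the strictness is what makes the series converge, and $c>3$ follows since $\theta\le 1$.

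Combining the pointwise bound with the identity $\big\|\,|g|^{s/r}\,\big\|_{L^{r}_{x}}=\|g\|_{L^{s}_{x}}^{s/r}$ gives
\[
\|\sigma_{-1}\|_{L^{r}_{x}}\le C\Big(\int_{\mathbb{R}^{3}}g(x)^{(s/r)r}\,dx\Big)^{1/r}=C\,\|g\|_{L^{s}_{x}}^{s/r}=C\,\|p_{0}^{\nu}f\|_{L^{s}_{x}L^{1}_{p}}^{s/r},
\]
as claimed. The main obstacle is the glancing-region bookkeeping for $J$: getting the relativistic Glassey–Strauss geometry right — the crucial point being that $1+\hat p\cdot\omega$ stays bounded below by $\approx p_{0}^{-2}$ rather than vanishing — to produce the measure estimate $\rho^{3}\delta$, and then verifying that the resulting double dyadic sum converges under precisely $\nu>\tfrac{2r}{s}-1$. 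Everything else is routine Hölder and rearrangement.
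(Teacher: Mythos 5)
Your proof is correct and follows essentially the same route as the paper's: a Hölder split in $p$ with conjugate exponents $(r/s,\,(r/s)')$ that isolates $\big(\int p_0^{\nu}f\,dp\big)^{s/r}$ and a universal singular integral $J$, the use of $\|f\|_{L^\infty_{x,p}}\lesssim 1$ to trade $f^{r/s}$ for $f$, and the identity $\|g^{s/r}\|_{L^r_x}=\|g\|_{L^s_x}^{s/r}$; your threshold $c>2d+1$ is exactly the paper's condition $(\alpha-1)q'>1$. The only difference is cosmetic — you verify $J<\infty$ by a double dyadic decomposition using $1+\hat p\cdot\omega\approx p_0^{-2}+\psi^2$, whereas the paper uses the pointwise bound $(1+\hat p\cdot\omega)^{-1}\lesssim\min\{p_0^2,\theta^{-2}\}$ and computes directly in spherical coordinates — and your explicit restriction to $s\le r$ is harmless since the paper's own Hölder step implicitly requires it and that is the only range used.
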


\begin{proof}
By H\"older's inequality with $\frac{1}{q} + \frac{1}{q^{'}} = 1$:

$$\int_{\mathbb{R}^{3}}{\frac{f(t,x,p)}{p_{0}(1+\hat{p}\cdot\omega)}dp}\leq \bigg( \int_{\mathbb{R}^{3}}{\frac{dp}{p_{0}^{(1+\alpha)q^{'}}(1+\hat{p}\cdot\omega)^{q^{'}}}} \bigg)^{\frac{1}{q^{'}}}\bigg( \int_{\mathbb{R}^{3}}{p_{0}^{\alpha q}f(t,x,p)^{q} dp} \bigg)^{\frac{1}{q}}$$

Call the first term on the right hand side $I$. In order to bound this term, we use the standard inequality
\bea
(1+\hat{p}\cdot\omega)^{-1} \lesssim \text{min}\{p_{0}^{2},\theta^{-2}\}
\eea
where $\theta = \angle(\frac{p}{|p|},-\omega) \in [0,\pi]$. Note that for small $\theta$, we can assume $\sin(\theta) \approx \theta$. Assuming $\alpha > 2 - \frac{1}{q}$:

\begin{equation}\label{integralbound}
\begin{split}
I & = \int_{\mathbb{R}^{3}}{\frac{dp}{p_{0}^{(1+\alpha)q^{'}}(1+\hat{p}\cdot\omega)^{q^{'}}}} \\
 & \leq \int_{\mathbb{R}^{3}}{\frac{dp}{p_{0}^{(\alpha-1)q^{'}+2}(1+\hat{p}\cdot\omega)}} \\
 & \lesssim  \lim_{P\rightarrow \infty} {\int_{0}^{P}{d|p|}\int_{0}^{2\pi}{d\phi}\bigg( \int_{0}^{p_{0}^{-1}}{\frac{p_{0}^{2}|p|^{2} \sin(\theta) d\theta}{p_{0}^{(\alpha-1)q^{'}+2}}} + \int_{p_{0}^{-1}}^{\pi}{\frac{|p|^{2} \sin(\theta) d\theta}{p_{0}^{(\alpha-1)q^{'}+2}\theta^{2}}} \bigg)} \\
 & \lesssim \lim_{P\rightarrow \infty} {\int_{0}^{P}{d|p|{\frac{1+\log(p_{0})}{p_{0}^{(\alpha-1)q^{'}}}}}} \\
 &\lesssim 1
\end{split}
\end{equation}
since $(\alpha-1)q' > 1$. Taking $L^{r}$ norm on both sides and using the conservation law $\|f\|_{L^{\infty}_{x,p}} \lesssim 1$, we obtain:
\bea
\|\sigma_{-1}\|_{L^{r}_{x}} \lesssim \|p_{0}^{\alpha q}f^{q}\|_{L^{\frac{r}{q}}_{x}L^{1}_{p}}^{\frac{1}{q}} \lesssim \|p_{0}^{\alpha q}f\|_{L^{\frac{r}{q}}_{x}L^{1}_{p}}^{\frac{1}{q}}
\eea
Hence, setting $q = \frac{r}{s}$, we finally have for $\nu > \frac{2r}{s} -1$:
\bea
\|\sigma_{-1}\|_{L^{r}_{x}} \lesssim \|p_{0}^{\nu}f\|_{L^{s}_{x}L^{1}_{p}}^{\frac{s}{r}}
\eea
This completes the proof of this inequality.
\end{proof}
 
We have the following interpolation-type inequality from Proposition 10.3 in \cite{Luk-Strain}:

\begin{prop}
Suppose $\eta$, $\rho$, and $\tau$ are real numbers such that $0< q \eta < 1$ and

$$\tau \geq \frac{\rho -\eta(N+3-3q)}{1-q\eta}$$ Then,
\bea\label{interpolation}
\|fp_{0}^{\rho}\|_{L^{\infty}_{t}([0,T]; L^{q}_{x}L^{1}_{p})} \lesssim  \|fp_{0}^{\tau}\|_{L^{\infty}_{t}([0,T]; L^{q}_{x}L^{1}_{p})}^{1-q\eta}\|fp_{0}^{N}\|_{L^{\infty}_{t}([0,T]; L^{1}_{x}L^{1}_{p})}^{\eta}
\eea
\end{prop}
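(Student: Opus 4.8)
The plan is to prove this as a pure interpolation inequality: the only property of the Vlasov--Maxwell dynamics that enters is the uniform bound $\|f\|_{L^{\infty}_{x,p}}(t)\lesssim 1$ supplied by the conservation law \eqref{conserve2}. Since the $L^\infty$ in time passes through untouched, I would fix $t$ and set $F_s(x)\eqdef\int_{\mathbb R^3}f(t,x,p)\,p_0^s\,dp$, so that the claim reduces to the spatial estimate
$$\|F_\rho\|_{L^{q}_{x}}\lesssim\|F_\tau\|_{L^{q}_{x}}^{1-q\eta}\,\|F_N\|_{L^{1}_{x}}^{\eta}.$$
The strategy is two steps: first a pointwise-in-$x$ bound $F_\rho(x)\lesssim F_\tau(x)^{1-q\eta}F_N(x)^{\eta}$, then an application of H\"older's inequality in $x$.

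For the pointwise bound I would decompose momentum space dyadically: let $A_k\eqdef\{p\in\mathbb R^3:\ 2^k\le p_0<2^{k+1}\}$ for $k\ge 0$ and $m_k(x)\eqdef\int_{A_k}f(t,x,p)\,dp$, so that $F_s(x)\approx\sum_{k\ge 0}2^{ks}m_k(x)$ for every $s$, and, crucially, $m_k(x)\le\|f\|_{L^{\infty}_{x,p}}\,|A_k|\lesssim 2^{3k}$ by the conservation bound. On each scale I would use the exact algebraic identity
$$2^{k\rho}m_k=\big(2^{k\tau}m_k\big)^{1-q\eta}\big(2^{kN}m_k\big)^{\eta}\,m_k^{\eta(q-1)}\,2^{k[\rho-\tau(1-q\eta)-N\eta]},$$
valid because the three exponents $1-q\eta$, $\eta$, $\eta(q-1)$ on $m_k$ sum to $1$. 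Inserting $m_k^{\eta(q-1)}\lesssim 2^{3k\eta(q-1)}$, the remaining power of $2^k$ collapses to $2^{-k\delta_0}$ with $\delta_0\eqdef\tau(1-q\eta)+\eta(N+3-3q)-\rho$, and this is genuine geometric decay precisely because the hypothesis $\tau\ge\tfrac{\rho-\eta(N+3-3q)}{1-q\eta}$ is exactly $\delta_0\ge 0$. Summing over $k$ and applying discrete H\"older twice --- once with the conjugate pair $\big(\tfrac1{1-q\eta},\tfrac1{q\eta}\big)$, admissible exactly because $0<q\eta<1$, to split off $\sum_k 2^{k\tau}m_k\approx F_\tau(x)$, and once more on the remaining $N$-moment sum against the geometric weight $2^{-k\delta_0}$ to bound it by $F_N(x)^{\eta}$ --- yields $F_\rho(x)\lesssim\|f\|_{L^{\infty}_{x,p}}^{\eta(q-1)}F_\tau(x)^{1-q\eta}F_N(x)^{\eta}\lesssim F_\tau(x)^{1-q\eta}F_N(x)^{\eta}$.

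The final step is routine: raising to the power $q$ and integrating in $x$ gives $\|F_\rho\|_{L^{q}_{x}}^q\lesssim\int_{\mathbb R^3}F_\tau(x)^{q(1-q\eta)}F_N(x)^{q\eta}\,dx$, and H\"older in $x$ with the conjugate exponents $\tfrac1{1-q\eta}$ and $\tfrac1{q\eta}$ bounds this by $\big(\int F_\tau^q\big)^{1-q\eta}\big(\int F_N\big)^{q\eta}=\|F_\tau\|_{L^{q}_{x}}^{q(1-q\eta)}\|F_N\|_{L^{1}_{x}}^{q\eta}$; taking the $q$-th root finishes the argument.

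The main obstacle --- and the reason the exponent condition has exactly this shape --- is the mismatch between the $L^{q}_{x}$ integrability of the $\rho$- and $\tau$-moments and the $L^{1}_{x}$ integrability of the $N$-moment: the three exponents on the right-hand side sum to $1-\eta(q-1)<1$, not to $1$, so a bare interpolation cannot succeed and the deficit must be paid for by $\|f\|_{L^{\infty}_{x,p}}$. Quantifying that payment scale by scale costs a momentum-volume factor $2^{3k}$ per dyadic annulus, and it is precisely this factor, carried with weight $\eta(q-1)$, that generates the $+3-3q$ correction inside $\tfrac{\rho-\eta(N+3-3q)}{1-q\eta}$ ($3$ being the dimension of $\mathbb R^3_p$). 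One point still requiring care is the borderline case $\delta_0=0$, where there is no geometric decay in the $k$-sum and, for $q>1$, the summation of the $N$-moment terms is delicate; I would handle it separately, e.g.\ by a limiting argument in $N$ or by using that for a classical solution $f$ decays fast enough in $p$ that only finitely many scales contribute. The interior case $\delta_0>0$ is the substantive one.
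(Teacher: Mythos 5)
The paper does not actually prove this proposition --- it is quoted from Proposition 10.3 of \cite{Luk-Strain} --- so your argument is a self-contained substitute rather than a variant of an in-paper proof. Where it applies, it is correct: the exponent identity on $m_k$ is right (the three exponents on $m_k$ do sum to $1$), the payment $m_k^{\eta(q-1)}\lesssim\|f\|_{L^\infty_{x,p}}^{\eta(q-1)}2^{3k\eta(q-1)}$ is exactly the mechanism that converts the $N$-moment from $L^q_x$ to $L^1_x$ and is legitimately financed by the conservation law \eqref{conserve2}, your identification of $\delta_0\ge 0$ with the hypothesis on $\tau$ is correct, and the two discrete H\"older applications plus the final spatial H\"older close the argument. (You are implicitly assuming $q\ge 1$ and $\eta>0$, e.g.\ in discarding $m_k^{\eta(q-1)}$ upward and in taking the conjugate exponent $q'$; that is the intended range of the statement.)

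The one genuine soft spot is the endpoint $\delta_0=0$ with $q>1$, where your second discrete H\"older produces the divergent factor $\bigl(\sum_k 1\bigr)^{1/q'}$. Neither of your proposed repairs works as stated: a ``limiting argument in $N$'' moves both $\|fp_0^{N}\|_{L^1_xL^1_p}$ and the admissible range of $\tau$, and rapid momentum decay of $f$ is precisely what is unavailable in the non-compact-support setting where this proposition is used. That said, the gap is immaterial for this paper: the only invocation of \eqref{interpolation} (in the chain leading to \eqref{KTmoment}) takes $q=1$, where your second H\"older degenerates to an $\ell^1$--$\ell^\infty$ pairing and $\sup_{k\ge 0}2^{-k\delta_0/\eta}\le 1$ already for $\delta_0\ge 0$, so the endpoint is covered; and for $q>1$ the applications always carry a $\delta$-sliver of slack, so one may as well assume $\delta_0>0$. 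If you want the proposition exactly as stated, either record strict inequality in the hypothesis on $\tau$ or replace the dyadic sum at the endpoint by the standard split of momentum space at a height $R(x)$ chosen to balance the two pieces, which avoids the logarithmic loss.
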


Applying Lemma \ref{sigmainterpolation} and (\ref{interpolation}) to (\ref{finalKT}), we obtain for $N>3$, $1\leq r \leq \infty$ and some $\delta > 0$:

\begin{equation*}
\begin{split}
\|K_{T}\|_{L^{r}_{t}([0,T];L^{N+3}_{x})} ^{N+3} & \lesssim \|\sigma_{-1}\|_{L^{\infty}_{t}([0,T];L^{\frac{N+3}{3}}_{x})}^{N+3} \\
 & \lesssim \|p_{0}^{\frac{2N+3+\delta}{3}}f\|_{L^{1}_{x}L^{1}_{p}}^{3} \\
 & \lesssim \|p_{0}^{\frac{2N-3N\eta+3+\delta}{3(1-\eta)}}f\|_{L^{1}_{x}L^{1}_{p}}^{3-3\eta}\|p_{0}^{N}f\|_{L^{1}_{x}L^{1}_{p}}^{3\eta}
\end{split}
\end{equation*}

Setting $\eta = \frac{1-\gamma}{3}$, we obtain the needed estimate for $K_{T}$:

\begin{prop}
Given $1\leq r \leq \infty$, $\gamma \in (0,1)$, $N>3$ and $\delta > 0$,
\bea\label{KTmoment}
\|K_{T}\|_{L^{r}_{t}([0,T];L^{N+3}_{x})} ^{N+3} \lesssim \|p_{0}^{\frac{N(1+\gamma)+3+\delta}{2+\gamma}}f\|_{L^{\infty}_{t}([0,T);L^{1}_{x}L^{1}_{p})}^{2+\gamma}\|p_{0}^{N}f\|_{L^{\infty}_{t}([0,T);L^{1}_{x}L^{1}_{p})}^{1-\gamma} 
\eea
\end{prop}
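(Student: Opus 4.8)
The plan is to chain together the three ingredients assembled just above: the operator estimate \eqref{finalKT} for $K_T$, the interpolation estimate \eqref{interpolation} of Luk--Strain, and the $\sigma_{-1}$ interpolation Lemma \ref{sigmainterpolation}. All of this is essentially displayed in the excerpt; the proposition is the bookkeeping that records the final exponents, so the ``proof'' is really a matter of verifying that the parameter choices are admissible.

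First I would apply \eqref{finalKT} with $3q = N+3$, i.e. $q = \frac{N+3}{3}$. The hypothesis $q>2$ there amounts to $N>3$, which is assumed, so
\[
\|K_T\|_{L^r_t([0,T];L^{N+3}_x)}^{N+3} \lesssim \|\sigma_{-1}\|_{L^\infty_t([0,T];L^{\frac{N+3}{3}}_x)}^{N+3}.
\]
Next I would invoke Lemma \ref{sigmainterpolation} with $r \rightsquigarrow \frac{N+3}{3}$ and $s=1$: the condition $\nu > 2\frac{r}{s}-1 = \frac{2(N+3)}{3}-1 = \frac{2N+3}{3}$ is met by taking $\nu = \frac{2N+3+\delta}{3}$ for any $\delta>0$, giving
\[
\|\sigma_{-1}\|_{L^{\frac{N+3}{3}}_x} \lesssim \|p_0^{\frac{2N+3+\delta}{3}} f\|_{L^1_x L^1_p}^{\frac{3}{N+3}},
\]
so raising to the power $N+3$ produces $\|p_0^{\frac{2N+3+\delta}{3}} f\|_{L^1_x L^1_p}^{3}$ as in the middle line of the display preceding the statement.

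Then I would feed this into the Luk--Strain interpolation \eqref{interpolation}, applied with $q=1$, exponent $\rho = \frac{2N+3+\delta}{3}$, and a parameter $\eta \in (0,\tfrac13)$ chosen so the resulting power $\tau$ on the low-moment factor, namely $\tau = \frac{\rho - \eta(N+3-3)}{1-\eta} = \frac{2N-3N\eta+3+\delta}{3(1-\eta)}$, is the one we want; this yields
\[
\|p_0^{\frac{2N+3+\delta}{3}} f\|_{L^1_x L^1_p}^{3} \lesssim \|p_0^{\tau} f\|_{L^\infty_t L^1_x L^1_p}^{3(1-\eta)} \|p_0^N f\|_{L^\infty_t L^1_x L^1_p}^{3\eta}.
\]
Finally, setting $\eta = \frac{1-\gamma}{3}$ for $\gamma \in (0,1)$ gives $3\eta = 1-\gamma$, $3(1-\eta) = 2+\gamma$, and a direct computation of $\tau$ collapses it to $\frac{N(1+\gamma)+3+\delta}{2+\gamma}$ (after renaming $\delta$ by a harmless constant multiple), which is exactly \eqref{KTmoment}. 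The only things to check along the way are that $0 < q\eta = \eta = \frac{1-\gamma}{3} < 1$ (immediate since $\gamma \in (0,1)$), that the $q>2$ / $N>3$ compatibility holds, and that $\delta$ can absorb the various strict-inequality slacks; there is no real obstacle, since the genuinely substantive estimates — the averaging-operator bound behind \eqref{finalKT} and the interpolation inequality \eqref{interpolation} — are already established. If anything, the mild bookkeeping point to be careful about is that the interpolation \eqref{interpolation} is stated with a $\geq$ on $\tau$, so one should note that using the smallest admissible $\tau$ (equality) is what gives the cleanest exponent, and that enlarging $\delta$ only weakens the claim.
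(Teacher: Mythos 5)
Your proposal is correct and follows exactly the paper's own derivation: apply \eqref{finalKT} with $q=\frac{N+3}{3}$ (so $q>2$ iff $N>3$), then Lemma \ref{sigmainterpolation} with $s=1$ and $\nu=\frac{2N+3+\delta}{3}$, then the Luk--Strain interpolation \eqref{interpolation} with $q=1$ and $\eta=\frac{1-\gamma}{3}$, which yields $3\eta=1-\gamma$, $3(1-\eta)=2+\gamma$, and $\tau=\frac{N(1+\gamma)+3+\delta}{2+\gamma}$. The parameter checks you record ($0<\eta<\tfrac13$, admissibility of equality in the choice of $\tau$, absorbing slack into $\delta$) are the same ones implicit in the paper.
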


\section{Estimates on $K_{S}$}

Recall Strichartz estimates for the wave operator from (4.7) in Sogge \cite{Sogge}:

\begin{thm}\label{Strichartz Estimates}(Strichartz Estimates)

Given $\lambda\in (0,1)$ and solution $u:[a,b]\times\mathbb{R}^{3}\rightarrow \mathbb{R}$ to $\square u = F$ on $[a,b] \times \mathbb{R}^{3}$ with initial data $u|_{t=a}= u(a)$ and $\partial_{t}u|_{t=a} = \partial_{t}u(a)$, there exists a constant $C_{\lambda}$ such that
$$\|u\|_{L^{\frac{2}{\lambda}}_{t}([a,b];L^{\frac{2}{1-\lambda}})} +\|u\|_{L^{\infty}_{t}([a,b];\dot{H}^{\lambda})} + \|\partial_{t} u\|_{L^{\infty}_{t}([a,b];\dot{H}^{\lambda})} 
\leq C_{\lambda} \big(  \|F\|_{L^{\frac{2}{1+\lambda}}_{t}([a,b];L^{\frac{2}{2-\lambda}})} +\|u(a)\|_{\dot{H}^{\lambda}} + \|\partial_{t} u(a)\|_{\dot{H}^{\lambda}} \big)$$

\end{thm}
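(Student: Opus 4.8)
This is the classical space–time estimate for the $3$D wave equation, so the plan is to reproduce its standard derivation. After a time translation I may take $a=0$, and by Duhamel's formula I would write
\[
u(t) = \cos(t\sqrt{-\Delta})\,u(0) + \f{\sin(t\sqrt{-\Delta})}{\sqrt{-\Delta}}\,\partial_t u(0) + \int_0^t \f{\sin((t-s)\sqrt{-\Delta})}{\sqrt{-\Delta}}\,F(s)\,ds,
\]
expand $\cos$ and $\sin$ through the half–wave propagators $e^{\pm it\sqrt{-\Delta}}$, and estimate the homogeneous and the Duhamel pieces separately. The energy terms $\|u\|_{L^\infty_t\dot H^\lambda}+\|\partial_t u\|_{L^\infty_t\dot H^\lambda}$ on the left are handled by the standard energy identity for $\square$ (conjugated by an appropriate fractional power of $\sqrt{-\Delta}$ so as to land at the $\dot H^\lambda$ level of regularity), so the real content is the $L^{2/\lambda}_tL^{2/(1-\lambda)}_x$ bound.

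For the homogeneous contribution I would first localize in frequency by Littlewood--Paley projections $P_k$ to the shell $|\xi|\sim 2^k$. There the half–wave propagator obeys the $L^2$ energy bound together with the three–dimensional dispersive decay $\|e^{it\sqrt{-\Delta}}P_k g\|_{L^\infty_x}\lesssim 2^{2k}|t|^{-1}\|P_k g\|_{L^1_x}$; interpolating these, running the $TT^\ast$ argument, and applying the Hardy--Littlewood--Sobolev inequality in the time variable gives, for the admissible pair $q=\f{2}{\lambda}$, $r=\f{2}{1-\lambda}$ (which satisfies $\f1q+\f1r=\f12$ and the scaling identity $\f32-\f3r-\f1q=\lambda$),
\[
\|e^{it\sqrt{-\Delta}}P_k g\|_{L^q_t([0,T];L^r_x)}\lesssim 2^{k\lambda}\|P_k g\|_{L^2_x}.
\]
Since $\lambda\in(0,1)$ forces $q,r\in[2,\infty)$, I would then square in $k$, use Minkowski's inequality and the Littlewood--Paley square–function characterization of $L^r_x$ to reassemble the shells, obtaining $\|e^{it\sqrt{-\Delta}}g\|_{L^q_tL^r_x}\lesssim\|g\|_{\dot H^\lambda}$; applying this to $g=u(0)$ and to $g=|\nabla|^{-1}\partial_t u(0)$ controls the homogeneous part. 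Because $\lambda$ is strictly between $0$ and $1$, the pair is never the excluded endpoint $(2,\infty)$, so plain Hardy--Littlewood--Sobolev suffices and no Keel--Tao endpoint machinery is needed.

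For the Duhamel term I would combine the dual of the homogeneous estimate with the inhomogeneous Strichartz principle: the kernel $\sqrt{-\Delta}^{-1}\sin((t-s)\sqrt{-\Delta})$ gains one derivative, so pairing $(q,r)$ with the admissible pair at regularity $1-\lambda$, namely $(\tilde q,\tilde r)=(\f{2}{1-\lambda},\f{2}{\lambda})$, produces exactly the dual norm $\|F\|_{L^{2/(1+\lambda)}_tL^{2/(2-\lambda)}_x}$ appearing in the statement. The $TT^\ast$–type composition only controls the \emph{untruncated} operator $\int_{\mathbb R}e^{i(t-s)\sqrt{-\Delta}}(\cdots)\,ds$, and the retarded cutoff $\int_0^t$ is recovered by the Christ--Kiselev lemma, which applies here since $\f1q=\f{\lambda}{2}<\f{1+\lambda}{2}$ is the reciprocal of the dual time exponent, i.e.\ the output is strictly more time–integrable than the input.

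The genuinely delicate points, rather than true obstacles, are that (i) the wave propagator has no usable pointwise decay without the frequency localization, so the Littlewood--Paley decomposition and the square–function resummation are unavoidable, and (ii) the passage from the untruncated to the retarded Duhamel operator requires the Christ--Kiselev lemma; both steps are routine precisely because $\lambda$ is bounded away from $0$ and $1$. Since all of this is carried out in detail in Sogge \cite{Sogge}, in the paper one simply quotes the estimate from there.
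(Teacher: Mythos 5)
Your sketch is the standard proof and its bookkeeping is correct throughout (the pair $(2/\lambda,2/(1-\lambda))$ lies on the sharp admissible line away from the $(2,\infty)$ endpoint, the gap condition gives regularity $\lambda$, the dual pair at regularity $1-\lambda$ produces exactly $L^{2/(1+\lambda)}_tL^{2/(2-\lambda)}_x$, and the Christ--Kiselev condition $\tilde q'<q$ holds); the paper offers no proof of this theorem and simply quotes it from Sogge, exactly as you observe at the end. One minor remark: as printed, the theorem puts $\partial_t u$ in $L^\infty_t\dot H^{\lambda}$, whereas scaling (and the paper's own later use, where $\|\partial_t u(a)\|_{\dot H^{\gamma-1}}$ appears) requires $\dot H^{\lambda-1}$, which is what your energy argument actually yields.
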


Recall from the preliminary estimates (\ref{KS}) and (\ref{wave}) for $K_{S}$ that
\bea\label{KSwave}
|K_{S}| \lesssim \square^{-1}(|K|\sigma_{-1})
\eea

Using Strichartz estimates, we can prove the following:

\begin{prop}
Assume $\|\sigma_{-1}\|_{L^{\infty}_{t}([0,T];L^{2}_{x})} \lesssim 1$. Given $N > 3$, $\gamma\in(0,1)$ and $\delta>0$, we obtain the estimate
\bea\label{KSmoment}
\|K_{S}\|_{L^{1}_{t}([0,T];L^{N+3}_{x})} ^{N+3} \lesssim 1 + \|p_{0}^{\frac{N(1+\gamma)+3+\delta}{2+\gamma}}f\|_{L^{\infty}_{t}([0,T);L^{1}_{x}L^{1}_{p})}^{2+\gamma}\|p_{0}^{N}f\|_{L^{\infty}_{t}([0,T);L^{1}_{x}L^{1}_{p})}^{1-\gamma}
\eea
\end{prop}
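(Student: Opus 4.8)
The plan is to start from the pointwise bound $|K_S| \lesssim \square^{-1}(|K|\sigma_{-1})$ in (\ref{KSwave}) and run a Strichartz-estimate argument on a partition of $[0,T]$ chosen so that $\|\sigma_{-1}\|_{L^\infty_t L^2_x}$ is small on each subinterval, closing an iteration that absorbs the $|K|$ factor back into itself. Concretely, I would apply Theorem \ref{Strichartz Estimates} with the exponent $\lambda$ chosen so that $u = \square^{-1}(|K|\sigma_{-1})$ can be estimated in $L^{q_1}_t L^{N+3}_x$ for a suitable $q_1$ (Sobolev-embedding $\dot H^\lambda \hookrightarrow L^{N+3}$ in $\mathbb R^3$ forces $\lambda = \tfrac32 - \tfrac3{N+3}$, which lies in $(0,1)$ precisely because $N>3$ — this is where the hypothesis $N>3$ enters) and the forcing term $F = |K|\sigma_{-1}$ is estimated in the dual Strichartz norm $L^{2/(1+\lambda)}_t L^{2/(2-\lambda)}_x$. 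On that forcing term I would apply Hölder in $x$ to split $\||K|\sigma_{-1}\|_{L^{2/(2-\lambda)}_x} \lesssim \|K\|_{L^{N+3}_x}\|\sigma_{-1}\|_{L^{a}_x}$ for the appropriate $a$, and Hölder in $t$ combined with the local smallness of $\|\sigma_{-1}\|_{L^\infty_t L^2_x}$ to gain a small constant multiplying $\|K\|_{L^{q_1}_t L^{N+3}_x}$.

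The next step is the iteration over the partition. Fix $\varepsilon_0$ small (depending only on the Strichartz constant $C_\lambda$) and choose $0 = T_1 < \cdots < T_k = T$ so that $\|\sigma_{-1}\|_{L^\infty_t([T_i,T_{i+1}];L^2_x)} \le \varepsilon_0$ on each piece; the number of pieces $k$ is finite because $\|\sigma_{-1}\|_{L^\infty_t([0,T];L^2_x)}\lesssim 1$ is assumed. On $[T_i,T_{i+1}]$ the Strichartz estimate gives
$$\|K_S\|_{L^{q_1}_t([T_i,T_{i+1}];L^{N+3}_x)} \lesssim \|\text{data at }T_i\|_{\dot H^\lambda} + \varepsilon_0 \, \|K\|_{L^{q_1}_t([T_i,T_{i+1}];L^{N+3}_x)},$$
and since $K = K_0 + K_T + K_S$, with $\|K_0\|$ controlled by initial data and $\|K_T\|_{L^{q_1}_t L^{N+3}_x}$ controlled by (\ref{KTmoment}) (valid for any $1\le r\le\infty$, so in particular $r=q_1$), the $\varepsilon_0\|K_S\|$ term is absorbed into the left side. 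Summing the resulting bounds over the $k$ subintervals, using that the $\dot H^\lambda$-norm of the Cauchy data of $\square^{-1}(\cdots)$ at each $T_i$ is itself controlled by the Strichartz output on the previous interval (so the errors telescope up to a constant depending on $k$, i.e.\ on $T$), and converting the $L^{q_1}_t$ norm to $L^1_t$ on the bounded interval $[0,T]$ by Hölder, yields
$$\|K_S\|_{L^1_t([0,T];L^{N+3}_x)}^{N+3} \lesssim 1 + \|K_T\|_{L^{q_1}_t([0,T];L^{N+3}_x)}^{N+3} \lesssim 1 + \|p_0^{\frac{N(1+\gamma)+3+\delta}{2+\gamma}}f\|_{L^\infty_t L^1_xL^1_p}^{2+\gamma}\|p_0^N f\|_{L^\infty_t L^1_xL^1_p}^{1-\gamma},$$
which is exactly (\ref{KSmoment}).

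The main obstacle I anticipate is the bookkeeping of the spatial exponents: one must check that the exponent $a$ appearing in the Hölder split $\|K\sigma_{-1}\|_{L^{2/(2-\lambda)}_x}\lesssim \|K\|_{L^{N+3}_x}\|\sigma_{-1}\|_{L^a_x}$, determined by $\tfrac{2-\lambda}{2} = \tfrac1{N+3} + \tfrac1a$ with $\lambda = \tfrac32 - \tfrac3{N+3}$, works out to $a = 2$ (so that the smallness assumption $\|\sigma_{-1}\|_{L^\infty_t L^2_x}\lesssim 1$ is precisely what is needed), and that the time exponents likewise balance: $\tfrac1{q_1} = \lambda/2$ from Strichartz while the forcing needs $L^{2/(1+\lambda)}_t$, and Hölder in $t$ must leave room to extract the small $\sigma_{-1}$ factor in $L^\infty_t$ — this is feasible since $\tfrac{1+\lambda}{2} = \tfrac{\lambda}{2} + \tfrac12$, i.e.\ $\tfrac1{q_1'} = \tfrac1{q_1} + \tfrac12$ would be too much, so one actually takes the $\sigma_{-1}$ factor in $L^\infty_t L^2_x$ and the $K$ factor in $L^{q_1}_t L^{N+3}_x$ directly, matching $\tfrac1{q_1} + 0 = \tfrac1{q_1}$ against the required $\tfrac{1+\lambda}{2}$ only after noting the interval length gives the extra room. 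A secondary subtlety is ensuring the partition constructed for the $\sigma_{-1}$ smallness is compatible with (indeed can be taken as a refinement of) any other partition used elsewhere, but since $k$ depends only on $T$ and the fixed bound on $\|\sigma_{-1}\|_{L^\infty_t L^2_x}$, the final implicit constant remains a continuous function of $T$ as claimed.
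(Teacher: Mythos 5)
Your overall strategy is the same as the paper's (Strichartz for $\square^{-1}(|K|\sigma_{-1})$, Hölder to peel off $\sigma_{-1}$ in $L^2_x$, a partition of $[0,T]$ giving a small constant, absorption of the $K_S$ self-term, and iteration of the Cauchy data across subintervals), but two of your concrete exponent choices are wrong, and one of them breaks the argument as written. First, your choice of $\lambda$: you route the $L^{N+3}_x$ control through the Sobolev embedding $\dot H^{\lambda}\hookrightarrow L^{N+3}(\mathbb{R}^3)$, which forces $\lambda=\tfrac32-\tfrac{3}{N+3}$; but this is \emph{greater} than $1$ exactly when $N>3$ (the condition $\lambda<1$ is equivalent to $N<3$), so it lies outside the admissible range $\lambda\in(0,1)$ of Theorem \ref{Strichartz Estimates} --- your parenthetical claim is backwards. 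The correct move, which the paper makes, is to use the mixed-norm piece $L^{2/\lambda}_tL^{2/(1-\lambda)}_x$ of the Strichartz estimate directly and set $\tfrac{2}{1-\lambda}=N+3$, i.e.\ $\lambda=\tfrac{N+1}{N+3}\in(0,1)$ for all $N>3$; no Sobolev embedding is needed. With that $\lambda$ the Hölder split of the forcing in $L^{2/(2-\lambda)}_x=L^{2(N+3)/(N+5)}_x$ does give $\tfrac{N+5}{2(N+3)}=\tfrac1{N+3}+\tfrac12$, i.e.\ $a=2$; with \emph{your} $\lambda$ the identity $\tfrac{2-\lambda}{2}=\tfrac1{N+3}+\tfrac1a$ gives $\tfrac1a=\tfrac14+\tfrac1{2(N+3)}\neq\tfrac12$, so the verification you claim in your last paragraph actually fails.

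Second, your partition is constructed so that $\|\sigma_{-1}\|_{L^{\infty}_t([T_i,T_{i+1}];L^2_x)}\le\varepsilon_0$. This is not achievable: an $L^\infty_t$ norm does not become small on short subintervals, and the hypothesis only gives $\|\sigma_{-1}\|_{L^\infty_t([T_i,T_{i+1}];L^2_x)}\lesssim 1$. The smallness has to be extracted in a finite time exponent: the time bookkeeping $\tfrac{1+\lambda}{2}=\tfrac{\lambda}{2}+\tfrac12$ places $\sigma_{-1}$ in $L^2_t L^2_x([T_i,T_{i+1}])$, which is bounded by $(T_{i+1}-T_i)^{1/2}\|\sigma_{-1}\|_{L^\infty_tL^2_x}$ and hence can be made $\le \tfrac{1}{2C_\lambda}$ by taking the subintervals short enough --- this is exactly how the paper chooses the partition. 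You gesture at ``the interval length gives the extra room'' at the end, but the partition criterion you actually state is impossible to satisfy and the argument does not close without replacing it by the $L^2_tL^2_x$ smallness.
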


\begin{proof}

By the above estimate (\ref{KSwave}):
\bea\label{ks1}
\|K_{S}\|_{L^{\frac{2(N+3)}{N+1}}_{t}([a,b];L^{N+3}_{x})} \leq \|\square^{-1}(|K|\sigma_{-1})\|_{L^{\frac{2(N+3)}{N+1}}_{t}([a,b];L^{N+3}_{x})}
\eea
Using the decomposition $K = K_{0} + K_{T} + K_{S}$, we obtain for some time interval $[a,b] \subset [0,T)$:
\begin{multline}\label{ks2}
\|K_{S}\|_{L^{\frac{2(N+3)}{N+1}}_{t}([a,b];L^{N+3}_{x})} \leq  \|\square^{-1}(|K_{0}|\sigma_{-1})\|_{L^{\frac{2(N+3)}{N+1}}_{t}([a,b];L^{N+3}_{x})}+ \\ \|\square^{-1}(|K_{T}|\sigma_{-1})\|_{L^{\frac{2(N+3)}{N+1}}_{t}([a,b];L^{N+3}_{x})}  +\|\square^{-1}(|K_{S}|\sigma_{-1})\|_{L^{\frac{2(N+3)}{N+1}}_{t}([a,b];L^{N+3}_{x})}
\end{multline}
Fix an interval $[a,b] \subset [0,T)$. First notice that
$$\|\square^{-1}(|K_{0}|\sigma_{-1})\|_{L^{\frac{2(N+3)}{N+1}}_{t}([a,b];L^{N+3}_{x})} \lesssim \|\square^{-1}(|K_{0}|\sigma_{-1})\|_{L^{\frac{2(N+3)}{N+1}}_{t}([0,T);L^{N+3}_{x})} $$
and similarly
$$\|\square^{-1}(|K_{T}|\sigma_{-1})\|_{L^{\frac{2(N+3)}{N+1}}_{t}([a,b];L^{N+3}_{x})} \lesssim \|\square^{-1}(|K_{T}|\sigma_{-1})\|_{L^{\frac{2(N+3)}{N+1}}_{t}([0,T);L^{N+3}_{x})} $$
Setting $\lambda = \frac{N+1}{N+3}$, we obtain by the Strichartz estimates for the wave operator:
$$\|\square^{-1}(|K_{0}|\sigma_{-1})\|_{L^{\frac{2(N+3)}{N+1}}_{t}([0,T);L^{N+3}_{x})} \lesssim C_{\frac{N+1}{N+3}}\||K_{0}|\sigma_{-1}\|_{L^{\frac{(N+3)}{N+2}}_{t}([0,T);L^{\frac{2(N+3)}{N+5}}_{x})}$$
since we have trivial initial data by (\ref{wave}). Applying the same argument to the $K_{T}$ term and by (\ref{ks2}) we obtain:
\begin{multline}\label{ks3}
\|K_{S}\|_{L^{\frac{2(N+3)}{N+1}}_{t}([a,b];L^{N+3}_{x})}\leq C_{\frac{N+1}{N+3}}\||K_{0}|\sigma_{-1}\|_{L^{\frac{(N+3)}{N+2}}_{t}([0,T);L^{\frac{2(N+3)}{N+5}}_{x})} + C_{\frac{N+1}{N+3}}\||K_{T}|\sigma_{-1}\|_{L^{\frac{(N+3)}{N+2}}_{t}([0,T);L^{\frac{2(N+3)}{N+5}}_{x})} \\  +\|\square^{-1}(|K_{S}|\sigma_{-1})\|_{L^{\frac{2(N+3)}{N+1}}_{t}([a,b];L^{N+3}_{x})}
\end{multline}
Applying H\"older's inequality with $\frac{1}{2}+\frac{1}{N+3} = \frac{N+5}{2(N+3)}$:
\begin{multline}\label{ks4}
\|K_{S}\|_{L^{\frac{2(N+3)}{N+1}}_{t}([a,b];L^{N+3}_{x})}\leq C_{\frac{N+1}{N+3}}\|K_{0}\|_{L^{\frac{2(N+3)}{N+1}}_{t}([0,T);L^{N+3}_{x})}\|\sigma_{-1}\|_{L^{2}_{t}([0,T);L^{2}_{x})} \\ + C_{\frac{N+1}{N+3}}\|K_{T}\|_{L^{\frac{2(N+3)}{N+1}}_{t}([0,T);L^{N+3}_{x})}\|\sigma_{-1}\|_{L^{2}_{t}([0,T);L^{2}_{x})} +\|\square^{-1}(|K_{S}|\sigma_{-1})\|_{L^{\frac{2(N+3)}{N+1}}_{t}([a,b];L^{N+3}_{x})}
\end{multline}
Note that we can bound $ C_{\frac{N+1}{N+3}}\|K_{0}\|_{L^{\frac{2(N+3)}{N+1}}_{t}([0,T);L^{N+3}_{x})}\|\sigma_{-1}\|_{L^{2}_{t}([0,T);L^{2}_{x})}$ by a constant since $K_{0}$ depends only on initial data and we have assumed that $\|\sigma_{-1}\|_{L^{2}_{t}([0,T);L^{2}_{x})} \lesssim_{T} \|\sigma_{-1}\|_{L^{\infty}_{t}([0,T);L^{2}_{x})} \lesssim 1$. Finally, using the estimate (\ref{KTmoment}) on $K_{T}$ from the previous section,
\begin{multline}\label{ks5}
\|K_{S}\|_{L^{\frac{2(N+3)}{N+1}}_{t}([a,b];L^{N+3}_{x})} \\  
 \leq (data) + C \|p_{0}^{\frac{N(1+\gamma)+3+\delta}{2+\gamma}}f\|_{L^{\infty}_{t}([0,T);L^{1}_{x}L^{1}_{p})}^{\frac{2+\gamma}{N+3}}\|p_{0}^{N}f\|_{L^{\infty}_{t}([0,T);L^{1}_{x}L^{1}_{p})}^{\frac{1-\gamma}{N+3}}\|\sigma_{-1}\|_{L^{2}_{t}([0,T];L^{2}_{x})} \\
 + \|\square^{-1}(|K_{S}|\sigma_{-1})\|_{L^{\frac{2(N+3)}{N+1}}_{t}([a,b];L^{N+3}_{x})}
\end{multline}
Similarly, we can now apply Strichartz estimates and H\"older's inequality to the $K_{S}$ term. Note that we kept the time interval on the $K_{S}$ term as $[a,b]$. Setting $ u = \square^{-1}(|K_{S}|\sigma_{-1})$,
\begin{multline}\label{ks6}
\|\square^{-1}(|K_{S}|\sigma_{-1})\|_{L^{\frac{2(N+3)}{N+1}}_{t}([a,b];L^{N+3}_{x})} \leq
C_{\frac{N+1}{N+3}}  ( \|K_{S}\|_{L^{\frac{2(N+3)}{N+1}}_{t}([a,b];L^{N+3})}\|\sigma_{-1}\|_{L^{2}_{t}([a,b];L^{2}_{x})}  \\ +\|u(a)\|_{\dot{H}^{\frac{N+1}{N+3}}}  + \|\partial_{t} u(a)\|_{\dot{H}^{\frac{N+1}{N+3}}})
\end{multline}
Next, we can choose a partition $0 = T_{0} < T_{1} < T_{2} < \ldots < T_{k-1} < T_{k} = T$ of $[0,T]$ such that

$$ \|\sigma_{-1}\|_{L^{2}_{t}([T_{i},T_{i+1}];L^{2}_{x})} \leq \frac{1}{2C_{\frac{N+1}{N+3}}} \ for \ i\in\{0, 1, \ldots, k-1\}$$
\\
due to the assumption $\|\sigma_{-1}\|_{L^{\infty}_{t}([0,T];L^{2}_{x})} \leq \tilde{C}$ for some $\tilde{C}$. (For example, we can choose our partition so that $(T_{i}-T_{i-1})^{\frac{1}{2}} \leq \frac{1}{2\tilde{C}2C_{\frac{N+1}{N+3}}}$ for $i = 1,2,\ldots, k$.) Using $(\ref{ks5})$ and $(\ref{ks6})$:

\begin{multline}\label{ks7}
\|K_{S}\|_{L^{\frac{2(N+3)}{N+1}}_{t}([T_{i},T_{i+1}];L^{N+3}_{x})} \\  
 \leq 2(data)_{i} + 2 C \|p_{0}^{\frac{N(1+\gamma)+3+\delta}{2+\gamma}}f\|_{L^{\infty}_{t}([0,T);L^{1}_{x}L^{1}_{p})}^{\frac{2+\gamma}{N+3}}\|p_{0}^{N}f\|_{L^{\infty}_{t}([0,T);L^{1}_{x}L^{1}_{p})}^{\frac{1-\gamma}{N+3}}\\
 + 2 C_{\frac{N+1}{N+3}} \big( \|u(T_{i})\|_{\dot{H}^{\frac{N+1}{N+3}}} + \|\partial_{t} u(T_{i})\|_{\dot{H}^{\frac{N+1}{N+3}}} \big)
\end{multline}
Thus, by H\"older's inequality, our choice of partition, and then (\ref{ks7}):

\begin{multline}
\||K_{S}|\sigma_{-1}\|_{L^{\frac{N+3}{N+2}}_{t}([T_{i},T_{i+1}];L^{\frac{2(N+3)}{N+5}}_{x})} \leq \frac{1}{2C_{\frac{N+1}{N+3}}}\|K_{S}\|_{L^{\frac{2(N+3)}{N+1}}_{t}([T_{i},T_{i+1}];L^{N+3}_{x})}\\  
 \leq \frac{1}{C_{\frac{N+1}{N+3}}}(data)_{i} + \frac{1}{C_{\frac{N+1}{N+3}}} C \|p_{0}^{\frac{N(1+\gamma)+3+\delta}{2+\gamma}}f\|_{L^{\infty}_{t}([0,T);L^{1}_{x}L^{1}_{p})}^{\frac{2+\gamma}{N+3}}\|p_{0}^{N}f\|_{L^{\infty}_{t}([0,T);L^{1}_{x}L^{1}_{p})}^{\frac{1-\gamma}{N+3}}\\
 + \|u(T_{i})\|_{\dot{H}^{\frac{N+1}{N+3}}} + \|\partial_{t} u(T_{i})\|_{\dot{H}^{\frac{N+1}{N+3}}}
\end{multline}
Using the Strichartz estimates again for $[T_{i-1},T_{i}]$, we obtain:
\begin{multline*}
\|u(T_{i})\|_{\dot{H}^{\frac{N+1}{N+3}}} + \|\partial_{t} u(T_{i})\|_{\dot{H}^{\frac{N+1}{N+3}}} \\
\leq C_{\frac{N+1}{N+3}} \bigg( \|u(T_{i-1})\|_{\dot{H}^{\frac{N+1}{N+3}}} + \|\partial_{t} u(T_{i-1})\|_{\dot{H}^{\frac{N+1}{N+3}}} +  \||K_{S}|\sigma_{-1}\|_{L^{\frac{N+3}{N+2}}_{t}([T_{i-1},T_{i}];L^{\frac{2(N+3)}{N+5}}_{x})} \bigg) \\
\leq (data)_{i} + C \|p_{0}^{\frac{N(1+\gamma)+3+\delta}{2-\gamma}}f\|_{L^{\infty}_{t}([0,T);L^{1}_{x}L^{1}_{p})}^{\frac{2+\gamma}{N+3}}\|p_{0}^{N}f\|_{L^{\infty}_{t}([0,T);L^{1}_{x}L^{1}_{p})}^{\frac{1-\gamma}{N+3}}\\
 + 2 C_{\frac{N+1}{N+3}} \big( \|u(T_{i-1})\|_{\dot{H}^{\frac{N+1}{N+3}}} + \|\partial_{t} u(T_{i-1})\|_{\dot{H}^{\frac{N+1}{N+3}}} \big)
\end{multline*}
Thus, since $u(0) = \partial_{t} u(0) = 0$, we do an iteration of the above to get the following estimate:
\begin{multline}
\|u(T_{j+1})\|_{\dot{H}^{\frac{N+1}{N+3}}} + \|\partial_{t} u(T_{j+1})\|_{\dot{H}^{\frac{N+1}{N+3}}} \\
\leq \sum_{i=0}^{j}{(2 C_{\frac{N+1}{N+3}})^{j-i}\bigg( (data)_{i} + C \|p_{0}^{\frac{N(1+\gamma)+3+\delta}{2-\gamma}}f\|_{L^{\infty}_{t}([0,T);L^{1}_{x}L^{1}_{p})}^{\frac{2+\gamma}{N+3}}\|p_{0}^{N}f\|_{L^{\infty}_{t}([0,T);L^{1}_{x}L^{1}_{p})}^{\frac{1-\gamma}{N+3}} \bigg)}
\end{multline}
Plugging this estimate into (\ref{ks7}) and using the triangle inequality to sum over the entire partition,

$$ \|K_{S}\|_{L^{\frac{2(N+3)}{N+1}}_{t}([T_{i},T_{i+1}];L^{N+3}_{x})} \lesssim \sum\limits_{i=0}^{k-1}\|K_{S}\|_{L^{\frac{2(N+3)}{N+1}}_{t}([T_{i},T_{i+1}];L^{N+3}_{x})} \lesssim 1 + \|p_{0}^{\frac{N(1+\gamma)+3+\delta}{2+\gamma}}f\|_{L^{\infty}_{t}([0,T);L^{1}_{x}L^{1}_{p})}^{\frac{2+\gamma}{N+3}}\|p_{0}^{N}f\|_{L^{\infty}_{t}([0,T);L^{1}_{x}L^{1}_{p})}^{\frac{1-\gamma}{N+3}} $$
which implies the estimate (\ref{KSmoment}) after an application of H\"{o}lder's inequality in the time variable.
\end{proof}

\section{Bounds on Higher Moments}

We have the standard moment estimate recalled from Proposition 7.3 in \cite{Luk-Strain}:

\begin{prop}\label{lukstrainmoment}
Given $N>0$, we have the uniform estimate

$$ \|p_{0}^{N}f\|_{L^{\infty}_{t}([0,T);L^{1}_{x}L^{1}_{p})} \lesssim   \|p_{0}^{N}f_{0}\|_{L^{\infty}_{t}([0,T);L^{1}_{x}L^{1}_{p})} + \|K\|_{L^{1}_{t}([0,T);L^{N+3}_{x})}^{N+3} $$
\end{prop}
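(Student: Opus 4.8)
The plan is to run the classical Glassey--Strauss moment argument. Set $m_N(t)\eqdef\|p_0^N f\|_{L^1_xL^1_p}(t)$. Differentiating in $t$ and substituting $\partial_t f$ from \eqref{vlasov}, the transport term $\hat p\cdot\nabla_x f$ integrates to zero in $x$; for the force term I would integrate by parts in $p$, using that $K=E+\hat p\times B$ is divergence-free in $p$ (because $\hat p=\nabla_p p_0$, so $\nabla_p\times\hat p=0$, and $B$ is $p$-independent), together with $\nabla_p(p_0^N)=Np_0^{N-1}\hat p$ and $|\hat p|\le1$. This gives $m_N'(t)\lesssim\int_{\mathbb R^3\times\mathbb R^3}p_0^{N-1}(|E|+|B|)f\,dx\,dp$, where I used $|K|\le|E|+|B|$ and that $|E|+|B|$ does not depend on $p$.

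Next I would prove the momentum interpolation inequality $\int_{\mathbb R^3}p_0^{N-1}f(t,x,p)\,dp\lesssim\|f\|_{L^\infty_{x,p}}^{1/(N+3)}\big(\int_{\mathbb R^3}p_0^Nf(t,x,p)\,dp\big)^{(N+2)/(N+3)}$ for each fixed $(t,x)$: split the $p$-integral at radius $R$, bound $f\le\|f\|_{L^\infty}$ with $\int_{|p|\le R}p_0^{N-1}\,dp\lesssim R^{N+2}$ on $|p|\le R$ and $p_0^{N-1}\le R^{-1}p_0^N$ on $|p|>R$, then optimize via $R^{N+3}\approx\|f\|_{L^\infty}^{-1}\int p_0^Nf\,dp$. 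Inserting this into the bound from Step 1 and applying H\"older in $x$ with exponents $(N+3,\tfrac{N+3}{N+2})$ --- putting $|E|+|B|$ in $L^{N+3}_x$ and the interpolated density factor in $L^{(N+3)/(N+2)}_x$, which collapses to $m_N(t)^{(N+2)/(N+3)}$ --- and using the conservation law \eqref{conserve2} to get $\|f\|_{L^\infty_{x,p}}\lesssim\|f_0\|_{L^\infty_{x,p}}\lesssim1$, I arrive at the differential inequality $m_N'(t)\lesssim(\|E(t)\|_{L^{N+3}_x}+\|B(t)\|_{L^{N+3}_x})\,m_N(t)^{(N+2)/(N+3)}$.

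The final step is the substitution that produces the $(N+3)$-rd power: set $h(t)=m_N(t)^{1/(N+3)}$, so that $h'(t)\lesssim\|E(t)\|_{L^{N+3}_x}+\|B(t)\|_{L^{N+3}_x}$; integrating in time, taking the supremum over $[0,T)$, and raising to the power $N+3$ yields $m_N(t)\lesssim m_N(0)+\|E\|_{L^1_t([0,T);L^{N+3}_x)}^{N+3}+\|B\|_{L^1_t([0,T);L^{N+3}_x)}^{N+3}$, which is the claim with $\|K\|^{N+3}_{L^1_tL^{N+3}_x}$ understood as the field contribution $\|E\|^{N+3}_{L^1_tL^{N+3}_x}+\|B\|^{N+3}_{L^1_tL^{N+3}_x}$ (recall $|K|\le|E|+|B|$). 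The main obstacle is the routine-but-essential bookkeeping of exponents so the H\"older pairing and the $h$-substitution close consistently; one must also justify the $p$-integration by parts with vanishing boundary term, which follows from the decay of $f$ in $p$ guaranteed by the weighted regularity hypotheses, and note that the estimate is first obtained as an a priori bound (finiteness of $m_N$ propagating along the flow) before being quoted in this form.
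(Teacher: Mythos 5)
Your proof is correct and is the standard Glassey--Strauss moment-propagation argument (differentiate $m_N$, integrate by parts in $p$ using that $K$ is $p$-divergence-free, interpolate $\int p_0^{N-1}f\,dp$ against $\int p_0^Nf\,dp$ and $\|f\|_{L^\infty}$, apply H\"older with exponents $(N+3,\tfrac{N+3}{N+2})$, and close via the substitution $h=m_N^{1/(N+3)}$). The paper does not prove this proposition itself but quotes it from Proposition 7.3 of Luk--Strain, whose proof is exactly this argument, so your route matches; your reading of $\|K\|^{N+3}$ as $\|E\|^{N+3}+\|B\|^{N+3}$ is also consistent with how the paper restates the estimate in \eqref{momentest}.
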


It suffices to bound moments $ \|p_{0}^{N}f\|_{L^{\infty}_{t}([0,T);L^{1}_{x}L^{1}_{p})} $ for all $N >0$ due to the following sharpened estimate from \cite{Luk-Strain}:

\begin{prop}\label{Jacobian}
Over any spatial characteristic curve $X(s;t,x,p)$ we have the bound:
\begin{multline}
\sup\limits_{(t,x,p)\in\mathbb{R_{+}}\times\mathbb{R}_{x}^{3}\times\mathbb{R}_{p}^{3}} \int_{0}^{T_{*}} \big( |E(s,X(s;t,x,p)| + |B(s,X(s;t,x,p)|  \big) ds \lesssim 1 + \|K\sigma_{-1}\|_{L^{\infty}_{t}([0,T];L^{2+\lambda}_{x}L^{1}_{p})} \\ + \|\sigma_{-1}\|_{L^{\infty}_{t}([0,T];L^{3+\tilde{\lambda}}_{x}L^{1}_{p})}
\end{multline}
for any $\lambda, \tilde{\lambda} > 0$.
\end{prop}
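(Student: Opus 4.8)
\noindent The plan is to run the Glassey--Strauss decomposition pointwise along the characteristic and then convert each ``cone'' contribution into a spacetime integral by a single change of variables whose Jacobian I can control \emph{uniformly in the momentum support}; the last point is what keeps the argument from feeding $P$ back into itself. Write $E=E_0+E_T+E_S$, $B=B_0+B_T+B_S$ and abbreviate $|K|=|E|+|B|$. By the estimates recalled in the Preliminaries (equations \eqref{KT}, \eqref{KS} and Proposition \ref{FieldEst}), along $X(s)=X(s;t,x,p)$ one has
\[
(|E|+|B|)(s,X(s))\ \lesssim\ (|E_0|+|B_0|)(s,X(s))\ +\ W_2(\sigma_{-1})(s,X(s))\ +\ \square^{-1}\!\big(|K|\,\sigma_{-1}\big)(s,X(s)).
\]
Since $E_0,B_0$ are the free Maxwell evolutions of the smooth (resp.\ $H^D$) initial fields, they are bounded on $[0,T_*]$, so the $(E_0,B_0)$ contribution to $\int_0^{T_*}(\cdot)\,ds$ is $\lesssim 1$. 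It remains to estimate $I_T:=\int_0^{T_*}W_2(\sigma_{-1})(s,X(s))\,ds$ and $I_S:=\int_0^{T_*}\square^{-1}(|K|\sigma_{-1})(s,X(s))\,ds$.

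The main device is the change of variables $(s,r,\omega)\mapsto(\tau,z)$, $\tau=s-r$, $z=X(s)+r\omega$, on $\{0\le r\le s\le T_*,\ \omega\in\mathbb{S}^{2}\}$. It is injective: for fixed $z$ the function $g(s):=|z-X(s)|-(s-\tau)$ satisfies $g'(s)=\frac{(X(s)-z)\cdot\hat{V}(s)}{|z-X(s)|}-1\le|\hat{V}(s)|-1<0$, so $|z-X(s)|=s-\tau$ has at most one root, after which $\omega=(z-X(s))/|z-X(s)|$ is determined. A direct computation of the $4\times4$ Jacobian gives $d\tau\,dz=r^2\,(1+\hat{V}(s)\cdot\omega)\,ds\,dr\,d\omega$, with $d\omega$ the standard measure on $\mathbb{S}^{2}$. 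Using $W_2(\sigma_{-1})(s,X(s))\approx\int_0^s\!\int_{\mathbb{S}^{2}}\sigma_{-1}(s-r,X(s)+r\omega)\,d\omega\,dr$ and $\square^{-1}(|K|\sigma_{-1})(s,X(s))\approx\int_0^s\!\int_{\mathbb{S}^{2}}r\,(|K|\sigma_{-1})(s-r,X(s)+r\omega)\,d\omega\,dr$, the change of variables turns these into
\[
I_T\ \lesssim\ \int_0^{T_*}\!\!\int_{\Omega_\tau}\frac{\sigma_{-1}(\tau,z)}{r^2\,(1+\hat{V}(s)\cdot\omega)}\,dz\,d\tau,\qquad I_S\ \lesssim\ \int_0^{T_*}\!\!\int_{\Omega_\tau}\frac{(|K|\sigma_{-1})(\tau,z)}{r\,(1+\hat{V}(s)\cdot\omega)}\,dz\,d\tau,
\]
where $r,s,\omega$ now denote the functions of $(\tau,z)$ just described and $\Omega_\tau$ is the image of the slice $\{\tau\}$.

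For fixed $\tau$ I then apply H\"older's inequality in $z$ with exponents $q$ (on $\sigma_{-1}$, resp.\ $|K|\sigma_{-1}$) and $q'$, bound the $L^q_z$ factor by the norm on all of $\mathbb{R}^3$, and change back to $(s,\omega)$ to evaluate the weight: for $I_T$ this is $\int_\tau^{T_*}\!\int_{\mathbb{S}^{2}}r^{\,2-2q'}(1+\hat{V}(s)\cdot\omega)^{1-q'}\,d\omega\,ds$, and for $I_S$ the same with exponent $2-q'$ on $r$. The angular integral equals $\frac{2\pi}{v(2-q')}\big[(1+v)^{2-q'}-(1-v)^{2-q'}\big]$ with $v=|\hat{V}(s)|\in[0,1)$, which is bounded by a constant depending on $q'$ only, \emph{uniformly in $v$} (hence independently of $P$), precisely when $q'<2$; the radial integral converges iff $q'<3/2$ for $I_T$ and iff $q'<3$ for $I_S$. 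Thus the binding constraint is $q>3$ for $I_T$ and $q>2$ for $I_S$; choosing $q=3+\tilde\lambda$, resp.\ $q=2+\lambda$, and using $\int_0^{T_*}\|h(\tau,\cdot)\|_{L^q_x}\,d\tau\le T_*\,\|h\|_{L^\infty_t L^q_x}$ yields $I_T\lesssim\|\sigma_{-1}\|_{L^\infty_t L^{3+\tilde\lambda}_x}$ and $I_S\lesssim\||K|\sigma_{-1}\|_{L^\infty_t L^{2+\lambda}_x}$. Summing the three pieces and taking $\sup$ over $(t,x,p)$ gives the Proposition.

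The step I expect to be the real obstacle is exactly the uniform-in-$v$ control of $\int_{\mathbb{S}^{2}}(1+\hat{V}(s)\cdot\omega)^{1-q'}\,d\omega$: the weight $(1+\hat{V}(s)\cdot\omega)^{-1}$ degenerates like $(1-|\hat{V}(s)|)^{-1}\sim|V(s)|^2\sim P(s)^2$ near $\omega=-\hat{V}(s)/|\hat{V}(s)|$, so a crude bound would put a power of $P(T_*)$ back on the right-hand side and destroy the bootstrap; it is the restriction $q'<2$ --- equivalently the Lebesgue exponents $3+$ and $2+$ appearing in the statement --- that makes this integral converge with a $P$-free constant. A subsidiary point needing care is the global injectivity of the change of variables (which uses only $|\hat{V}|<1$, hence the strict monotonicity of $g$) and enough regularity of $X$ (here $C^1$, from $E,B\in C^1$) to justify the change-of-variables formula; one should also check that the same argument applies verbatim to $E_S,B_S$ separately, not just to the combination.
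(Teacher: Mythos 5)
Your proof is correct and follows essentially the same route as the paper's: the Glassey--Strauss cone bounds for $K_T$ and $K_S$, Pallard's change of variables $z=X(s)+r\omega$ with Jacobian $(1+\hat{V}(s)\cdot\omega)\,r^{2}$ (the paper performs it slice-by-slice in $\tilde{s}=\tau$ rather than as one $4\times 4$ map), and H\"older's inequality leading to the identical thresholds $q>3$ for the $\sigma_{-1}$ term and $q>2$ for the $|K|\sigma_{-1}$ term. Your exact evaluation of the angular integral is a slightly cleaner substitute for the paper's bound $(1-\cos\theta)^{-1}\lesssim\sin^{-2}\theta$, but the content is the same, including the key observation that the constant is uniform in $|\hat{V}|<1$.
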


\begin{proof}
We can rewrite the bounds (\ref{KT}) and (\ref{KS})  in the form:
\bea\label{LSdecomp1}
|K_{T}|(t,x) \lesssim \int_{C_{t,x}}\frac{\sigma_{-1}(s,y)}{(t-s)^{2}} d\sigma
\eea
\bea\label{LSdecomp2}
|K_{S}|(t,x) \lesssim \int_{C_{t,x}}\frac{(|K|\sigma_{-1})(s,y)}{t-s} d\sigma
\eea
where the integral over the cone $C_{t,x}$ is given (\ref{coneintegral}). Using (\ref{LSdecomp1}) and (\ref{LSdecomp2}), we can bound our integral over the characteristic $X(s;t,x,p)$ by:
\begin{multline}
sup \int_{0}^{T_{*}} \big( |E(s,X(s;t,x,p)| + |B(s,X(s;t,x,p)|  \big) ds \\ \lesssim 1 + \int_{0}^{T_{*}} \int_{C_{s,X(s)}}\frac{\sigma_{-1}(\tilde{s},y)}{(s-\tilde{s})^{2}} d\sigma ds +\int_{0}^{T_{*}} \int_{C_{s,X(s)}}\frac{(|K|\sigma_{-1})(\tilde{s},y)}{(s-\tilde{s})} d\sigma ds
\end{multline}
where $d\sigma = d\sigma(\tilde{s},y) = (s-\tilde{s})^{2}\sin(\theta) d\tilde{s} d\phi d\theta $ and $X(s) = X(s;t,x,p)$. The integral terms on the right hand side have the general form:
\bea\label{above}
I_{i}(t,x) \eqdef \int_{0}^{t} \int_{C_{s,X(s)}}\frac{g_{i}}{(s-\tilde{s})^{i}} d\sigma ds \ \ (i = 1,2),
\eea
where $g_{1} = |K|\sigma_{-1}$ and $g_{2} = \sigma_{-1}$. By a change of variables after writing (\ref{above}) expanded as (\ref{coneintegral}), we obtain:

\bea
I_{i}(t,x) =  \int_{0}^{t} \int_{\tilde{s}}^{t} \int_{0}^{2\pi}\int_{0}^{\pi} (s-\tilde{s})^{2-i}\sin(\theta) g_{i}(s, X(s)+(s-\tilde{s})\omega) d\theta d\phi ds d\tilde{s} \eqdef \int_{0}^{t} J_{i}(\tilde{s},X(\tilde{s})) d\tilde{s},
\eea
where again we have adopted the convention $X(s) = X(s;t,x,p)$.

Following Pallard \cite{Pallard}, we define the diffeomorphism $\pi \eqdef X(s) + (s-\tilde{s})\omega$. This change of variables has Jacobian $J_{\pi} = (X'(s)\cdot \omega + 1)(s-\tilde{s})^{2}\sin(\theta) \neq 0$ on $\theta\in(0,\pi)$ (since $|X'(s)| \leq |\hat{V}(s)| < 1$ and hence $X'(s)\cdot \omega + 1 > 0$). First using H\"{o}lder's inequality for H\"older exponents $q,q'$ such that $\frac{1}{q} + \frac{1}{q'} = 1$:
\begin{multline}\label{Jpi}
J_{i}(\tilde{s},X(\tilde{s})) \leq \bigg(\int_{\tilde{s}}^{t} \int_{0}^{2\pi}\int_{0}^{\pi} \frac{(s-\tilde{s})^{(2-i)q'}\sin^{q'}(\theta)}{J_{\pi}^{\frac{q'}{q}}} d\theta d\phi ds \bigg)^{\frac{1}{q'}} \\ \times\bigg(\int_{\tilde{s}}^{t} \int_{0}^{2\pi}\int_{0}^{\pi} g_{i}(s, X(s)+(s-\tilde{s})\omega)^{q} J_{\pi} d\theta d\phi ds \bigg)^{\frac{1}{q}}
\end{multline}
Next, using the change of variables described by the diffeomorphism $\pi$ in the second integral on the right hand side of (\ref{Jpi}):

\bea
J_{i}(\tilde{s},X(\tilde{s})) \leq \bigg(\int_{\tilde{s}}^{t} \int_{0}^{2\pi}\int_{0}^{\pi} \frac{(s-\tilde{s})^{(2-i)q'}\sin^{q'}(\theta)}{J_{\pi}^{\frac{q'}{q}}} d\theta d\phi ds \bigg)^{\frac{1}{q'}} \|g_{i}(\tilde{s})\|_{L^{q}(\mathbb{R}^{3})}
\eea

Finally, plugging in the expression for $J_{\pi}$ into the remaining integral on the right hand side, we see that it is bounded for certain choices of $q$. To see this, choose coordinates $(\theta, \phi)$ such that $X'\cdot \omega = -|X'||\omega|\cos(\theta) \geq -\cos(\theta) $. Then, using this coordinate system:

\bea
\int_{\tilde{s}}^{t} \int_{0}^{2\pi}\int_{0}^{\pi} \frac{(s-\tilde{s})^{(2-i)q'}\sin^{q'}(\theta)}{J_{\pi}^{\frac{q'}{q}}} d\theta d\phi ds \lesssim \int_{\tilde{s}}^{t} \int_{0}^{2\pi}\int_{0}^{\pi} \frac{(s-\tilde{s})^{(2-i)q'-\frac{2q'}{q}}\sin^{q'-\frac{q'}{q}}(\theta)}{(1-\cos(\theta))^{\frac{q'}{q}}} d\theta d\phi ds
\eea
Now, note that $\frac{1}{1-\cos(\theta)} = \frac{1}{1-\sqrt{1-\sin^{2}(\theta)}} = \frac{1+\sqrt{1-\sin^{2}(\theta)}}{\sin^{2}(\theta)} \lesssim \frac{1}{\sin^{2}(\theta)}$ since $1+\sqrt{1-\sin^{2}(\theta)} \leq 2$. Plugging this into the above, we obtain:

\bea
\int_{\tilde{s}}^{t} \int_{0}^{2\pi}\int_{0}^{\pi} \frac{(s-\tilde{s})^{(2-i)q'}\sin^{q'}(\theta)}{J_{\pi}^{\frac{q'}{q}}} d\theta d\phi ds = \int_{\tilde{s}}^{t} \int_{0}^{2\pi}\int_{0}^{\pi} (s-\tilde{s})^{(2-i)q'-\frac{2q'}{q}}\sin^{q'- \frac{3q'}{q}}(\theta) d\theta d\phi ds
\eea
The integral over $\theta$ remains bounded when $q'-\frac{3q'}{q} > -1$ and $(2-i)q' -\frac{2q'}{q} > -1$, i.e. when $q>2$ and $q > \frac{3}{3-i}$ for $i=1,2$.
\end{proof}
From the above estimate, we can use H\"older's inequality and Lemma \ref{sigmainterpolation} to obtain for any $\lambda',\tilde{\lambda},\hat{\lambda} > 0$:
\begin{multline}\label{momentboundfieldchar}
\|K\sigma_{-1}\|_{L^{\infty}_{t}([0,T];L^{2+\lambda}_{x}L^{1}_{p})} + \|\sigma_{-1}\|_{L^{\infty}_{t}([0,T];L^{3+\tilde{\lambda}}_{x}L^{1}_{p})} \\
\lesssim \|K\|_{L^{\infty}_{t}([0,T];L^{3+\lambda'}_{x})}\|\sigma_{-1}\|_{L^{\infty}_{t}([0,T];L^{3}_{x}L^{1}_{p})} + \|\sigma_{-1}\|_{L^{\infty}_{t}([0,T];L^{3+\tilde{\lambda}}_{x}L^{1}_{p})} \\
\lesssim \|K\|_{L^{\infty}_{t}([0,T];L^{3+\lambda'}_{x})}\|p_{0}^{5+2\hat{\lambda}}f\|_{L^{\infty}_{t}([0,T];L^{1}_{x}L^{1}_{p})}^{\frac{1}{3+\hat{\lambda}}} + \|p_{0}^{5+2\tilde{\lambda}}f\|_{L^{\infty}_{t}([0,T];L^{1}_{x}L^{1}_{p})}^{\frac{1}{3+\tilde{\lambda}}}
\end{multline}
By interpolation and the conservation law (\ref{conserve1}), there exists some $\theta \in (0,1)$ such that
$$\|K\|_{L^{\infty}_{t}([0,T];L^{3+\lambda'}_{x})} \lesssim \|K\|_{L^{\infty}_{t}([0,T];L^{6+\lambda'}_{x})}^{\theta}\|K\|_{L^{\infty}_{t}([0,T];L^{2}_{x})}^{1-\theta} \lesssim \|K\|_{L^{\infty}_{t}([0,T];L^{6+\lambda'}_{x})}^{\theta}.$$
By the estimates (\ref{KTmoment}) and (\ref{KSmoment}), we can further bound this by:
\bea\label{furtherbound}
\|K\|_{L^{\infty}_{t}([0,T];L^{6+\lambda'}_{x})} \lesssim 1 + \|p_{0}^{\frac{(3+\lambda')(1+\gamma)+3+\delta}{2+\gamma}}f\|_{L^{\infty}_{t}([0,T);L^{1}_{x}L^{1}_{p})}^{2+\gamma}\|p_{0}^{3+\lambda'}f\|_{L^{\infty}_{t}([0,T);L^{1}_{x}L^{1}_{p})}^{1-\gamma}
\eea
for any $\delta > 0$ and $\gamma \in (0,1)$. Choosing $\delta < \lambda'$, we obtain that
$$ \frac{(3+\lambda')(1+\gamma)+3+\delta}{2+\gamma} < 3 + \lambda',$$ and hence by (\ref{furtherbound}):
$$\|K\|_{L^{\infty}_{t}([0,T];L^{6+\lambda'}_{x})} \lesssim 1 + \|p_{0}^{3+\lambda'}f\|_{L^{\infty}_{t}([0,T);L^{1}_{x}L^{1}_{p})}^{3}. $$ Putting these bounds together, we obtain:
\begin{multline}\label{totalbounds}
\|K\sigma_{-1}\|_{L^{\infty}_{t}([0,T];L^{2+\lambda}_{x}L^{1}_{p})} + \|\sigma_{-1}\|_{L^{\infty}_{t}([0,T];L^{3+\tilde{\lambda}}_{x}L^{1}_{p})}
\lesssim (1 +\|p_{0}^{3+\lambda'}f\|_{L^{\infty}_{t}([0,T);L^{1}_{x}L^{1}_{p})}^{3})\|p_{0}^{5+2\hat{\lambda}}f\|_{L^{\infty}_{t}([0,T];L^{1}_{x}L^{1}_{p})}^{\frac{1}{3+\hat{\lambda}}} \\ + \|p_{0}^{5+2\tilde{\lambda}}f\|_{L^{\infty}_{t}([0,T];L^{1}_{x}L^{1}_{p})}^{\frac{1}{3+\tilde{\lambda}}}
\end{multline}

Thus, in order to satisfy the known continuation criteria stated in Theorem \ref{LSorigin}, we simply need to bound $\|p_{0}^{5+\lambda}f\|_{L^{\infty}_{t}([0,T];L^{1}_{x}L^{1}_{p})} \lesssim 1$ for some $\lambda >0$. To this end, we can use the estimates on $K$ to prove that:

\begin{prop}\label{finalest}
Consider initial data $f_{0}$ such that $\|p_{0}^{N}f_{0}\|_{L^{1}_{x}L^{1}_{p}} \lesssim 1$ and suppose we have the bound $\|\sigma_{-1}\|_{L^{\infty}_{t}([0,T];L^{2}_{x})}  + \|p_{0}^{M}f\|_{L^{\infty}_{t}([0,T];L^{1}_{x}L^{1}_{p})} \lesssim 1$ where $M > \frac{N+3}{2}$ for some $N>3$. Then
$$ \|p_{0}^{N}f\|_{L^{\infty}_{t}([0,T];L^{1}_{x}L^{1}_{p})} \lesssim 1 $$ and
$$ \|K\|_{L^{\infty}_{t}([0,T];L^{N+3}_{x})}\lesssim 1$$.
\end{prop}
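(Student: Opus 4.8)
The plan is to run a moment-bootstrap. First I would invoke the standard moment estimate of Proposition~\ref{lukstrainmoment}, which reduces the first conclusion to controlling $\|K\|_{L^1_t([0,T);L^{N+3}_x)}^{N+3}$, since $\|p_0^N f_0\|_{L^1_x L^1_p}\lesssim 1$ is assumed. Splitting $K=K_0+K_T+K_S$ via the Glassey--Strauss decomposition, the term $K_0$ depends only on the initial data (and $T$) and is harmless, the term $K_T$ is controlled by \eqref{KTmoment} with $r=1$, and $K_S$ is controlled by \eqref{KSmoment} --- and here the hypothesis $\|\sigma_{-1}\|_{L^\infty_t([0,T];L^2_x)}\lesssim 1$ is exactly what \eqref{KSmoment} needs. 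Raising the triangle inequality to the power $N+3$ and combining these, I get, for every $\gamma\in(0,1)$ and $\delta>0$,
\[
\|K\|_{L^1_t([0,T);L^{N+3}_x)}^{N+3}\ \lesssim\ 1 + \big\|p_0^{a(\gamma,\delta)}f\big\|_{L^\infty_t([0,T);L^1_x L^1_p)}^{2+\gamma}\,\big\|p_0^{N}f\big\|_{L^\infty_t([0,T);L^1_x L^1_p)}^{1-\gamma},\qquad a(\gamma,\delta)\eqdef\frac{N(1+\gamma)+3+\delta}{2+\gamma},
\]
with implicit constant depending only on $T$, $N$, $\gamma$, $\delta$ and $\|\sigma_{-1}\|_{L^\infty_t L^2_x}$, not on the solution.

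The crux is the choice of parameters. Since $a(\gamma,\delta)\to\frac{N+3}{2}$ as $\gamma,\delta\to 0^+$ and $M>\frac{N+3}{2}$ by hypothesis, I fix $\gamma\in(0,1)$ and $\delta>0$ small enough that $a(\gamma,\delta)\le M$. Because $p_0\ge 1$, moments are monotone in the exponent, so $\|p_0^{a(\gamma,\delta)}f\|_{L^\infty_t L^1_x L^1_p}\le\|p_0^{M}f\|_{L^\infty_t L^1_x L^1_p}\lesssim 1$, and feeding this back into Proposition~\ref{lukstrainmoment} yields $y\lesssim 1 + y^{1-\gamma}$ for $y\eqdef\|p_0^N f\|_{L^\infty_t([0,T);L^1_x L^1_p)}$. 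On every compact subinterval $[0,T']\subset[0,T)$ the fields are bounded, so \eqref{momentest} shows all momentum moments are finite there; hence $y$ restricted to $[0,T']$ is finite, and the elementary fact that $y\le C(1+y^{1-\gamma})$ with $0<\gamma<1$ and $y<\infty$ forces $y\le C'(C,\gamma)$ gives a bound uniform in $T'<T$. Letting $T'\uparrow T$ produces $\|p_0^N f\|_{L^\infty_t([0,T);L^1_x L^1_p)}\lesssim 1$. The second conclusion then follows by reinserting this bound, together with $\|p_0^M f\|_{L^\infty_t L^1_x L^1_p}\lesssim 1$, into \eqref{KTmoment} with $r=\infty$ and into the Strichartz argument that established \eqref{KSmoment}: every moment on the right-hand sides is now $O(1)$, so $\|K\|_{L^\infty_t([0,T];L^{N+3}_x)}\lesssim 1$.

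The main obstacle is the parameter bookkeeping rather than any new estimate: one must verify that the effective exponent $a(\gamma,\delta)$ coming out of the $K_T$ and $K_S$ bounds can be pushed \emph{strictly} below $M$, which is precisely where the strict inequality $M>\frac{N+3}{2}$ is used via the continuity of $a$ at $\gamma=\delta=0$, while at the same time keeping $\gamma>0$ so that the self-improving exponent $1-\gamma$ stays $<1$ and the absorption actually closes. The only other point needing care is the a priori finiteness of $\|p_0^N f\|$ required to run that absorption, which is supplied by the local regularity of the classical solution on $[0,T')$ for each $T'<T$.
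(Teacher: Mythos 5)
Your proposal is correct and follows essentially the same route as the paper: Proposition~\ref{lukstrainmoment} combined with \eqref{KTmoment} and \eqref{KSmoment}, a choice of $\gamma,\delta$ small enough that the exponent $\frac{N(1+\gamma)+3+\delta}{2+\gamma}$ does not exceed $M$, and absorption of the $y^{1-\gamma}$ term (the paper does this via Young's inequality, you via the equivalent elementary inequality). Your explicit justification of the a priori finiteness of $\|p_0^N f\|$ on compact subintervals, needed to make the absorption legitimate, is a point the paper leaves implicit, but the argument is otherwise the same.
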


\begin{proof}

By the estimates on $K_{T}$ and $K_{S}$ given by (\ref{KTmoment}) and (\ref{KSmoment}) respectively and Proposition \ref{lukstrainmoment}, we obtain for some $\gamma \in (0,1)$:
\bea\label{finalest1}
\|p_{0}^{N}f\|_{L^{\infty}_{t}([0,T];L^{1}_{x}L^{1}_{p})} \lesssim 1 + \|p_{0}^{\frac{N(1+\gamma)+3+\delta}{2+\gamma}}f\|_{L^{\infty}_{t}([0,T);L^{1}_{x}L^{1}_{p})}^{2+\gamma}\|p_{0}^{N}f\|_{L^{\infty}_{t}([0,T);L^{1}_{x}L^{1}_{p})}^{1-\gamma}
\eea

Choose appropriate $0 < \gamma < 1$ and $\delta > 0$ such that $\frac{N(1+\gamma)+3+\delta}{2+\gamma} = M$ and let the implicit constant in (\ref{finalest1}) be denoted by $C > 0$. (Suppose $M= \frac{N+3+\epsilon}{2}$. Then set $\delta = \epsilon + \Big(\frac{N+3+\epsilon}{2}-N\Big)\gamma$. For $\gamma \in (0,1)$ sufficiently small, $\delta > 0$.) Thus, since $\|p_{0}^{M}f\|_{L^{\infty}_{t}([0,T];L^{1}_{x}L^{1}_{p})} \leq B$ for some constant $B > 0$ and by Young's inequality:
\bea\label{youngsineq}
\|p_{0}^{N}f\|_{L^{\infty}_{t}([0,T];L^{1}_{x}L^{1}_{p})} \leq C + C B^{\beta}\|p_{0}^{N}f\|_{L^{\infty}_{t}([0,T);L^{1}_{x}L^{1}_{p})}^{1-\gamma} \leq C + \gamma C^{\frac{1}{\gamma}} B^{\frac{2+\gamma}{\gamma}} + (1-\gamma)\|p_{0}^{N}f\|_{L^{\infty}_{t}([0,T];L^{1}_{x}L^{1}_{p})}
\eea
Thus for some constant $\tilde{C}$,

\bea\label{finalest2}
\|p_{0}^{N}f\|_{L^{\infty}_{t}([0,T];L^{1}_{x}L^{1}_{p})} \leq \frac{1}{\gamma}(C + \gamma \tilde{C} B^{\frac{2+\gamma}{\gamma}}) \lesssim 1
\eea

Finally, plugging (\ref{finalest2}) into (\ref{KTmoment}) and (\ref{KSmoment}), we obtain that $ \|K\|_{L^{\infty}_{t}([0,T];L^{N+3}_{x})}\lesssim 1$.
\end{proof}

\begin{thm}
Suppose $\|p_{0}^{\tilde{N}}f_{0}\|_{L^{1}_{x}L^{1}_{p}} \lesssim 1$ for some $\tilde{N} > 5$. Let $M > 3$. Then $\|p_{0}^{M}f\|_{L^{\infty}_{t}([0,T];L^{1}_{x}L^{1}_{p})} \lesssim 1$ is a continuation criteria for the Vlasov-Maxwell system without compact support.
\end{thm}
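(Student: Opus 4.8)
The plan is to run a finite bootstrap on the moment $\|p_0^N f\|_{L^\infty_tL^1_xL^1_p}$, roughly doubling the admissible exponent at each step, until we pass the level $\tilde N>5$; once there, estimate (\ref{totalbounds}), Proposition \ref{Jacobian}, and Theorem \ref{LSorigin} close the argument. Throughout, $(f,E,B)$ is the solution on $[0,T)$, we work in the setting of Theorem \ref{LSorigin} (so the initial data additionally satisfies (\ref{ini.bd.2.5})--(\ref{ini.bd.6})) together with $\|p_0^{\tilde N}f_0\|_{L^1_xL^1_p}\lesssim 1$, and the continuation criterion $\|p_0^Mf\|_{L^\infty_tL^1_xL^1_p}\lesssim 1$ with $M>3$ is assumed to hold.

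First I would verify the two standing hypotheses of Proposition \ref{finalest}. The bound $\|\sigma_{-1}\|_{L^\infty_t([0,T);L^2_x)}\lesssim 1$ follows at once from Lemma \ref{sigmainterpolation} with $r=2$, $s=1$, $\nu=M$ (legitimate since $M>3=2\tfrac{r}{s}-1$) and the assumed control of $\|p_0^Mf\|_{L^\infty_tL^1_xL^1_p}$; this estimate is established once and never degrades. For the initial-data requirement, since $p_0\ge 1$ we have $\|p_0^Nf_0\|_{L^1_xL^1_p}\le\|p_0^{\tilde N}f_0\|_{L^1_xL^1_p}\lesssim 1$ for every $N\le\tilde N$, so Proposition \ref{finalest} is available at every level $N\in(3,\tilde N]$.

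Next comes the iteration. Put $M_0\eqdef M>3$, so $\|p_0^{M_0}f\|_{L^\infty_tL^1_xL^1_p}\lesssim 1$. Assuming inductively that $\|p_0^{M_k}f\|_{L^\infty_tL^1_xL^1_p}\lesssim 1$ for some $M_k$ with $3<M_k\le\tilde N$: if $2M_k-3>\tilde N$, I would apply Proposition \ref{finalest} with $N=\tilde N$ (admissible since then $M_k>\tfrac{\tilde N+3}{2}$ and $\tilde N>5>3$) to obtain $\|p_0^{\tilde N}f\|_{L^\infty_tL^1_xL^1_p}\lesssim 1$, and stop; otherwise $2M_k-3\le\tilde N$, and I set, for instance, $M_{k+1}\eqdef 2M_k-3-2^{-(k+1)}(M_0-3)$, so that $3<M_{k+1}<2M_k-3\le\tilde N$, and apply Proposition \ref{finalest} with $N=M_{k+1}$ to get $\|p_0^{M_{k+1}}f\|_{L^\infty_tL^1_xL^1_p}\lesssim 1$. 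Since $M_k$ is increasing from $M_0$ we have $2^{-(k+1)}(M_0-3)\le\tfrac12(M_k-3)$, hence $M_{k+1}-3\ge\tfrac32(M_k-3)$ whenever the iteration continues; thus $M_k-3$ grows geometrically and the stopping condition $2M_k-3>\tilde N$ is met after finitely many steps. As there are finitely many steps and each application of Proposition \ref{finalest} yields a constant depending only on the data and the previous constant (and on $T$), the final bound is uniform.

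It then remains to cash in $\|p_0^{\tilde N}f\|_{L^\infty_t([0,T);L^1_xL^1_p)}\lesssim 1$. Choosing $\hat\lambda,\tilde\lambda,\lambda'>0$ small enough that $5+2\hat\lambda<\tilde N$, $5+2\tilde\lambda<\tilde N$, and $3+\lambda'<\tilde N$, estimate (\ref{totalbounds}) gives $\|K\sigma_{-1}\|_{L^\infty_tL^{2+\lambda}_xL^1_p}+\|\sigma_{-1}\|_{L^\infty_tL^{3+\tilde\lambda}_xL^1_p}\lesssim 1$, and Proposition \ref{Jacobian} then yields $\sup_{(t,x,p)}\int_0^{T}\big(|E|+|B|\big)(s,X(s;t,x,p))\,ds<\infty$, which is precisely (\ref{ini.bd.10}); Theorem \ref{LSorigin} then extends the solution past $T$. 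The only genuinely substantial input is Proposition \ref{finalest}, already proved above; the new point is to iterate it through a geometrically converging cascade of exponents rather than attempting to jump from $M$ straight to a large moment, together with the small bookkeeping (capping at $\tilde N$ while still crossing it, and choosing the $M_{k+1}$) needed to make the cascade terminate at a level above $5$. I expect no real obstacle here beyond making sure the cascade parameters stay in the admissible windows of Proposition \ref{finalest} at every step.
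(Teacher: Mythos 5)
Your proposal is correct and follows essentially the same route as the paper: bootstrap the moment bound by iterating Proposition \ref{finalest} from $M>3$ up past the level $5$ (the paper uses the increment $M_{i+1}=M_i+\frac{M_i-3}{2}$ and a final jump into $(5,\tilde N)$, while you use an equivalent geometric cascade capped at $\tilde N$), then conclude via (\ref{totalbounds}), Proposition \ref{Jacobian}, and Theorem \ref{LSorigin}. The differences are only in the bookkeeping of the exponent sequence.
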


\begin{proof}
First, if $M>5$, then by the comment under (\ref{totalbounds}), we are done. (Note that this is also a known continuation criteria found in \cite{Luk-Strain}.)
If $3<M<5$, note that by Lemma \ref{sigmainterpolation} $$\|\sigma_{-1}\|_{L^{\infty}_{t}([0,T];L^{2}_{x})}  + \|p_{0}^{M}f\|_{L^{\infty}_{t}([0,T];L^{1}_{x}L^{1}_{p})} \lesssim \|p_{0}^{M}f\|_{L^{\infty}_{t}([0,T];L^{1}_{x}L^{1}_{p})} \lesssim 1.$$
Suppose $M>3$ and $\|p_{0}^{M}f\|_{L^{\infty}_{t}([0,T];L^{1}_{x}L^{1}_{p})} \lesssim 1$. Since $\|p_{0}^{\tilde{N}}f_{0}\|_{L^{1}_{x}L^{1}_{p}} \lesssim 1$ for some $\tilde{N} > 5$, it follows that $\|p_{0}^{N}f_{0}\|_{L^{1}_{x}L^{1}_{p}} \lesssim 1$ for all $N < 5$. Then, by Proposition \ref{finalest}, we obtain that $\|p_{0}^{N}f\|_{L^{\infty}_{t}([0,T];L^{1}_{x}L^{1}_{p})} \lesssim 1$ for $N=2M-3-\delta$ for $\delta > 0$ as long as $3 < 2M-3-\delta < 5$. Note that if $M>3$, then $2M-3 = M + M-3 > M$. Hence setting $\delta = \frac{M-3}{2}$, we obtain that $N=2M-3-\delta = 2M-3-\frac{M-3}{2} = M + \frac{M-3}{2} > M > 3$.

Let $M = M_{0}$ and suppose, as above, that $$\|p_{0}^{M}f\|_{L^{\infty}_{t}([0,T];L^{1}_{x}L^{1}_{p})} = \|p_{0}^{M_{0}}f\|_{L^{\infty}_{t}([0,T];L^{1}_{x}L^{1}_{p})} \lesssim 1.$$
Then, if $M_{1} = M_{0} + \frac{M_{0}-3}{2} < 5$, we know by the above that
$$\|p_{0}^{M_{1}}f\|_{L^{\infty}_{t}([0,T];L^{1}_{x}L^{1}_{p})} \lesssim 1.$$
Define the sequence $M_{i}$ in this manner: let $M_{i+1} = M_{i} + \frac{M_{i}-3}{2}$. Notice that since $M_{0} > 3$, by the earlier argument, we obtain that $M_{1} >3$. By induction, we obtain that $M_{k} > 3$ for all $k\in \mathbb{N}$.

Since $M=M_{0} >3$, there exists an $\epsilon > 0$ such that $ M_{0} = 3+\epsilon$. We now claim that $M_{k} > M_{0} + \frac{k\epsilon}{2}$. Indeed, this is true in the case of $M_{0}$. Suppose it holds for $k= n$. Then, since $M_{n}-3 > M_{0} -3 + \frac{n\epsilon}{2} = \frac{\epsilon}{2} + \frac{n\epsilon}{4} > \frac{\epsilon}{2}$, it follows that $M_{n+1} = M_{n}+\frac{M_{n}-3}{2} > M_{0} + \frac{n\epsilon}{2} + \frac{\epsilon}{2} = M_{0} + \frac{(n+1)\epsilon}{2}$.

Thus, as $n$ tends to infinity, we know that $M_{n}$ tends to infinity. Thus, there exists some $m \in \mathbb{N}$ such that $3< M_{m} < 5$ but $M_{m+1}>5$. Under our assumption that $\|p_{0}^{M_{0}}f\|_{L^{\infty}_{t}([0,T];L^{1}_{x}L^{1}_{p})} \lesssim 1$, we can iterate the argument above to obtain that $\|p_{0}^{M_{n}}f\|_{L^{\infty}_{t}([0,T];L^{1}_{x}L^{1}_{p})} \lesssim 1$ for all positive integers $n \leq m$. Finally, choose some $\delta > 0$ such that $5 < 2M_{m}-3-\delta < \tilde{N}$.  (This is certainly possible since choosing $\delta = \frac{M_{i}-3}{2}$, we obtain by our choice of $m$ that $M_{m+1}= 2M_{m}-3-\delta > 5$. On the other hand, if $M_{m+1} > \tilde{N} > 5$, we simply choose a large delta such that $2M-3-\delta$ is still greater than $5$ but is less than $\tilde{N}$.) Let us set $\tilde{M} = 2M_{m}-3-\delta$. Since $\tilde{M} < \tilde{N}$, we know that $\|p_{0}^{\tilde{M}}f_{0}\|_{L^{1}_{x}L^{1}_{p}} \lesssim 1$. By Proposition \ref{finalest}, we obtain that $\|p_{0}^{\tilde{M}}\|_{L^{1}_{x}L^{1}_{p}} \lesssim 1$. Since $\tilde{M}>5$, by the comment under (\ref{totalbounds}), we are done.
\end{proof}

\section{Another Field Decomposition}

In this section, we recall the decomposition found in Luk-Strain \cite{L-S} and bound each piece in the form of the operator $W_{2}$ or the inverse d-Alembertain $\square^{-1}$. Note that from this point in this paper, we define 
$$|K| \eqdef |E| + |B|$$
Then, we have that $|K| \leq |K_{0}| + |K_{T}| + |K_{S,1}| + |K_{S,2}|$ where

\begin{prop}\label{LSplanedecomp}
We have the following estimates:
\bea\label{ktls}
|K_{T}| = |E_{T}| + |B_{T}| \lesssim W_{2}(\sigma_{-1})
\eea

\bea\label{ks1ls}
|K_{S,1}| = |E_{S,1}| + |B_{S,1}| \lesssim \square^{-1} (|K| \Phi_{-1})
\eea

\bea\label{ks2ls}
|K_{S,2}| = |E_{S,2}| + |B_{S,2}| \lesssim (W_{2}(\sigma_{-1}^{2}))^{\frac{1}{2}}
\eea

where
\bea\label{phimoment}
\Phi_{-1}(t,x) \eqdef \max\limits_{|\omega|=1}\int_{\mathbb{R}^{3}} \frac{f(t,x,p)dp}{p_{0}(1+\hat{p}\cdot\omega)^{\frac{1}{2}}}.
\eea
\end{prop}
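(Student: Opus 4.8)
All three bounds will be read off from the Glassey--Strauss--type pointwise estimates \eqref{ET0}, \eqref{ES1}, \eqref{ES2} (Propositions 3.1 and 3.4 of \cite{L-S}), which are taken as given; the pointwise inequality $|K|\le |K_0|+|K_T|+|K_{S,1}|+|K_{S,2}|$ is just the triangle inequality applied to the field splitting $E=E_0+E_T+E_{S,1}+E_{S,2}$, $B=B_0+B_T+B_{S,1}+B_{S,2}$ of \cite{L-S}, with $|K_\bullet|\eqdef |E_\bullet|+|B_\bullet|$. In each of the three cases the plan is the same: bound the $p$-integral by $\sigma_{-1}$ or by $\Phi_{-1}$, observe that the power of $(t-s)$ in the denominator of the integrand combines with the factor $(t-s)^2$ in the cone measure $d\sigma$ from \eqref{coneintegral}, and then change variables $s\mapsto t-s$ so that the remaining spherical average is exactly the one appearing in the definition of $W_\alpha$.

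For \eqref{ktls}: since $(1+\hat p\cdot\omega)^{-1}\lesssim\min\{p_0^2,\theta^{-2}\}$ (the inequality used in the proof of Lemma \ref{sigmainterpolation}, with $\theta=\angle(p/|p|,-\omega)$), we have $(1+\hat p\cdot\omega)^{-1/2}\lesssim p_0$, hence $p_0^{-2}(1+\hat p\cdot\omega)^{-3/2}\lesssim p_0^{-1}(1+\hat p\cdot\omega)^{-1}$, so the $p$-integral in \eqref{ET0} is $\lesssim\sigma_{-1}$; the integrand of \eqref{ET0} carries a factor $(t-s)^{-2}$ which cancels the $(t-s)^2$ in $d\sigma$, and after $s\mapsto t-s$ what remains is a constant multiple of $W_2(\sigma_{-1})$. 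For \eqref{ks1ls}: bound $|B|\le|E|+|B|=|K|$ and the $p$-integral in \eqref{ES1} by $\Phi_{-1}$; the factor $(t-s)^{-1}$ against $d\sigma$ leaves the weight $(t-s)^1$, so after $s\mapsto t-s$ one obtains (up to a constant) $W_1(|K|\Phi_{-1})$, and in three space dimensions $W_1(F)$ is a constant multiple of the retarded potential $\square^{-1}F$ of $F$ (with vanishing Cauchy data), which gives \eqref{ks1ls}.

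The estimate \eqref{ks2ls} is the one requiring an extra idea. Write $G\eqdef |E\cdot\omega|+|B\cdot\omega|+|B+\omega\times E|$. Bounding the $p$-integral in \eqref{ES2} by $\sigma_{-1}$ and cancelling one power of $(t-s)$ against $d\sigma$ gives
\[
|K_{S,2}|(t,x)\lesssim \int_0^t (t-s)\int_{\mathbb{S}^{2}} G(s,x+(t-s)\omega)\,\sigma_{-1}(s,x+(t-s)\omega)\,d\mu(\omega)\,ds .
\]
Now apply Cauchy--Schwarz on $\mathbb{S}^{2}\times[0,t]$ with respect to $d\mu(\omega)\,ds$, \emph{keeping the factor $(t-s)$ with $G$}, i.e. splitting the integrand as $\big[(t-s)\,G\big]\cdot\big[\sigma_{-1}\big]$. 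The $\sigma_{-1}$-factor is $\big(\int_0^t\int_{\mathbb{S}^{2}}\sigma_{-1}^2(s,x+(t-s)\omega)\,d\mu(\omega)\,ds\big)^{1/2}$, which after $s\mapsto t-s$ is a constant times $\big(W_2(\sigma_{-1}^2)(t,x)\big)^{1/2}$ — the weight is $(t-s)^0$, so it is $W_2$ and not $W_0$, which is why the split was chosen this way. The $G$-factor is $\big(\int_0^t (t-s)^2\int_{\mathbb{S}^{2}}G^2(s,x+(t-s)\omega)\,d\mu(\omega)\,ds\big)^{1/2}=\big(\int_{C_{t,x}}G^2\,d\sigma\big)^{1/2}$, and since $G^2\lesssim |E\cdot\omega|^2+|B\cdot\omega|^2+|B+\omega\times E|^2\le |K_g|^2$, this is $\lesssim\big(\int_{C_{t,x}}|K_g|^2\,d\sigma\big)^{1/2}\lesssim 1$ by the conservation of the $L^2$ norm of $|K_g|$ over the space-time cone (a consequence of the energy conservation law \eqref{conserve1} through the Glassey--Strauss flux identity; the flux is finite since compact momentum support is in force here — cf. \cite{Glassey-Strauss}, \cite{L-S}). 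Multiplying the two factors yields \eqref{ks2ls}.

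The only substantive input is this light-cone $L^2$ bound on $K_g$ — precisely the conservation on the space-time cone alluded to in the outline — together with $|B|\le |K|$ for $K_{S,1}$ and the elementary inequality $(1+\hat p\cdot\omega)^{-1/2}\lesssim p_0$ for $K_T$; everything else is the change of variables $s\mapsto t-s$ and the matching of $(t-s)$-weights against $d\sigma$, so I do not expect any real difficulty beyond bookkeeping the (harmless, possibly $T$-dependent) constants.
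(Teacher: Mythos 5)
Your proposal is correct and follows essentially the same route as the paper: quote the pointwise bounds \eqref{ET0}--\eqref{ES2} from \cite{L-S}, dominate the momentum integrals by $\sigma_{-1}$ (using $(1+\hat p\cdot\omega)^{-1/2}\lesssim p_0$ for $K_T$) or by $\Phi_{-1}$, match the $(t-s)$-weights against the cone measure to recognize $W_2$ and the retarded potential $\square^{-1}$, and for $K_{S,2}$ apply Cauchy--Schwarz on the cone together with the $L^2$ conservation of $K_g$ on $C_{t,x}$. The only differences are cosmetic (how the factor $(t-s)$ is distributed in the Cauchy--Schwarz step), so there is nothing to add.
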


\begin{proof}
Following the decomposition of \cite{L-S} we have that 
\bea
(|E_{T}| + |B_{T}|)(t,x) \lesssim \int_{C_{t,x}}\int_{\mathbb{R}^{3}} \frac{f(s,x+(t-s)\omega,p)}{(t-s)^{2}p_{0}(1+\hat{p}\cdot \omega)}dp d\sigma(\omega)
\eea

Using the change of variable $t-s \rightarrow s$ and writing the integral over the cone $C_{t,x}$ as an integral over spheres of radius $s$, we obtain:
\bea\label{changekt}
(|E_{T}| + |B_{T}|)(t,x) \lesssim \int_{C_{t,x}}\int_{\mathbb{R}^{3}} \frac{f(t-s,x+s\omega,p)}{s^{2}p_{0}(1+\hat{p}\cdot \omega)}dp d\sigma(\omega) \leq \int_{0}^{t}\fint_{\mathbb{S}^{2}} \sigma_{-1}(t-s,x+s\omega) d\sigma(\omega)
\eea
which is of the form $W_{2}(\sigma_{-1})$.

Next, by Proposition 3.4 in \cite{L-S}:
\bea
(|E_{S,1}| + |B_{S,1}|)(t,x) \lesssim  \int_{C_{t,x}}\int_{\mathbb{R}^{3}} \frac{|B|f(s,x+(t-s)\omega,p)}{(t-s)p_{0}(1+\hat{p}\cdot \omega)^{\frac{1}{2}}}dp d\sigma(\omega)\lesssim \int_{C_{t,x}} \frac{|B|\Phi_{-1}(s,x+(t-s)\omega)}{t-s} d\sigma(\omega)
\eea
But since $|B| \leq |K|$, we finally obtain:
\bea
(|E_{S,1}| + |B_{S,1}|)(t,x) \lesssim \int_{C_{t,x}} \frac{|K|\Phi_{-1}(s,x+(t-s)\omega)}{t-s} d\sigma(\omega)
\eea
which is (\ref{ks2ls}).
Recall that this is precisely the representation formula for the inhomogeneous wave equation of the form:
$$
\square u = |K|\Phi_{-1}; \ u|_{t=0} = \rd_{t}u|_{t=0}=0
$$
Finally, our last term has the following bound from Proposition 3.4 in \cite{L-S}:
\bea
(|E_{S,2}| + |B_{S,2}|)(t,x) \lesssim  \int_{C_{t,x}}\int_{\mathbb{R}^{3}} \frac{|K_{g}|f(s,x+(t-s)\omega)}{(t-s)p_{0}(1+\hat{p}\cdot \omega)} dp d\sigma(\omega)
\eea
where $|K_{g}|^{2} = |E\cdot \omega|^{2} + |B \cdot \omega|^{2} + |E - \omega \times B|^{2} + |B + \omega\times E|^{2}$. Recall the conservation law $\|K_{g}\|_{L^{2}(C_{t,x})} \lesssim 1$ from Proposition 2.2 in \cite{L-S} and use H\"older's inequality to obtain:

\bea
(|E_{S,2}| + |B_{S,2}|)(t,x) \lesssim  \bigg(\int_{C_{t,x}}\bigg(\int_{\mathbb{R}^{3}} \frac{f(s,x+(t-s)\omega)}{(t-s)p_{0}(1+\hat{p}\cdot \omega)} dp\bigg)^{2} d\sigma(\omega)\bigg)^{\frac{1}{2}}
\eea
Finally, using the same change of variables as in (\ref{changekt}), we get (\ref{ks2ls}).
\end{proof}

\section{New Bounds on $|K|$}

From this point in the paper, we adopt the convention that $\rho +$ denotes some appropriate number $\rho + \epsilon$ where $\epsilon > 0$ is very small, $\epsilon \ll 1$. Note that the size of $\epsilon$ may vary depending on the term, but the key point is that $\epsilon$ is appropriately small in each of the estimates below. Similarly, we let $\rho -$ denote some appropriate number $\rho - \epsilon$ for $\epsilon \ll 1$ chosen to be appropriately small.

\begin{prop}\label{newKTest}
Given $1\leq m \leq 3$, $\frac{3}{mq}-\frac{3}{q} > -1$ and $\frac{3m-1}{2m} \leq q \leq \infty$, we have the estimate:
\bea
\|K_{T}\|_{L^{\infty}_{t}L^{mq}_{x}} \lesssim \|\sigma_{-1}\|_{L^{\infty}_{t}L^{q}_{x}}
\eea
\end{prop}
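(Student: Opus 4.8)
The plan is to read this off directly from the two ingredients already in hand: the pointwise bound on $K_T$ in terms of the averaging operator $W_2$, and the operator estimate \eqref{OpEst}. First I would invoke Proposition \ref{FieldEst} (equivalently the bound \eqref{ktls}), which gives
$$|K_T(t,x)| \lesssim W_2(\sigma_{-1})(t,x),$$
so that $\|K_T\|_{L^\infty_t L^{mq}_x} \lesssim \|W_2(\sigma_{-1})\|_{L^\infty_t L^{mq}_x}$. Then I would apply the estimate \eqref{OpEst} with the choices $\alpha = 2$, $r = \infty$, and $h = \sigma_{-1}$, which yields $\|W_2(\sigma_{-1})\|_{L^\infty_t([0,T];L^{mq}_x)} \le C_{T,2}\|\sigma_{-1}\|_{L^\infty_t([0,T];L^q_x)}$, and the proposition follows by composing the two inequalities.

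The only thing to check is that the hypotheses of \eqref{OpEst} are met under the stated assumptions, and in fact they match verbatim once $\alpha = 2$ is substituted. The range $1 \le m \le 3$ is common to both statements; the condition $\frac{3m-1}{2m} \le q \le \infty$, which comes from requiring $(\tfrac1q,\tfrac1{mq})$ to lie in the convex hull of $\{(0,0),(1,1),(\tfrac34,\tfrac14)\}$, is exactly as assumed; and the integrability condition $2-\alpha+\frac{3}{mq}-\frac{3}{q} > -1$ that guarantees $\int_0^t s^{2-\alpha+\frac{3}{mq}-\frac{3}{q}}\,ds$ is finite becomes precisely $\frac{3}{mq}-\frac{3}{q} > -1$ when $\alpha = 2$ (since then $s^{2-\alpha} = 1$). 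Taking $r = \infty$ is permitted because the bound on $\int_0^t s^{2-\alpha+\frac{3}{mq}-\frac{3}{q}}\,ds$ is a continuous, hence bounded, function of $t \in [0,T]$, so the final $L^r_t$ norm over $[0,T]$ is controlled for every $r \in [1,\infty]$, including $r = \infty$.

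There is no substantive obstacle in this argument: the genuine work — the spherical averaging inequality \eqref{average}, the rescaling \eqref{operator1}--\eqref{operator2} that produces the spatial exponent $\frac{3}{mq}-\frac{3}{q}$, and the identification of the admissible range of $q$ — was already carried out in establishing \eqref{OpEst}. The one point deserving a moment's care is simply bookkeeping: confirming that the change of variables in the spatial coordinates indeed contributes the factor $s^{3/(mq)-3/q}$ and that, with $\alpha = 2$, the resulting power of $s$ is integrable near $s = 0$ exactly under the hypothesis $\frac{3}{mq}-\frac{3}{q} > -1$. Once that is noted, the proof is a two-line composition of Proposition \ref{FieldEst} and \eqref{OpEst}.
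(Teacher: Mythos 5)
Your proposal is correct and is exactly the paper's argument: the paper proves this proposition in one line by applying \eqref{OpEst} with $\alpha=2$ to the pointwise bound \eqref{ktls} (Proposition \ref{FieldEst}), and your verification that the hypotheses of \eqref{OpEst} reduce verbatim to those of the proposition when $\alpha=2$ is the only checking required.
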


\begin{proof}
By (\ref{ktls}), we can apply (\ref{OpEst}) for $\alpha =2$ to $|K_{T}|$.
\end{proof}

In particular if $m=2$, then we need $-\frac{3}{2q} > -1$ (or $q>\frac{3}{2}$) and $q \geq \frac{5}{4}$. Hence, we have for $q > 2$:
\bea\label{KTcor}
\|K_{T}\|_{L^{\infty}_{t}L^{4+}_{x}} \lesssim \|\sigma_{-1}\|_{L^{\infty}_{t}L^{2+}_{x}}
\eea

\begin{prop}\label{newKS2est}

Given $1\leq m \leq 3$, $\frac{3}{mq}-\frac{3}{q} > -1$ and $\frac{3m-1}{2m} \leq q \leq \infty$,
\bea
\|K_{S,2}\|_{L^{\infty}_{t}L^{2mq}_{x}} \lesssim \|\sigma_{-1}\|_{L^{\infty}_{t}L^{2q}_{x}}
\eea
\end{prop}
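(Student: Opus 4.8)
The plan is to derive Proposition~\ref{newKS2est} directly from the pointwise bound \eqref{ks2ls} of Proposition~\ref{LSplanedecomp} together with the operator estimate \eqref{OpEst}, mirroring the one-line argument for Proposition~\ref{newKTest} but carrying the extra square and square root through the exponents. First I would invoke \eqref{ks2ls}, which gives $|K_{S,2}| \lesssim \bigl(W_{2}(\sigma_{-1}^{2})\bigr)^{1/2}$ pointwise in $(t,x)$. Taking the $L^{2mq}_{x}$ norm and using that $\bigl\|g^{1/2}\bigr\|_{L^{2mq}_{x}} = \bigl\|g\bigr\|_{L^{mq}_{x}}^{1/2}$ for $g = W_{2}(\sigma_{-1}^{2}) \geq 0$, this reduces the claim to the bound $\bigl\|W_{2}(\sigma_{-1}^{2})\bigr\|_{L^{\infty}_{t}L^{mq}_{x}} \lesssim \bigl\|\sigma_{-1}^{2}\bigr\|_{L^{\infty}_{t}L^{q}_{x}}$.

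Next I would apply \eqref{OpEst} with $\alpha = 2$, $r = \infty$, and $h = \sigma_{-1}^{2}$. The hypotheses of that estimate for $\alpha = 2$ are precisely $1 \leq m \leq 3$, $\frac{3}{mq} - \frac{3}{q} > -1$ (which is $2 - \alpha + \frac{3}{mq} - \frac{3}{q} > -1$ at $\alpha = 2$), and $\frac{3m-1}{2m} \leq q \leq \infty$ --- exactly the hypotheses assumed in Proposition~\ref{newKS2est}. This yields $\bigl\|W_{2}(\sigma_{-1}^{2})\bigr\|_{L^{\infty}_{t}L^{mq}_{x}} \lesssim \bigl\|\sigma_{-1}^{2}\bigr\|_{L^{\infty}_{t}L^{q}_{x}}$, with an implicit constant depending only on $T$.

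Finally I would rewrite $\bigl\|\sigma_{-1}^{2}\bigr\|_{L^{\infty}_{t}L^{q}_{x}} = \bigl\|\sigma_{-1}\bigr\|_{L^{\infty}_{t}L^{2q}_{x}}^{2}$ (pointwise $|\sigma_{-1}^{2}| = |\sigma_{-1}|^{2}$, so the $L^{q}_{x}$ norm of the square is the square of the $L^{2q}_{x}$ norm, and likewise in $t$), and combine the last two displays: $\|K_{S,2}\|_{L^{\infty}_{t}L^{2mq}_{x}} \lesssim \bigl\|W_{2}(\sigma_{-1}^{2})\bigr\|_{L^{\infty}_{t}L^{mq}_{x}}^{1/2} \lesssim \bigl(\bigl\|\sigma_{-1}\bigr\|_{L^{\infty}_{t}L^{2q}_{x}}^{2}\bigr)^{1/2} = \bigl\|\sigma_{-1}\bigr\|_{L^{\infty}_{t}L^{2q}_{x}}$, which is the claimed estimate. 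There is no genuine analytic obstacle here, since all the work is already packaged in \eqref{OpEst} and Proposition~\ref{LSplanedecomp}; the only point requiring care is the bookkeeping of Lebesgue exponents under the square root --- namely that the factor $\tfrac12$ consistently turns $mq \mapsto 2mq$ on the left and $q \mapsto 2q$ on the right, and that the range-of-$q$ conditions in the hypothesis are the ones appropriate to the inner exponent $q$ (the one seen by $\sigma_{-1}^{2}$) rather than to $2q$.
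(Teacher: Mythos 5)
Your proposal is correct and follows exactly the same route as the paper's proof: bound $|K_{S,2}|$ pointwise via \eqref{ks2ls}, convert $\|(W_{2}(\sigma_{-1}^{2}))^{1/2}\|_{L^{2mq}_{x}}$ into $\|W_{2}(\sigma_{-1}^{2})\|_{L^{mq}_{x}}^{1/2}$, apply \eqref{OpEst} with $\alpha=2$ and $h=\sigma_{-1}^{2}$, and identify $\|\sigma_{-1}^{2}\|_{L^{q}_{x}}^{1/2}=\|\sigma_{-1}\|_{L^{2q}_{x}}$. The exponent bookkeeping you flag is exactly the only point of care, and you handle it as the paper does.
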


\begin{proof}
By (\ref{ks2ls}):
\bea
\|K_{S,2}\|_{L^{\infty}_{t}L^{2mq}_{x}} \lesssim \|W_{2}((\sigma_{-1})^{2})^{\frac{1}{2}}\|_{L^{\infty}_{t}L^{2mq}_{x}} =  \|W_{2}((\sigma_{-1})^{2})\|_{L^{\infty}_{t}L^{mq}_{x}}^{\frac{1}{2}}
\eea
We can apply (\ref{OpEst}) now to get:
\bea
\|K_{S,2}\|_{L^{\infty}_{t}L^{2mq}_{x}} \lesssim  \|(\sigma_{-1})^{2}\|_{L^{\infty}_{t}L^{q}_{x}}^{\frac{1}{2}} = \|\sigma_{-1}\|_{L^{\infty}_{t}L^{2q}_{x}}
\eea
\end{proof}

For reasons that will be clear in Section 9, we use Proposition \ref{newKTest} and Proposition \ref{newKS2est} to bound the quantities $\|K_{T}\|_{L^{\infty}_{t}L^{4+}_{x}}$ and $\|K_{S,2}\|_{L^{\infty}_{t}L^{4+}_{x}}$. Using Proposition \ref{newKS2est}, we can compute for $|K_{S,2}|$:
\bea\label{usableKS2}
\|K_{S,2}\|_{L^{\infty}_{t}L^{4+}_{x}}\lesssim \|\sigma_{-1}\|_{L^{\infty}_{t}L^{\frac{12}{5}+}_{x}}
\eea
where we used $m=\frac{5}{3}$ and $q=\frac{6}{5}+$. In particular, setting $q = \frac{6+\epsilon}{5}$ for $\epsilon \ll 1$, we see that $m$ and $q$ satisfy the conditions of Proposition \ref{newKS2est}. The explicit estimate written in (\ref{usableKS2}) is
$$ \|K_{S,2}\|_{L^{\infty}_{t}L^{4+\frac{2\epsilon}{3}}_{x}}\lesssim \|\sigma_{-1}\|_{L^{\infty}_{t}L^{\frac{12}{5}+\frac{2\epsilon}{5}}_{x}}.$$
Similarly, by Proposition \ref{newKTest}, we can compute for $|K_{T}|$:
\bea\label{usableKT}
\|K_{T}\|_{L^{\infty}_{t}L^{4+}_{x}}\lesssim \|\sigma_{-1}\|_{L^{\infty}_{t}L^{\frac{12}{5}+}_{x}}
\eea
where we used $ m = \frac{5}{3}$ and $q = \frac{12}{5}+$. Note that this is not the lowest Lebesgue norm exponent that can be chosen for $\sigma_{-1}$. However, we do not have a better bound in the $K_{S,2}$ estimate, so a better estimate on the $K_{T}$ term is not useful.
\\
\\
Finally, we employ an iteration argument using Strichartz estimates for the inhomogeneous wave equation to gain bounds on $K_{S,1}$. For the remainder of the paper, assume that
\bea\label{assumption}
\|\Phi_{-1}\|_{L^{\infty}_{t}L^{2}_{x}}\lesssim 1
\eea

\begin{prop}
 We have the following bound on $K_{S,1}$ assuming (\ref{assumption}):
\bea\label{newKS1est}
\|K_{S,1}\|_{L^{\infty}_{t}([0,T);L_{x}^{4+})}\lesssim 1 + \|\sigma_{-1}\|_{L_{t}^{\infty}([0,T);L^{\frac{12}{5}+}_{x})}
\eea
\end{prop}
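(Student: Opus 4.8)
The plan is to start from the pointwise bound (\ref{ks1ls}), $|K_{S,1}| \lesssim \square^{-1}(|K|\Phi_{-1})$, and to set $u \eqdef \square^{-1}(|K|\Phi_{-1})$, which by (\ref{wave}) solves $\square u = |K|\Phi_{-1}$ with vanishing Cauchy data, so that the Strichartz machinery of Theorem \ref{Strichartz Estimates} applies with no contribution from initial data on $[0,T)$. I would then feed in the field decomposition $|K| \leq |K_0| + |K_T| + |K_{S,1}| + |K_{S,2}|$, splitting the source into four pieces. The term with $K_0$ depends only on the initial data and, being smooth with compact $x$-support by finite speed of propagation, contributes the harmless constant $1$. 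The terms with $K_T$ and $K_{S,2}$ I would control through the operator bounds (\ref{usableKT})--(\ref{usableKS2}), i.e. $\|K_T\|_{L^\infty_t L^{4+}_x} + \|K_{S,2}\|_{L^\infty_t L^{4+}_x} \lesssim \|\sigma_{-1}\|_{L^\infty_t L^{\frac{12}{5}+}_x}$, after a H\"older split that pairs these with the factor $\Phi_{-1}$, bounded in $L^2_x$ by the standing assumption (\ref{assumption}); here I would use that $f$, and therefore $\sigma_{-1}$, $\Phi_{-1}$, $K_T$, $K_{S,2}$, have compact support in $x$ uniformly on $[0,T)$, which permits moving between nearby Lebesgue exponents at the cost of a $T$-dependent constant.

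For the output norm I would apply Theorem \ref{Strichartz Estimates} with the exponent $\lambda$ chosen just above $3/4$, so that the homogeneous Sobolev control $\|u\|_{L^\infty_t \dot{H}^\lambda_x}$ produced by the estimate embeds into the desired $\|u\|_{L^\infty_t L^{4+}_x}$ (in $\mathbb{R}^3$ one has $\dot{H}^{3/4} \hookrightarrow L^4$). The dual side of Strichartz then demands the source in a norm $L^{\frac{2}{1+\lambda}}_t L^{\frac{2}{2-\lambda}}_x$, and the delicate point is matching this with the H\"older splitting of $|K|\Phi_{-1}$ given that $\Phi_{-1}$ is only directly controlled in $L^2_x$. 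This is where I expect to need the improved integrability of $\Phi_{-1}$ relative to $\sigma_{-1}$ coming from its weaker singularity, the $(1+\hat{p}\cdot\omega)^{-1/2}$ in (\ref{phimoment}) in place of $(1+\hat{p}\cdot\omega)^{-1}$, together with the compact $p$-support, to squeeze out the extra spatial integrability for $\Phi_{-1}$ needed to close the chain of exponents.

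The self-referential piece $\square^{-1}(|K_{S,1}|\Phi_{-1})$ I would handle exactly as in the proof of (\ref{KSmoment}): choose a partition $0 = T_0 < T_1 < \cdots < T_k = T$ of $[0,T)$ with each $\|\Phi_{-1}\|_{L^2_t([T_i,T_{i+1}];L^2_x)}$ small enough (possible by (\ref{assumption}), e.g. by forcing each subinterval length to be small relative to the Strichartz constant) that after applying Strichartz and H\"older on $[T_i,T_{i+1}]$ the coefficient in front of $\|K_{S,1}\|_{L^\infty_t([T_i,T_{i+1}];L^{4+}_x)}$ is strictly less than $1$, so that this term can be absorbed. Iterating across the partition, the Cauchy-data terms appearing on each $[T_i,T_{i+1}]$ are controlled by the $\dot{H}^\lambda$-norms of $u$ and $\partial_t u$ at $T_i$, which are themselves estimated from the previous subinterval by an induction whose base case is the vanishing data at $t = 0$; summing over $i$ with the triangle inequality then gives the stated bound $1 + \|\sigma_{-1}\|_{L^\infty_t([0,T);L^{\frac{12}{5}+}_x)}$.

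The main obstacle I anticipate is the interlocking of exponents in the second step: one must simultaneously (i) land the output in $L^\infty_t L^{4+}_x$, which essentially pins $\lambda$ to $3/4$; (ii) fit $|K|\Phi_{-1}$ into the Strichartz source space $L^{\frac{2}{1+\lambda}}_t L^{\frac{2}{2-\lambda}}_x$ using only $K_T, K_{S,2} \in L^{4+}_x$ (which is the best reachable from (\ref{usableKT})--(\ref{usableKS2}) with the $L^{\frac{12}{5}+}$ norm of $\sigma_{-1}$) and the $L^2_x$ bound on $\Phi_{-1}$, a pairing which naively falls short in the spatial Lebesgue exponent; and (iii) keep the $K_{S,1}$ self-interaction genuinely absorbable subinterval by subinterval. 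Reconciling (i)--(iii) is the crux, and I expect it to rest on the compact support of $f$ in both $x$ and $p$ and on the sharper bounds available for $\Phi_{-1}$.
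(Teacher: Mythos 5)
Your architecture coincides with the paper's: the decomposition $|K|\le|K_0|+|K_T|+|K_{S,1}|+|K_{S,2}|$ inside $\square^{-1}(|K|\Phi_{-1})$, H\"older against $\|\Phi_{-1}\|_{L^2_tL^2_x}$ from (\ref{assumption}), the bounds (\ref{usableKT})--(\ref{usableKS2}) for the $K_T$ and $K_{S,2}$ sources, and the partition--absorption--iteration scheme for the self-referential piece are all exactly what the paper does. The gap is in the step you yourself flag as the crux, and it is not repairable along the route you propose. Choosing $\lambda=\frac34+$ so as to exit through $\|u\|_{L^\infty_t\dot H^\lambda}$ and Sobolev embedding forces the dual Strichartz source space $L^{\frac{2}{1+\lambda}}_tL^{\frac{2}{2-\lambda}}_x=L^{\frac87-}_tL^{\frac85+}_x$, whereas the H\"older pairing of $K_T,K_{S,2}\in L^{4+}_x$ with $\Phi_{-1}\in L^2_x$ only places the source in $L^{\frac43+}_x$. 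Compact support in $x$ moves you from \emph{higher} to \emph{lower} Lebesgue exponents, so it cannot bridge $\frac43\to\frac85$; and the weaker singularity of $\Phi_{-1}$ does not help either, since (\ref{assumption}) is all the control on $\Phi_{-1}$ the theorem grants. If instead you force the pairing to close at $\lambda=\frac34$ by upgrading the fields, you need $K_T,K_{S,2}\in L^8_x$, which by Propositions \ref{newKTest} and \ref{newKS2est} costs $\|\sigma_{-1}\|_{L^\infty_tL^4_x}$ rather than $\|\sigma_{-1}\|_{L^\infty_tL^{\frac{12}{5}+}_x}$ on the right-hand side, and that destroys the exponent $\frac{18}{5r}-1$ that Theorem \ref{main3} is built on.

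The resolution in the paper is to abandon the $\dot H^\lambda$/Sobolev exit entirely and take $\lambda=\gamma=\frac12+$, reading the output off the mixed-norm component $L^{\frac{2}{\gamma}}_tL^{\frac{2}{1-\gamma}}_x=L^{4-}_tL^{4+}_x$ of Theorem \ref{Strichartz Estimates}. Then the dual source space is $L^{\frac{2}{1+\gamma}}_tL^{\frac{2}{2-\gamma}}_x=L^{\frac43-}_tL^{\frac43+}_x$, and since $\frac14+\frac12=\frac34$ in space and $\frac{\gamma}{2}+\frac12=\frac{1+\gamma}{2}$ in time, the H\"older product $L^{4+}_x\cdot L^2_x$ lands \emph{exactly} there; the $\dot H^\gamma$ components of the Strichartz estimate are used only for the Cauchy-data bookkeeping at the partition points $T_j$. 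The price is that what is actually proven is a time-integrated bound $L^{4-}_tL^{4+}_x$ rather than the $L^\infty_t L^{4+}_x$ written in the statement — your instinct to chase the literal $L^\infty_t$ is what pushed you to $\lambda=\frac34$ — but only $\|K\|_{L^1_tL^{4+}_x}$ is used downstream in Section 9, so the integrated version suffices.
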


\begin{proof}
For $\gamma \in (0,1)$, we obtain by (\ref{ks1ls}) for some interval $[a,b]\subset [0,T)$:
\bea
\|K_{S,1}\|_{L^{\frac{2}{\gamma}}_{t}L^{\frac{2}{1-\gamma}}_{x}([a,b]\times \mathbb{R}^{3})} \lesssim \|\square^{-1}(|K|\Phi_{-1})\|_{L^{\frac{2}{\gamma}}_{t}L^{\frac{2}{1-\gamma}}_{x}([a,b]\times \mathbb{R}^{3})}
\eea
(Note that we will set $\gamma = \frac{1}{2}+$ later in the proof.)
Applying the triangle inequality to the decomposition $|K| \leq |K_{0}| + |K_{T}| + |K_{S,1}| + |K_{S,2}|$ and extending the interval $[a,b]$ to $[0,T)$ on certain terms, we obtain:
\begin{multline}
\|K_{S,1}\|_{L^{\frac{2}{\gamma}}_{t}L^{\frac{2}{1-\gamma}}_{x}([a,b]\times \mathbb{R}^{3})} 
\\
\leq \|\square^{-1}(|K_{0}|\Phi_{-1})\|_{L^{\frac{2}{\gamma}}_{t}L^{\frac{2}{1-\gamma}}_{x}([0,T)\times \mathbb{R}^{3})} +\|\square^{-1}(|K_{T}|\Phi_{-1})\|_{L^{\frac{2}{\gamma}}_{t}L^{\frac{2}{1-\gamma}}_{x}([0,T)\times \mathbb{R}^{3})} 
\\
+\|\square^{-1}(|K_{S,2}|\Phi_{-1})\|_{L^{\frac{2}{\gamma}}_{t}L^{\frac{2}{1-\gamma}}_{x}([0,T)\times \mathbb{R}^{3})} +\|\square^{-1}(|K_{S,1}|\Phi_{-1})\|_{L^{\frac{2}{\gamma}}_{t}L^{\frac{2}{1-\gamma}}_{x}([a,b]\times \mathbb{R}^{3})}
\end{multline}

Note that we now replace the $\lesssim$ symbol with some explicit constant $\tilde{C}$. From here, since $\square^{-1}$ is the solution operator to the inhomogeneous wave equation on the interval $[0,T)$ with zero initial data as expressed in (\ref{wave}), we know from Theorem \ref{Strichartz Estimates} that
\bea\label{KTstrichartz}
\|\square^{-1}(|K_{T}|\Phi_{-1})\|_{L^{\frac{2}{\gamma}}_{t}L^{\frac{2}{1-\gamma}}_{x}([0,T)\times \mathbb{R}^{3})} \leq C_{\gamma} \||K_{T}|\Phi_{-1}\|_{L^{\frac{2}{1+\gamma}}_{t}L^{\frac{2}{2-\gamma}}_{x}([0,T)\times \mathbb{R}^{3})}
\eea
and similarly for $|K_{0}|$ and $|K_{S,2}|$. Next, since  $\|\Phi_{-1}\|_{L^{2}_{t}L^{2}_{x}([0,T)\times \mathbb{R}^{3})}\lesssim 1$ by (\ref{assumption}) and $\frac{2}{1-\gamma} = 4+$ by the assumption that $\gamma = \frac{1}{2}+$, we can apply H\"older's inequality and (\ref{usableKT}) to (\ref{KTstrichartz}) to get:
\begin{align*}
\||K_{T}|\Phi_{-1}\|_{L^{\frac{2}{1+\gamma}}_{t}L^{\frac{2}{2-\gamma}}_{x}([0,T)\times \mathbb{R}^{3})} & \leq \||K_{T}|\|_{L^{\frac{2}{\gamma}}_{t}L^{\frac{2}{1-\gamma}}_{x}([0,T)\times \mathbb{R}^{3})}\|\Phi_{-1}\|_{L^{2}_{t}L^{2}_{x}([0,T)\times \mathbb{R}^{3})}
\\
& \lesssim\||K_{T}|\|_{L^{\frac{2}{\gamma}}_{t}L^{4+}_{x}([0,T)\times \mathbb{R}^{3})} 
\\
&\lesssim \|\sigma_{-1}\|_{L^{\infty}_{t}L^{\frac{12}{5}+}_{x}}
\end{align*}

We obtain the same bound for the $|K_{S,2}|$ term. The $|K_{0}|$ term can be bounded by a constant since $|K_{0}|$ depends only on the initial data of the system. Summarizing, there exists a constant $C$:
\bea\label{useful}
\|K_{S,1}\|_{L^{\frac{2}{\gamma}}_{t}L^{\frac{2}{1-\gamma}}_{x}([a,b]\times \mathbb{R}^{3})} \leq CC_{\gamma}(1+\|\sigma_{-1}\|_{L^{\infty}_{t}L^{\frac{12}{5}+}_{x}}) +C\|\square^{-1}(|K_{S,1}|\Phi_{-1})\|_{L^{\frac{2}{\gamma}}_{t}L^{\frac{2}{1-\gamma}}_{x}([a,b]\times \mathbb{R}^{3})}
\eea

Now, let us set $u = \square^{-1}(|K_{S,1}|\Phi_{-1})$ for convenience of notation. Then, by Stichartz estimates and H\"older's inequality, we have the following fact:
\begin{multline}\label{Strichartzfact1}
\|u\|_{L^{\frac{2}{\gamma}}_{t}L^{\frac{2}{1-\gamma}}_{x}([a,b]\times \mathbb{R}^{3})} \leq C_{\gamma}\Big(\|u(a)\|_{\dot{H}^{\gamma}_{x}(\mathbb{R}^{3})} + \|\partial_{t}u(a)\|_{\dot{H}^{\gamma-1}_{x}(\mathbb{R}^{3})} 
\\ + \||K_{S,1}|\|_{L^{\frac{2}{\gamma}}_{t}L^{\frac{2}{1-\gamma}}_{x}([0,T)\times \mathbb{R}^{3})}\|\Phi_{-1}\|_{L^{2}_{t}L^{2}_{x}([0,T)\times \mathbb{R}^{3})}\Big).
\end{multline}

Next, due to (\ref{assumption}), we can choose a partition $0=T_{0}< T_{1}<T_{2}<\ldots<T_{N}=T$ of the interval $[0,T]$ such that:
\bea\label{partitionphi}
\|\Phi_{-1}\|_{L^{2}_{t}L^{2}_{x}([T_{j},T_{j+1}]\times \mathbb{R}^{3})} \leq \frac{1}{2CC_{\gamma}}
\eea
for $j=0,1,\ldots , N-1$.

Hence, by (\ref{Strichartzfact1}) and (\ref{useful}), we obtain:
\begin{multline}
\|K_{S,1}\|_{L^{\frac{2}{\gamma}}_{t}L^{\frac{2}{1-\gamma}}_{x}([T_{j},T_{j+1}]\times \mathbb{R}^{3})} \leq CC_{\gamma}(1+\|\sigma_{-1}\|_{L^{\infty}_{t}L^{\frac{12}{5}+}_{x}} + \|u(T_{j})\|_{\dot{H}^{\gamma}_{x}(\mathbb{R}^{3})} + \|\partial_{t}u(T_{j})\|_{\dot{H}^{\gamma-1}_{x}(\mathbb{R}^{3})})
\\
+\frac{1}{2}\||K_{S,1}|\|_{L^{\frac{2}{\gamma}}_{t}L^{\frac{2}{1-\gamma}}_{x}([T_{j},T_{j+1}]\times \mathbb{R}^{3})}.
\end{multline}
This implies that for any $j = 0,1,\ldots N-1$, we have the inequality:
\bea\label{iterative1}
\|K_{S,1}\|_{L^{\frac{2}{\gamma}}_{t}L^{\frac{2}{1-\gamma}}_{x}([T_{j},T_{j+1}]\times \mathbb{R}^{3})} \leq 2CC_{\gamma}(1+\|\sigma_{-1}\|_{L^{\infty}_{t}L^{\frac{12}{5}+}_{x}} +  \|u(T_{j})\|_{\dot{H}^{\gamma}_{x}(\mathbb{R}^{3})} + \|\partial_{t}u(T_{j})\|_{\dot{H}^{\gamma-1}_{x}(\mathbb{R}^{3})}).
\eea

Using Strichartz estimates again, we obtain the bound:
\begin{multline}\label{Strichartzfact2}
\|u(T_{j})\|_{\dot{H}^{\gamma}_{x}(\mathbb{R}^{3})} + \|\partial_{t}u(T_{j})\|_{\dot{H}^{\gamma-1}_{x}(\mathbb{R}^{3})} \leq C_{\gamma} \Big(\|u(T_{j-1})\|_{\dot{H}^{\gamma}_{x}(\mathbb{R}^{3})} 
+ \|\partial_{t}u(T_{j-1})\|_{\dot{H}^{\gamma-1}_{x}(\mathbb{R}^{3})} 
\\+ \||K_{S,1}|\|_{L^{\frac{2}{\gamma}}_{t}L^{\frac{2}{1-\gamma}}_{x}([T_{j-1},T_{j})\times \mathbb{R}^{3})}\|\Phi_{-1}\|_{L^{2}_{t}L^{2}_{x}([T_{j-1},T_{j})\times \mathbb{R}^{3})}\Big).
\end{multline}

We now apply H\"older's inequality and the bound (\ref{partitionphi}) to (\ref{Strichartzfact2}) to get that:
\begin{multline}
\|u(T_{j})\|_{\dot{H}^{\gamma}_{x}(\mathbb{R}^{3})} + \|\partial_{t}u(T_{j})\|_{\dot{H}^{\gamma-1}_{x}(\mathbb{R}^{3})}  
\\
\leq C_{\gamma} \Big(\|u(T_{j-1})\|_{\dot{H}^{\gamma}_{x}(\mathbb{R}^{3})} 
+ \|\partial_{t}u(T_{j-1})\|_{\dot{H}^{\gamma-1}_{x}(\mathbb{R}^{3})} 
+ \frac{1}{2CC_{\gamma}}\||K_{S,1}|\|_{L^{\frac{2}{\gamma}}_{t}L^{\frac{2}{1-\gamma}}_{x}([T_{j-1},T_{j})\times \mathbb{R}^{3})}\Big).
\end{multline}

Next, by the estimate (\ref{iterative1}), we obtain that:
\begin{multline}
\|u(T_{j})\|_{\dot{H}^{\gamma}_{x}(\mathbb{R}^{3})} + \|\partial_{t}u(T_{j})\|_{\dot{H}^{\gamma-1}_{x}(\mathbb{R}^{3})}  
\\
\leq C_{\gamma} \Big(\|u(T_{j-1})\|_{\dot{H}^{\gamma}_{x}(\mathbb{R}^{3})} 
+ \|\partial_{t}u(T_{j-1})\|_{\dot{H}^{\gamma-1}_{x}(\mathbb{R}^{3})}\Big) 
+ \frac{1}{2C}\Big( 2CC_{\gamma} (1+\|\sigma_{-1}\|_{L^{\infty}_{t}L^{\frac{12}{5}+}_{x}}   
\\
+\|u(T_{j-1})\|_{\dot{H}^{\gamma}_{x}(\mathbb{R}^{3})} + \|\partial_{t}u(T_{j-1})\|_{\dot{H}^{\gamma-1}_{x}(\mathbb{R}^{3})}) \Big).
\end{multline}

Finally, it follows that:
\begin{multline}\label{iterative3}
\|u(T_{j})\|_{\dot{H}^{\gamma}_{x}(\mathbb{R}^{3})} + \|\partial_{t}u(T_{j})\|_{\dot{H}^{\gamma-1}_{x}(\mathbb{R}^{3})}  
\\
\leq 2C_{\gamma} \Big(\|u(T_{j-1})\|_{\dot{H}^{\gamma}_{x}(\mathbb{R}^{3})} 
+ \|\partial_{t}u(T_{j-1})\|_{\dot{H}^{\gamma-1}_{x}(\mathbb{R}^{3})}\Big) 
+  C_{\gamma} (1+\|\sigma_{-1}\|_{L^{\infty}_{t}L^{\frac{12}{5}+}_{x}}).
\end{multline}

Notice that $u(0) = \partial_{t}u(0)= 0$. Thus, performing an iteration of the above estimate (\ref{iterative3}), we obtain for any $k \in \{0,1,\ldots,N-1\}$:

$$\|u(T_{k})\|_{\dot{H}^{\gamma}_{x}(\mathbb{R}^{3})} + \|\partial_{t}u(T_{k})\|_{\dot{H}^{\gamma-1}_{x}(\mathbb{R}^{3})} \leq \sum_{j=0}^{k-1}(2C_{\gamma})^{k-1-j}(1+\|\sigma_{-1}\|_{L^{\infty}_{t}L^{\frac{12}{5}+}_{x}}).$$
Hence by (\ref{iterative1}), it follows that:
\bea\label{Kjest}
\|K_{S,1}\|_{L^{\frac{2}{\gamma}}_{t}L^{\frac{2}{1-\gamma}}_{x}([T_{k},T_{k+1}]\times \mathbb{R}^{3})} \leq 2CC_{\gamma}\Big(1+\|\sigma_{-1}\|_{L^{\infty}_{t}L^{\frac{12}{5}+}_{x}} +  \sum_{j=0}^{k-1}(2C_{\gamma})^{k-1-j}(1+\|\sigma_{-1}\|_{L^{\infty}_{t}L^{\frac{12}{5}+}_{x}})\Big).
\eea

Using the triangle inequality and summing (\ref{Kjest}) over $k = 0, 1, \ldots ,N-1$ and noting that $N$ is some finite positive integer depending on $\|\Phi_{-1}\|_{L^{\infty}_{t}L^{2}_{x}}$, we get
\begin{align*}
\|K_{S,1}\|_{L^{\frac{2}{\gamma}}_{t}L^{\frac{2}{1-\gamma}}_{x}([0,T)\times \mathbb{R}^{3})} & \leq \sum\limits_{k=0}^{N-1}2CC_{\gamma}\Big(1+\|\sigma_{-1}\|_{L^{\infty}_{t}L^{\frac{12}{5}+}_{x}} +  \sum_{j=0}^{k-1}(2C_{\gamma})^{k-1-j}(1+\|\sigma_{-1}\|_{L^{\infty}_{t}L^{\frac{12}{5}+}_{x}})\Big) \\
& \lesssim 1 + \|\sigma_{-1}\|_{L^{\infty}_{t}L^{\frac{12}{5}+}_{x}}.
\end{align*}
Since $\frac{2}{1-\gamma} = 4+$, we obtain the desired estimate (\ref{newKS1est}).
\end{proof}

\section{Pallard's Decomposition and Bounding P(T)}

In this section, we first recall the decomposition method in \cite{refinedPallard} and then apply the above estimates to gain a bound on the size of the momentum support of $f$, which we will denote by:
\bea\label{mometumsupportfunction}
P(T) \eqdef 1 + \sup\{p\in\mathbb{R}^{3}|\exists (t,x) \in [0,T)\times\mathbb{R}^{3} \text{such that} f(t,x,p)\neq 0\}
\eea
By the method of characteristics:
\bea
\frac{dV}{ds}(s;t,x,p')= E(s,X(s;t,x,p'))+\hat{V}(s;t,x,p')\times B(s,X(s;t,x,p'))
\eea

Taking the Eucliean inner product with $\hat{V}(s;t,x,p')$ on both sides and then integrating in time, we obtain:
\bea
\sqrt{1+|V(s;t,x,p')|^{2}} = \sqrt{1+|V(0;t,x,p')|^{2}} + \int_{0}^{T}E(s,X(s;t,x,p'))\cdot \hat{V}(s;t,x,p') ds
\eea

First, for $i=1,2,3$ and $K_{j} = E_{j} + (\hat{p} \times B)_{j}$, we can decompose the electric field:
\begin{multline}\label{EdecompPallard}
E_{i}(t,x)= E^{(0)}_{i}(x) + \int_{\mathbb{R}^{3}}{(\frac{(1-|\hat{p}|^{2})(x_{i}-t\hat{p})}{(t-\hat{p}\cdot x)^{2}})Y \star_{t,x} (f\chi_{t\geq 0}) dp }
\\
- \sum_{j=1}^{3}\int_{\mathbb{R}^{3}}{(\frac{[t(t-\hat{p}\cdot x)(\hat{p}_{i}\hat{p}_{j}-e_{i})+(x_{i}-t\hat{p}_{i})(x_{j}-(\hat{p}\cdot x)\hat{p}_{j})]}{p_{0}(t-\hat{p}\cdot x)^{2}})\star_{t,x} (K_{j}f\chi_{t\geq 0}) dp}
\\
\eqdef E^{(0)}_{i}(x) + F_{i}(t,x) + G_{i}(t,x)
\end{multline}
where $e_{i}$ is the unit vector with all entries equal to $0$ except for the i$^{th}$ entry which is equal to $1$. Also, the double convolution $\star_{t,x}$ is a binary operation defined by:
\bea
f_{1}\star_{t,x}f_{2} = \int_{\mathbb{R}\times\mathbb{R}^{3}}{f_{1}(t-s,x-y) f_{2}(s,y) \ ds \ dy}
\eea
and
\bea
Y \eqdef (4\pi t)^{-1}\delta_{|x|=t}
\eea

Following the scheme of \cite{refinedPallard}, we can decompose the characteristic integral of the electric field into:
\bea\label{Pallard1}
\int_{0}^{T}E(s,X(s;t,x,p'))\cdot \hat{V}(s;t,x,p') ds = I_{0} + I_{F} + I_{G}
\eea
where $I_{0}$ depends only on the initial data term $E^{(0)}$ and
\bea\label{IF}
I_{F} \eqdef \int_{0}^{T}F(s,X(s;t,x,p'))\cdot \hat{V}(s;t,x,p') ds
\eea
and 
\bea\label{IG}
I_{G} \eqdef \int_{0}^{T}G(s,X(s;t,x,p'))\cdot \hat{V}(s;t,x,p') ds
\eea

Pallard then bounds $I_{G}$ by:
\bea\label{IG1}
|I_{G}| \lesssim \int_{0}^{T} \int_{s}^{T} \int_{|y|=t-s}\int_{\mathbb{R}^{3}_{p}} \frac{(f|K|)(s,X(t)-y,p)}{p_{0}(1-\hat{p}\cdot \omega)}\Big(\sqrt{1-\hat{V}(t)\cdot\omega}\Big) dp\frac{d\sigma(y) dt}{4\pi|t-s|}ds
\eea

From here, Pallard \cite{refinedPallard} bounds the integral $$\int_{\mathbb{R}^{3}_{p}} \frac{(f|K|)(s,X(t)-y,p)}{p_{0}(1-\hat{p}\cdot \omega)} dp$$ using the term $ m(t,x) \eqdef \int_{\mathbb{R}^{3}}p_{0}f(t,x,p) dp$. Instead, we preserve the singularity in the denominator:
\bea\label{newpallard}
|I_{G}| \lesssim \int_{0}^{T} \int_{s}^{T} \int_{|y|=t-s} (\sigma_{-1}|K|)(s,X(t)-y)\Big(\sqrt{1-\hat{V}(t)\cdot\omega}\Big) \frac{d\sigma(y) dt}{4\pi|t-s|}ds
\eea

Split the integral into $I_{G} \lesssim I^{'}_{G} + I^{''}_{G}$ as follows:
\begin{multline}\label{newpallard2}
|I_{G}| \lesssim \int_{0}^{T} \int_{s}^{s+\epsilon(s)} \int_{|y|=t-s} (\sigma_{-1}|K|)(s,X(t)-y)\Big(\sqrt{1-\hat{V}(t)\cdot\omega}\Big) \frac{d\sigma(y) dt}{4\pi|t-s|}ds \ \\ + \int_{0}^{T} \int_{s+\epsilon(s)}^{T} \int_{|y|=t-s} (\sigma_{-1}|K|)(s,X(t)-y)\Big(\sqrt{1-\hat{V}(t)\cdot\omega}\Big) \frac{d\sigma(y) dt}{4\pi|t-s|}ds
\end{multline}

for 
\bea\label{epsilon}
\epsilon(s) = \frac{T-s}{1 + P(s)^{8}}
\eea
Note that the power of $P(s)$ in  (\ref{epsilon}) is useful for bounding $I^{'}_{G}$ as in \cite{refinedPallard}. First, let us bound $I^{''}_{G}$. By computing using H\"older's inequality as in \cite{refinedPallard}:
\begin{multline}\label{IG''}
|I^{''}_{G}| \lesssim \int_{0}^{T} \Big|\int_{s+\epsilon(s)}^{T} \int_{|y|=t-s} (\sigma_{-1}|K|)^{\frac{3}{2}}(s,X(t)-y)(1-\hat{V}(t)\cdot\omega) d\sigma(y) dt\Big|^{\frac{2}{3}} \\ \times \Big(\int_{s+\epsilon(s)}^{T}\int_{|y|=t-s}((1-\hat{V}(t)\cdot\omega)^{-\frac{1}{6}})^{3}d\sigma dt\Big)^{\frac{1}{3}} ds
\\
\lesssim \int_{0}^{T} \Big|\int_{s+\epsilon(s)}^{T} \int_{|y|=t-s} (\sigma_{-1}|K|)^{\frac{3}{2}}(s,X(t)-y)(1-\hat{V}(t)\cdot\omega) d\sigma(y) dt\Big|^{\frac{2}{3}} \ln^{\frac{1}{3}}\Big(\frac{T-s}{\epsilon(s)}\Big) ds
\end{multline}

Setting $\omega = \omega(\theta,\phi) = (\sin\theta \cos\phi,\sin\theta\sin\phi,\cos\theta)$:
\begin{multline}\label{114}
\int_{s+\epsilon(s)}^{T} \int_{|y|=t-s} (\sigma_{-1}|K|)^{\frac{3}{2}}(s,X(t)-y)(1-\hat{V}(t)\cdot\omega) d\sigma(y) dt
\\
 = \int_{s+\epsilon(s)}^{T} \int_{|y|=t-s} (\sigma_{-1}|K|)^{\frac{3}{2}}(s,X(t)-(t-s)\omega(\theta,\phi))(1-\hat{V}(t)\cdot\omega(\theta,\phi)) (t-s)^{2} \sin\theta d\theta d\phi dt
\end{multline}

Consider the change of variables $\Psi : (s_{1},s_{2})\times(0,\pi)\times(0,2\pi) \rightarrow \Psi((s_{1},s_{2})\times(0,\pi)\times(0,2\pi))$ mapping $$(t,\theta,\phi) \mapsto X(t) - (t-s)\omega(\theta,\phi) \eqdef z$$

The Jacobian of this map is $J = (\hat{V}(t)\cdot\omega - 1) (t-s)^{2}\sin\theta$. Applying this change of variables to (\ref{114}) and inserting our choice of $\epsilon(s)$, we obtain:

\bea\label{IG''part2}
|I^{''}_{G}| \lesssim \int_{0}^{T} \Big|\int_{\Psi((s_{1},s_{2})\times(0,\pi)\times(0,2\pi))} (\sigma_{-1}|K|)^{\frac{3}{2}}(s,z) dz dt\Big| \ln^{\frac{1}{3}}\Big(1 + P(s) \Big) ds
\eea

Following \cite{refinedPallard} precisely, we also know that $I^{'}_{G}\lesssim 1$. (This is done through first applying H\"older's inequality to isolate the first term and then using conservation law $\|K\|_{L^{\infty}_{t}L^{2}_{x}}\lesssim 1$. Finally, by the definition of $\epsilon(s)$, the leftover integral is bounded.) Thus, we arrive at the estimate:
\bea\label{newIGest}
|I_{G}| \lesssim 1 + \|\sigma_{-1}|K|\ln^{\frac{1}{3}}(1+P(t))\|_{L^{1}_{t}L^{\frac{3}{2}}_{x} ([0,T]\times \mathbb{R}^{3})}
\eea

Next, we recognize that $F$ is equivalent to our $E_{T}$ term as expressed in (\ref{ET}). Thus, using the proof of Proposition \ref{Jacobian}:

\bea\label{IFest}
|I_{F}| \lesssim \|\sigma_{-1}\|_{L^{\infty}_{t}L^{3+}_{x}}
\eea

In conclusion:

\begin{prop}\label{finalPTest}
By (\ref{newIGest}) and (\ref{IFest}), we have the following bound for $P(T)$:

\bea\label{finalPT}
|P(T)| \lesssim 1 + \|\sigma_{-1}\|_{L^{\infty}_{t}L^{3+}_{x}} + \|\sigma_{-1}|K|\ln^{\frac{1}{3}}(1+P(t))\|_{L^{1}_{t}L^{\frac{3}{2}}_{x} ([0,T]\times \mathbb{R}^{3})}
\eea
\end{prop}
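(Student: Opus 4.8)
The plan is to reduce the bound on $P(T)$ to a bound on a single scalar integral of the electric field along characteristics, split that integral via Pallard's version of the Glassey--Strauss representation, and estimate the three resulting pieces separately, the last of which is the only delicate one.

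\textbf{Step 1 (reduction to the electric field along characteristics).} Starting from the characteristic equation \eqref{char2} and taking the Euclidean inner product with $\hat V(s;t,x,p')$, the magnetic contribution vanishes since $(\hat V\times B)\cdot\hat V=0$; integrating in time and writing $X(s)=X(s;t,x,p')$, $V(s)=V(s;t,x,p')$ gives
\[
\sqrt{1+|V(s)|^{2}}=\sqrt{1+|V(0)|^{2}}+\int_{0}^{s}E(\tau,X(\tau))\cdot\hat V(\tau)\,d\tau .
\]
Taking the supremum over $s\le T$ and over $(t,x,p')$, using the compact support of $f_{0}$, and recalling \eqref{mometumsupportfunction}, it suffices to bound $\sup_{(t,x,p')}\big|\int_{0}^{T}E(s,X(s))\cdot\hat V(s)\,ds\big|$ by the right-hand side of \eqref{finalPT}.

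\textbf{Step 2 (Pallard decomposition and the easy pieces).} Insert the representation \eqref{EdecompPallard}, $E=E^{(0)}+F+G$, and split the characteristic integral as $I_{0}+I_{F}+I_{G}$ according to \eqref{Pallard1}. The piece $I_{0}$ depends only on $E^{(0)}$, which is determined by the $H^{5}$ initial data, so $|I_{0}|\lesssim 1$. The piece $F$ is identified with the transport field $E_{T}$ of \eqref{ET}, so the change-of-variables argument in the proof of Proposition \ref{Jacobian} (the diffeomorphism $\pi=X(s)+(s-\tilde s)\omega$, H\"older in the angular variables, and the bounded $\theta$-integral valid for $q>2$ and $q>3/(3-i)$ with $i=2$) applies verbatim and yields $|I_{F}|\lesssim\|\sigma_{-1}\|_{L^{\infty}_{t}L^{3+}_{x}}$, that is, \eqref{IFest}.

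\textbf{Step 3 (the term $I_{G}$: the main obstacle).} Following \cite{refinedPallard}, reduce $I_{G}$ to the cone integral \eqref{IG1}; the crucial departure is to \emph{keep} the angular singularity rather than bounding the momentum integral by $m(t,x)=\int p_{0}f\,dp$, so that $\sigma_{-1}|K|$ appears, giving \eqref{newpallard}. Then split the time integral at $\epsilon(s)=(T-s)/(1+P(s)^{8})$ into $I_{G}\lesssim I_{G}'+I_{G}''$ as in \eqref{newpallard2}. For $I_{G}'$, H\"older's inequality isolates a factor $\|K\|_{L^{\infty}_{t}L^{2}_{x}}\lesssim 1$ (conservation law \eqref{conserve1}), and the remaining integral is controlled by the definition of $\epsilon(s)$, so $|I_{G}'|\lesssim 1$, exactly as in \cite{refinedPallard}. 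For $I_{G}''$, H\"older in the angular and time variables with exponents $3/2$ and $3$ produces the logarithmic factor $\ln^{1/3}\big((T-s)/\epsilon(s)\big)=\ln^{1/3}(1+P(s)^{8})\lesssim\ln^{1/3}(1+P(s))$, while in the $3/2$-factor one applies the change of variables $\Psi:(t,\theta,\phi)\mapsto X(t)-(t-s)\omega(\theta,\phi)$, whose Jacobian $J=(\hat V(t)\cdot\omega-1)(t-s)^{2}\sin\theta$ cancels exactly the factor $(1-\hat V(t)\cdot\omega)(t-s)^{2}\sin\theta$ appearing in the integrand; the angular integral then collapses to an $L^{3/2}_{z}$ norm of $\sigma_{-1}|K|$ over the image of $\Psi$, which we enlarge to $\mathbb{R}^{3}$ to remove the dependence on $(t,x,p')$. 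This gives \eqref{newIGest}. Combining, $P(T)\lesssim 1+|I_{0}|+|I_{F}|+|I_{G}|$ is bounded by $1+\|\sigma_{-1}\|_{L^{\infty}_{t}L^{3+}_{x}}+\|\sigma_{-1}|K|\ln^{1/3}(1+P(t))\|_{L^{1}_{t}L^{3/2}_{x}([0,T]\times\mathbb{R}^{3})}$, which is \eqref{finalPT}. The genuinely delicate point is Step 3: one must choose the power of $P(s)$ in $\epsilon(s)$ large enough that the near-diagonal piece $I_{G}'$ closes, yet small enough that the logarithmic loss in $I_{G}''$ stays sublinear at rate $\ln^{1/3}$, and one must check that $\Psi$ is a bona fide diffeomorphism on the relevant $(\theta,\phi)$-range, which uses $|\hat V(t)|<1$ so that $\hat V(t)\cdot\omega-1<0$.
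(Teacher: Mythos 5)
Your proposal is correct and follows essentially the same route as the paper: the characteristic identity with the magnetic term cancelling, Pallard's decomposition $E=E^{(0)}+F+G$, identifying $F$ with $E_{T}$ and reusing the change-of-variables bound from Proposition \ref{Jacobian} for $I_{F}$, and for $I_{G}$ the same retention of the angular singularity in $\sigma_{-1}$, the same split at $\epsilon(s)=(T-s)/(1+P(s)^{8})$, the same H\"older pairing $(3/2,3)$ yielding the $\ln^{1/3}$ loss, and the same diffeomorphism $\Psi$ with Jacobian $(\hat V(t)\cdot\omega-1)(t-s)^{2}\sin\theta$. No substantive differences from the paper's argument.
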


\section{Moment Bounds}

We conclude by applying the estimates given by (\ref{usableKS2}), (\ref{usableKT}) and (\ref{newKS1est}) on $|K|$ under the assumption that $\|\Phi_{-1}\|_{L^{\infty}_{t}L^{2}_{x}} \lesssim 1$:

\begin{align*}
\|\sigma_{-1}|K|\ln^{\frac{1}{3}}(1+P(t))\|_{L^{1}_{t}L^{\frac{3}{2}}_{x} ([0,T]\times \mathbb{R}^{3})} &\leq \ln^{\frac{1}{3}}(1+P(T))
\||K|\|_{L^{1}_{t}L^{4+}_{x}}\|\sigma_{-1}\|_{L^{\infty}_{t}L^{\frac{12}{5}-}_{x}} 
\\
&\lesssim \ln^{\frac{1}{3}}(1+P(T))(1 + \|\sigma_{-1}\|_{L^{\infty}_{t}L^{\frac{12}{5}+}_{x}})\|\sigma_{-1}\|_{L^{\infty}_{t}L^{\frac{12}{5}-}_{x}}
\end{align*}
Notice that our choice of H\"older exponents used in the first line above allow for the Lebesgue norm exponents on both terms involving $\sigma_{-1}$ to be approximately equivalent to $\frac{12}{5}$. This choice of H\"older exponents simplifies our computation. Other choices yield similar results. We can now use Lemma \ref{sigmainterpolation} to bound each term in (\ref{finalPT}) for some $\beta >0$ arbitrarily small:

\bea\label{sigma1}
\|\sigma_{-1}\|_{L^{\infty}_{t}L^{\frac{12}{5}-}_{x}} \lesssim \|p_{0}^{\frac{24}{5r} - 1}f\|_{L^{\infty}_{t}L^{r}_{x}L^{1}_{p}}^{\frac{5r}{12}+}
\eea

\bea\label{sigma2}
\|\sigma_{-1}\|_{L^{\infty}_{t}L^{\frac{12}{5}+}_{x}} \lesssim \|p_{0}^{\frac{24}{5r} - 1+\beta}f\|_{L^{\infty}_{t}L^{r}_{x}L^{1}_{p}}^{\frac{5r}{12}-}
\eea

\bea\label{sigma3}
\|\sigma_{-1}\|_{L^{\infty}_{t}L^{3+}_{x}} \lesssim \|p_{0}^{\frac{6}{r}-1+\beta}f\|_{L^{\infty}_{t}L^{r}_{x}L^{1}_{p}}^{\frac{r}{3}-}
\eea

We can extract $\frac{12}{10r}-\delta$ power of $p_{0}$ for some $\delta >0$ arbitrarily small from each of (\ref{sigma1}) and (\ref{sigma2}) and $\frac{3}{r}-\delta$ power of $p_{0}$ from (\ref{sigma3}). Thus:

\bea\label{sigma1.1}
\|\sigma_{-1}\|_{L^{\infty}_{t}L^{\frac{12}{5}-}_{x}} \lesssim \|p_{0}^{\frac{18}{5r} - 1}f\|_{L^{\infty}_{t}L^{r}_{x}L^{1}_{p}}^{\frac{5r}{12}+}P(T)^{\frac{1}{2}-}
\eea

\bea\label{sigma2.1}
\|\sigma_{-1}\|_{L^{\infty}_{t}L^{\frac{12}{5}+}_{x}} \lesssim \|p_{0}^{\frac{18}{5r} - 1+\beta}f\|_{L^{\infty}_{t}L^{r}_{x}L^{1}_{p}}^{\frac{5r}{12}}P(T)^{\frac{1}{2}-}
\eea

\bea\label{sigma3.1}
\|\sigma_{-1}\|_{L^{\infty}_{t}L^{3+}_{x}} \lesssim \|p_{0}^{\frac{3}{r}-1+\beta}f\|_{L^{\infty}_{t}L^{r}_{x}L^{1}_{p}}^{\frac{r}{3}-}P(T)^{1-}
\eea
where $P(T)^{1-}$ indicates a power of $P(T)$ smaller than $1$ by an arbitrarily small amount. Assume that $\|p_{0}^{\frac{18}{5r} - 1+\beta}f\|_{L^{\infty}_{t}L^{r}_{x}L^{1}_{p}} \lesssim 1$. (Hence $\|p_{0}^{\frac{3}{r} - 1+\beta}f\|_{L^{\infty}_{t}L^{r}_{x}L^{1}_{p}} \lesssim \|p_{0}^{\frac{18}{5r} - 1+\beta}f\|_{L^{\infty}_{t}L^{r}_{x}L^{1}_{p}} \lesssim 1$.) 
\\
\\
Plugging these into (\ref{finalPT}), we obtain the bound:

\bea
P(T) \lesssim 1 + \ln^{\frac{1}{3}}(1+P(T))P(T)^{1-}
\eea
which implies that $P(T)\lesssim 1$ since $P(T) > 1$. Finally the last term we need to take care of is the assumption that $\|\Phi_{-1}\|_{L^{\infty}_{t}L^{2}_{x}} \lesssim 1 $. By employing similar proof to Lemma \ref{sigmainterpolation}, we see that:

\begin{prop}
Given $r\in [1,2]$, we have the estimate:

\bea\label{phiest}
\|\Phi_{-1}\|_{L^{2}_{x}} \lesssim \|p_{0}^{\alpha}f\|_{L^{r}_{x}L^{1}_{p}}^{\frac{r}{2}}
\eea

where $\alpha > \frac{2}{r}-1$.
\end{prop}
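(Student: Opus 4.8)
The plan is to repeat, essentially verbatim, the argument used for Lemma~\ref{sigmainterpolation}, exploiting the fact that $\Phi_{-1}$ carries only a half-power singularity $(1+\hat{p}\cdot\omega)^{-1/2}$ rather than the full $(1+\hat{p}\cdot\omega)^{-1}$ appearing in $\sigma_{-1}$; this halving is exactly what converts the threshold $\nu>2\frac{r}{s}-1$ of Lemma~\ref{sigmainterpolation} into the milder $\alpha>\frac{2}{r}-1$ asserted here.

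First I would fix $\omega$ with $|\omega|=1$ and apply H\"older's inequality in $p$ with conjugate exponents $Q,Q'$, splitting $\frac{f}{p_{0}(1+\hat{p}\cdot\omega)^{1/2}}=\frac{1}{p_{0}^{1+a}(1+\hat{p}\cdot\omega)^{1/2}}\cdot p_{0}^{a}f$, so that
$$\int_{\mathbb{R}^{3}}\frac{f\,dp}{p_{0}(1+\hat{p}\cdot\omega)^{1/2}}\le I^{1/Q'}\Big(\int_{\mathbb{R}^{3}}p_{0}^{aQ}f^{Q}\,dp\Big)^{1/Q},\qquad I\eqdef\int_{\mathbb{R}^{3}}\frac{dp}{p_{0}^{(1+a)Q'}(1+\hat{p}\cdot\omega)^{Q'/2}}.$$
Since $I$ is rotation invariant it does not depend on $\omega$, so the same bound survives taking the supremum over $\omega$ and hence holds with $\Phi_{-1}$ on the left. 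Then I would estimate $I$: writing $(1+\hat{p}\cdot\omega)^{-Q'/2}=(1+\hat{p}\cdot\omega)^{-(Q'/2-1)}(1+\hat{p}\cdot\omega)^{-1}$ and using $(1+\hat{p}\cdot\omega)^{-1}\lesssim p_{0}^{2}$ (legitimate precisely when $Q'\ge 2$) gives
$$I\lesssim\int_{\mathbb{R}^{3}}\frac{dp}{p_{0}^{aQ'+2}(1+\hat{p}\cdot\omega)},$$
and then the computation already carried out in \eqref{integralbound} yields $I\lesssim\int_{0}^{\infty}\frac{1+\log p_{0}}{p_{0}^{aQ'}}\,d|p|\lesssim 1$ as soon as $aQ'>1$.

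Finally I would take $L^{2}_{x}$ norms, use the conservation law \eqref{conserve2} to pass from $f^{Q}$ to $f$ (valid since $Q\ge 1$), and choose $Q=2/r$, hence $Q'=\frac{2}{2-r}$. The constraint $Q'\ge 2$ is then exactly $r\ge 1$, while $r\le 2$ guarantees $Q\ge 1$; writing $\alpha=aQ=\frac{2a}{r}$, the integrability condition $a>1/Q'=1-\frac{r}{2}$ becomes $\alpha>\frac{2}{r}-1$, and one arrives at $\|\Phi_{-1}\|_{L^{2}_{x}}\lesssim\|p_{0}^{\alpha}f\|_{L^{r}_{x}L^{1}_{p}}^{r/2}$. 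The endpoints $r=1$ (where $Q'=2$ and the auxiliary exponent $Q'/2-1$ vanishes, so the reduction step is an equality) and $r=2$ (where $Q'=\infty$ and one instead uses the pointwise bound $\frac{1}{p_{0}^{1+a}(1+\hat{p}\cdot\omega)^{1/2}}\lesssim p_{0}^{-a}\le 1$ for $a>0$) are handled directly. I do not expect a genuine obstacle: the only point requiring care is the bookkeeping that ties the constraint $Q'\ge 2$ to the hypothesis $r\ge 1$, and the kernel-integrability threshold $aQ'>1$ to the stated exponent condition $\alpha>\frac{2}{r}-1$; everything else is the routine H\"older-plus-interpolation scheme already executed for $\sigma_{-1}$.
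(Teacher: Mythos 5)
Your proposal is correct and follows essentially the same route as the paper's own proof: Hölder in $p$ with conjugate exponents $Q=2/r$, $Q'=2/(2-r)$, reduction of the half-power singularity via $(1+\hat{p}\cdot\omega)^{-(Q'/2-1)}\lesssim p_{0}^{Q'-2}$, and the kernel computation of \eqref{integralbound}, yielding the threshold $\alpha>\frac{2}{r}-1$. Your bookkeeping is in fact slightly cleaner than the paper's (which contains a typo asserting $q\geq 2$ where $q\leq 2$ is meant), but the argument is the same.
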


\begin{proof}
Fix some $\omega \in \mathbb{S}^{2}$ and let $r=\frac{2}{q}$. Then $\frac{q'}{2} \geq 1$  (since $\frac{1}{q}+\frac{1}{q'}=1$ and $q\geq 2$) and $\frac{1}{1 + \hat{p}\cdot\omega} \lesssim p_{0}^{2}$ implies:
\begin{align}
\int_{\mathbb{R}^{3}}{\frac{f(t,x,p)}{p_{0}(1+\hat{p}\cdot\omega)^{\frac{1}{2}}} dp} & \lesssim \Big(\int_{\mathbb{R}^{3}}{\frac{1}{p_{0}^{(\beta+1)q'}(1+\hat{p}\cdot\omega)^{\frac{q'}{2}}} dp}\Big)^{\frac{1}{q'}}\Big(\int_{\mathbb{R}^{3}}{p_{0}^{\beta q}f(t,x,p) dp}\Big)^{\frac{1}{q}} \|f\|_{L^{\infty}_{t,x,p}}^{\frac{q-1}{q}} \\
& \lesssim \Big(\int_{\mathbb{R}^{3}}{\frac{1}{p_{0}^{\beta q' + 2}(1+\hat{p}\cdot\omega)} dp}\Big)^{\frac{1}{q'}}\Big(\int_{\mathbb{R}^{3}}{p_{0}^{\beta q}f(t,x,p) dp}\Big)^{\frac{1}{q}}
\end{align}

By (\ref{integralbound}) in the proof of Lemma \ref{sigmainterpolation}, we know that the first integral on the right hand side is bounded by a constant when $\beta q' > 1$, i.e. $\beta q > q -1$. Taking the $L^{2}$ norm of this inequality:
\bea
\Bigg\|\int_{\mathbb{R}^{3}}{\frac{f(t,x,p)}{p_{0}(1+\hat{p}\cdot\omega)^{\frac{1}{2}}} dp}\Bigg\|_{L^{2}_{x}} \lesssim \Bigg\|\Big(\int_{\mathbb{R}^{3}}{p_{0}^{\beta q}f(t,x,p) dp}\Big)^{\frac{1}{q}}\Bigg\|_{L^{2}_{x}} = \Bigg\|\Big(\int_{\mathbb{R}^{3}}{p_{0}^{\beta q}f(t,x,p) dp}\Big)\Bigg\|_{L^{\frac{2}{q}}_{x}L^{1}_{p}}^{\frac{1}{q}}
\eea

Finally, setting $\alpha = \beta q$, we obtain that
$$ \alpha > q-1 = \frac{2}{r} -1.$$ Taking the maximum over all $\omega \in \mathbb{S}^{2}$ retains the same upper bound. Hence, this completes the proof.
\end{proof}

In particular, notice that for $1\leq r \leq 2$:
\bea
\|\Phi_{-1}\|_{L^{2}_{x}} \lesssim \|p_{0}^{\frac{2}{r}-1 + \beta}f\|_{L^{r}_{x}L^{1}_{p}} \lesssim \|p_{0}^{\frac{18}{5r} - 1+\beta}f\|_{L^{\infty}_{t}L^{r}_{x}L^{1}_{p}} \lesssim 1
\eea

Thus, if $\|p_{0}^{\frac{18}{5r} - 1+\beta}f\|_{L^{\infty}_{t}L^{r}_{x}L^{1}_{p}} \lesssim 1$, then $\|\Phi_{-1}\|_{L^{2}_{x}} \lesssim 1$. Hence, all of the terms in (\ref{finalPT}), which implicitly included the assumption $\|\Phi_{-1}\|_{L^{2}_{x}} \lesssim 1$, are bounded. Thus, indeed we do know that $P(T)\lesssim 1$. Thus, we can extend our local solution on the time interval $[0,T)$ to a larger time interval $[0,T+\epsilon]$. This concludes the proof of 
\bea
\|p_{0}^{\frac{18}{5r} - 1+\beta}f\|_{L^{\infty}_{t}L^{r}_{x}L^{1}_{p}} \lesssim 1
\eea
as a continuation criteria for $1\leq r \leq 2$.

\section{Proof of Theorem \ref{main2}}

In this section, we prove our final result, Theorem \ref{main2}. First, we state the following bounds analogous to \cite{L-S}. The inequality (\ref{singularitybound}) is proven analogously to Proposition 4.3 in \cite{L-S}, where we replace the fixed unit vector $e_{3}$ with the time-dependent unit vector $n_{3}(t)$. This change does not affect the proof because our inequality is pointwise in time. Before stating our main propositions, we define the following notation for vectors $v ,w \in \mathbb{R}^{3}$:

 $$\angle(v,\pm w) \eqdef \min\{\angle(v,w),\angle(v,-w)\},$$ which will be used throughout this section.

\begin{prop} For any $p\in\mathbb{R}^{3}$ and $\omega\in \mathbb{S}^{2}$:
	\bea\label{singularity}
	(1+\hat{p}\cdot\omega)^{-1} \lesssim \text{min}\{p_{0}^{2},(\angle(\frac{p}{|p|},-\omega))^{-2}\}
	\eea
	Further, if $\gamma = \tan^{-1}\big(\frac{p\cdot n_{2}(t)}{p\cdot n_{1}(t)}\big)$ and $p\in \text{supp}\{f\}$, then
	\bea\label{pnorm}
	|p| \lesssim \frac{\kappa(t,\gamma(p))}{\angle(\frac{p}{|p|},\pm n_{3}(t))}
	\eea
	Combining (\ref{singularity}) and (\ref{pnorm}), we obtain the following estimate
	for $p\in \text{supp}\{f\}$:
	\bea\label{singularitybound}
	(1+\hat{p}\cdot\omega)^{-1} \lesssim \text{min}\{\bigg(\frac{\kappa(t,\gamma(p))}{\angle(\frac{p}{|p|},\pm n_{3}(t))}\bigg)^{2},(\angle(\frac{p}{|p|},-\omega))^{-2}\}
	\eea
\end{prop}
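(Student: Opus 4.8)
The plan is to dispatch the three displayed estimates in order: $(\ref{singularity})$ is purely kinematic, $(\ref{pnorm})$ is a direct consequence of the support hypothesis in Theorem~\ref{main2}, and $(\ref{singularitybound})$ is obtained by combining them, following the scheme of Proposition 4.3 in \cite{L-S} with the fixed vector $e_{3}$ replaced by $n_{3}(t)$.

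For $(\ref{singularity})$ I would write $\theta \eqdef \angle(\frac{p}{|p|},-\omega)\in[0,\pi]$, so that $\frac{p}{|p|}\cdot\omega=-\cos\theta$ and hence $1+\hat p\cdot\omega = 1-\frac{|p|}{p_{0}}\cos\theta$. Since $\frac{|p|}{p_{0}}<1$, bound the right side from below in two ways: first $1-\frac{|p|}{p_{0}}\cos\theta \ge 1-\frac{|p|}{p_{0}}=\frac{1}{p_{0}(p_{0}+|p|)}\gtrsim p_{0}^{-2}$, which gives $(1+\hat p\cdot\omega)^{-1}\lesssim p_{0}^{2}$; and second $1-\frac{|p|}{p_{0}}\cos\theta\ge 1-\cos\theta\gtrsim\theta^{2}$ on $[0,\pi]$, which gives $(1+\hat p\cdot\omega)^{-1}\lesssim\theta^{-2}$. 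Taking the minimum yields $(\ref{singularity})$; this step is independent of $t$. For $(\ref{pnorm})$, use that $\{n_{1}(t),n_{2}(t),n_{3}(t)\}$ is orthonormal with $\{n_{1}(t),n_{2}(t)\}$ spanning $Q(t)$, so $p=\mathbb{P}_{Q(t)}p+(p\cdot n_{3}(t))n_{3}(t)$ and therefore $|\mathbb{P}_{Q(t)}p|^{2}=|p|^{2}-(p\cdot n_{3}(t))^{2}=|p|^{2}\sin^{2}\big(\angle(\tfrac{p}{|p|},\pm n_{3}(t))\big)$. The support hypothesis of Theorem~\ref{main2}, applied with $\gamma=\gamma(p)=\tan^{-1}\big(\frac{p\cdot n_{2}(t)}{p\cdot n_{1}(t)}\big)$, gives $|\mathbb{P}_{Q(t)}p|<\kappa(t,\gamma(p))$ for $p\in\text{supp}\{f\}$; hence $|p|\,\sin\big(\angle(\tfrac{p}{|p|},\pm n_{3}(t))\big)<\kappa(t,\gamma(p))$, and since $\angle(\frac{p}{|p|},\pm n_{3}(t))\in[0,\tfrac{\pi}{2}]$ where $\sin\alpha\gtrsim\alpha$, dividing through yields $(\ref{pnorm})$.

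For $(\ref{singularitybound})$, combine the $p_{0}^{2}$ branch of $(\ref{singularity})$ with $p_{0}\le 1+|p|$ and $(\ref{pnorm})$ to get $p_{0}\lesssim 1+\frac{\kappa(t,\gamma(p))}{\angle(p/|p|,\pm n_{3}(t))}$; because $\kappa(t,\gamma)>1$ and $\angle(\frac{p}{|p|},\pm n_{3}(t))\le\frac{\pi}{2}$, the ratio $\frac{\kappa(t,\gamma(p))}{\angle(p/|p|,\pm n_{3}(t))}$ is bounded below by an absolute constant (namely $2/\pi$), so in fact $p_{0}\lesssim\frac{\kappa(t,\gamma(p))}{\angle(p/|p|,\pm n_{3}(t))}$ and $p_{0}^{2}\lesssim\big(\frac{\kappa(t,\gamma(p))}{\angle(p/|p|,\pm n_{3}(t))}\big)^{2}$. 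Inserting this into $(\ref{singularity})$ while keeping the $\theta^{-2}$ branch unchanged produces $(\ref{singularitybound})$.

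There is no real obstacle here; the only points needing a little care are the bookkeeping with the $\pm$-angle notation (checking that it is $\sin$ of the unsigned angle that equals $|\mathbb{P}_{Q(t)}p|/|p|$, and that this angle lies in $[0,\pi/2]$ so that $\sin\alpha\gtrsim\alpha$ applies) and the use of $\kappa>1$ to absorb the additive constant when passing from $1+|p|$ to $|p|$. Everything is pointwise in $t$, so the time dependence of the frame $\{n_{i}(t)\}$ plays no role, exactly as in the remark preceding the statement.
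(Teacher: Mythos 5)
Your proof is correct and is exactly the argument the paper intends: the paper gives no proof of its own here, deferring to Proposition 4.3 of \cite{L-S} with $e_{3}$ replaced by $n_{3}(t)$, and your computation is the self-contained, pointwise-in-$t$ version of that kinematic estimate (including the correct use of $|\mathbb{P}_{Q(t)}p|=|p|\sin\angle(\tfrac{p}{|p|},\pm n_{3}(t))$ and of $\kappa>1$ to absorb the additive constant). One small slip: the intermediate inequality $1-\tfrac{|p|}{p_{0}}\cos\theta\ge 1-\cos\theta$ holds only when $\cos\theta\ge 0$; for $\theta\in(\tfrac{\pi}{2},\pi]$ the left-hand side is already $\ge 1\gtrsim\theta^{2}$, so the conclusion $(1+\hat{p}\cdot\omega)^{-1}\lesssim\theta^{-2}$ still follows, but that case split should be stated.
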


Define $\omega^{(i)} = (\sin(\theta^{(i)})\cos(\phi^{(i)}),\sin(\theta^{(i)})\sin(\phi^{(i)}),\cos(\theta^{(i)}))$ where $\omega^{(i)}$ is the transformation of $\omega$ under a rotation matrix that takes $e_{i}$ to $n_{i}(T_{i})$. Thus, $\theta^{(i)} = \angle(n_{3}(T_{i}), \omega^{(i)})$. By similar arguments to Proposition 4.4 in \cite{L-S}, we obtain:

\begin{prop}\label{maintool}
We have the uniform estimate
	\begin{equation}
	\int_{\mathbb{R}^{3}}{\frac{f(s,x+r\omega^{(i)},p)}{p_{0}(1+\hat{p}\cdot\omega^{(i)})}dp} \lesssim \min \{P(s)^{2}\log(P(s)), \frac{A(s)^{4}\log(P(s))}{(\angle(n_{3}(s),\pm\omega^{(i)}))^{2}}\}
	\end{equation}
	for $s\in [T_{i},T_{i+1})$.
\end{prop}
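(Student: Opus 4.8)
The plan is to prove the two bounds in the minimum separately, following the strategy of Proposition~4.4 in \cite{L-S}. Two features differ from that setting: the fixed plane normal $e_{3}$ is replaced by the time-dependent $n_{3}(s)$, which is harmless since the asserted inequality is pointwise in $s$; and the $x$-independent constant $\kappa(s)$ is replaced by the $\gamma$-dependent profile $\kappa(s,\gamma)$, whose angular integrals will have to be absorbed into $A(s)=\|\kappa(s,\cdot)\|_{L^{4}_{\gamma}}$ via H\"older. Throughout I write $\psi=\angle(n_{3}(s),\pm\omega^{(i)})$ for the angle appearing on the right-hand side, and I use that on $[T_{i},T_{i+1})$ conditions (\ref{condition1})--(\ref{condition2}) give $\psi\approx\angle(n_{3}(T_{i}),\pm\omega^{(i)})=\min\{\theta^{(i)},\pi-\theta^{(i)}\}$.

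For the bound by $P(s)^{2}\log P(s)$ I would discard the plane hypothesis entirely and use only that $f$ is supported in $|p|\le P(s)$ together with the conservation law (\ref{conserve2}), $\|f\|_{L^{\infty}_{x,p}}\lesssim 1$. Writing the momentum integral in polar coordinates about $-\omega^{(i)}$, with polar angle $\theta=\angle(\hat{p},-\omega^{(i)})$, and invoking (\ref{singularity}) in the form $(1+\hat{p}\cdot\omega^{(i)})^{-1}\lesssim\min\{p_{0}^{2},\theta^{-2}\}$, one splits the $\theta$-integral at $\theta\sim p_{0}^{-1}$: the near part contributes $\int_{0}^{P(s)}|p|^{2}p_{0}^{-1}\,d|p|\lesssim P(s)^{2}$ and the far part the logarithmically worse $\int_{0}^{P(s)}|p|^{2}p_{0}^{-1}\log p_{0}\,d|p|\lesssim P(s)^{2}\log P(s)$. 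Since $\psi\le\pi/2$ this already gives the first half of the minimum, and this is where the logarithm in the statement originates.

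For the bound by $A(s)^{4}\log P(s)\,\psi^{-2}$, which is the substantive part, I would feed in the support constraint (\ref{pnorm})/(\ref{singularitybound}): on $\mathrm{supp}\,f$ one has $|p|\lesssim\kappa(s,\gamma(p))/\angle(\hat{p},\pm n_{3}(s))$, so $|p|$ is genuinely bounded unless the momentum direction $\hat{p}$ lies in a small cone about $\pm n_{3}(s)$. I would therefore split the unit sphere of momentum directions into a neighborhood of $\pm n_{3}(s)$ and its complement. On the complement, $\angle(\hat{p},\pm n_{3}(s))$ is bounded below so $|p|\lesssim\kappa(s,\gamma(p))$; since $\hat{p}$ is kept away from $\pm n_{3}(s)$ and the cone-singularity direction $-\omega^{(i)}$ sits at angular distance $\psi$ from $\pm n_{3}(s)$, one gets $(1+\hat{p}\cdot\omega^{(i)})^{-1}\lesssim\psi^{-2}$ off a further thin cap about $-\omega^{(i)}$, where the residual radial integration produces a $\log P(s)$ and leaves a $\gamma$-integral of powers of $\kappa(s,\gamma)$ which H\"older in $\gamma$ bounds by the appropriate power of $A(s)$. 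On the neighborhood of $\pm n_{3}(s)$ the azimuthal variable $\gamma$ runs over the full circle as $\hat{p}$ sweeps the cap; capping $|p|$ by $\min\{P(s),\kappa(s,\gamma)/\angle(\hat{p},\pm n_{3}(s))\}$ and carrying out the (logarithmically divergent, hence cut off at scale $\kappa/P(s)$) polar and radial integrations reduces matters again to a $\gamma$-integral of powers of $\kappa(s,\gamma)$ times $\log P(s)$, and H\"older with the exponent $4$ that matches $A(s)=\|\kappa(s,\cdot)\|_{L^{4}_{\gamma}}$ delivers the factor $A(s)^{4}$ with the correct $\psi^{-2}$ (using $\kappa>1$ to absorb lower powers of $A(s)$). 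Summing the two regions and combining with the first estimate yields the stated minimum.

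The main obstacle is the geometry of the two competing critical directions: the direction $-\omega^{(i)}$ along which $(1+\hat{p}\cdot\omega^{(i)})^{-1}$ blows up, and the directions $\pm n_{3}(s)$ along which the plane constraint fails to bound $|p|$. When $\psi$ is small these nearly coincide, so the cap about $\pm n_{3}(s)$ also contains $-\omega^{(i)}$, and one must track carefully how the azimuthal integration over the full circle interacts simultaneously with the $(1+\hat{p}\cdot\omega^{(i)})^{-1}$ factor and with the $\kappa(s,\gamma)/\angle(\hat{p},\pm n_{3}(s))$ bound so that only $\|\kappa(s,\cdot)\|_{L^{4}_{\gamma}}$, and not a pointwise value of $\kappa$, survives; I expect pinning down exactly the power $A(s)^{4}$ and exactly one factor $\log P(s)$ to be the delicate bookkeeping. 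By contrast, the restriction $s\in[T_{i},T_{i+1})$ and the replacement of $n_{3}(T_{i})$ by $n_{3}(s)$ enter only through the crude comparison $\angle(n_{3}(s),\pm\omega^{(i)})\approx\angle(n_{3}(T_{i}),\pm\omega^{(i)})$ permitted by (\ref{condition1})--(\ref{condition2}), and play no deeper role in this proposition.
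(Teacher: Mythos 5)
Your overall strategy is the right one and is essentially the paper's (both follow Proposition 4.4 of \cite{L-S}): prove the two bounds in the minimum separately, obtain the crude bound $P(s)^{2}\log P(s)$ from the support radius and \eqref{singularity} alone, and obtain the refined bound by splitting momentum directions relative to $\pm n_{3}(s)$ and applying H\"older in $\gamma$ against $A(s)=\|\kappa(s,\cdot)\|_{L^{4}_{\gamma}}$; your first bound is complete as sketched, and your observation that the time dependence of $n_{3}$ is harmless because the estimate is pointwise in $s$ is exactly how the paper treats it. The gap is in the second bound, at precisely the point you defer as ``delicate bookkeeping'': your decomposition --- a cap about $\pm n_{3}(s)$ of unspecified (apparently fixed) radius together with a further thin cap about $-\omega^{(i)}$ --- does not decouple the two singular directions when $\psi=\angle(n_{3}(s),\pm\omega^{(i)})$ is small, since then $-\omega^{(i)}$ lies inside the cap about $\pm n_{3}(s)$ and there you face simultaneously an unbounded $|p|$ and a factor $(1+\hat{p}\cdot\omega^{(i)})^{-1}$ as large as $p_{0}^{2}$; moreover, on the complement of a cap whose radius is comparable to $\psi$ one only gets $|p|\lesssim\kappa(s,\gamma)/\psi$ from \eqref{pnorm}, not $|p|\lesssim\kappa(s,\gamma)$ as you assert.

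The missing idea, which is the crux of the paper's proof, is to take the angular threshold to be exactly $\beta_{i}/2$ with $\beta_{i}=\angle(n_{3}(s),\pm\omega^{(i)})$. On region $I_{i}=\{\angle(\frac{p}{|p|},\pm n_{3}(s))\le \frac{\beta_{i}}{2}\}$ the triangle inequality then forces $\angle(\frac{p}{|p|},\pm\omega^{(i)})\ge\frac{\beta_{i}}{2}$, so $(1+\hat{p}\cdot\omega^{(i)})^{-1}\lesssim\beta_{i}^{-2}$ uniformly and no separate cap about $-\omega^{(i)}$ is needed; the remaining integral is computed in coordinates adapted to $Q(s)$, the integral of $p_{0}^{-1}$ over $|p_{3}|\le P(s)$ producing the single $\log P(s)$ and the planar integral producing $\int\kappa^{2}\,d\gamma\lesssim A(s)^{2}\lesssim A(s)^{4}$ (here $\kappa>1$ is used, as you anticipated). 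On region $II_{i}$ one instead uses \eqref{singularitybound} in the form $(1+\hat{p}\cdot\omega^{(i)})^{-1}\lesssim p_{0}^{2}\lesssim\kappa(s,\gamma)^{2}\theta_{s}^{-2}$ together with $|p|\lesssim\kappa(s,\gamma)\theta_{s}^{-1}$, where $\theta_{s}=\angle(p,n_{3}(s))\ge\frac{\beta_{i}}{2}$; the radial integral yields $\kappa^{2}\theta_{s}^{-2}$, so the angular integral becomes $\int\kappa^{4}\,d\gamma\int_{\beta_{i}/2}\theta_{s}^{-3}\,d\theta_{s}\lesssim A(s)^{4}\beta_{i}^{-2}$ with no extra logarithm. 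Without tying the threshold to $\beta_{i}$ the two singular effects do not separate, and the claimed bound $A(s)^{4}\log P(s)\,\psi^{-2}$ does not follow from the decomposition you describe.
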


\begin{proof}
We follow the proof of Proposition 4.4 in \cite{L-S}, emphasizing the steps in which we deviate from their proof. First, pick spherical coordinates $\theta_{(i)}, \phi_{(i)}$ such that $-\omega^{(i)}$ lies on the half-axis $\theta_{(i)} = 0$. Then, by (\ref{singularity}), we obtain the estimate
\bea\label{singularomegai}
(1+p\cdot\omega^{(i)})^{-1} \lesssim \text{min}\{p_{0}^{2}, (\theta_{(i)})^{-2}\}.
\eea
By the definition of $P(s)$, the particle density $f(s,x+r\omega^{(i)},p) = 0$ for $|p| > P(s)$. Thus, the conservation law $\|f\|_{L^{\infty}_{x,p}}\lesssim 1$ and the inequality (\ref{singularomegai}) imply that
\begin{align*}
\int_{\mathbb{R}^{3}}{\frac{f(s,x+r\omega^{(i)},p)}{p_{0}(1+\hat{p}\cdot\omega^{(i)})}dp} &\lesssim \int_{|p|\leq P(s)}{\frac{1}{p_{0}(1+\hat{p}\cdot\omega^{(i)})}dp} \\
&\lesssim \int_{0}^{P(s)}\int_{0}^{\pi}\int_{0}^{2\pi}{\frac{1}{p_{0}(1+\hat{p}\cdot\omega^{(i)})}d|p| \ d\theta_{(i)} \ d\phi_{(i)}} \\
&\lesssim \int_{0}^{P(s)}\int_{0}^{P(s)^{-1}}{p_{0}^{2} d|p| \ d\theta_{(i)}} + \int_{0}^{P(s)}\int_{P(s)^{-1}}^{\pi}{(\theta_{(i)})^{-2} d|p| \ d\theta_{(i)}} \\
&\lesssim P(s)^{2}\log(P(s)),
\end{align*}
which proves the first part of our proposition. We now move on to prove the second bound we need. Let $\beta_{i} = \angle(n_{3}(s),\pm\omega^{(i)})$. We partition the range $[-\frac{\pi}{2},\frac{\pi}{2}]$ of $\beta_{i}$ as in \cite{L-S}:
$$ I_{i} = \{\angle(n_{3}(s),\omega^{(i)})\leq \frac{\beta_{i}}{2} \} \cup I_{i} = \{\angle(n_{3}(s),-\omega^{(i)})\leq \frac{\beta_{i}}{2} \}$$
$$ II_{i} = \{\angle(n_{3}(s),\pm\omega^{(i)})\geq \frac{\beta_{i}}{2} \} \cap I_{i} = \{\angle(n_{3}(s),\pm\omega^{(i)})\geq \frac{\beta_{i}}{2} \}.$$
By the definition of $\beta_{i}$ and the triangle inequality:
$$ \angle (\frac{p}{|p|}, \omega^{(i)}) \geq |\angle (n_{3}(s), \omega^{(i)}) - \angle (\frac{p}{|p|}, n_{3}(s))|$$ if $\angle (n_{3}(s), \omega^{(i)}) \leq \frac{\beta}{2}$. Similarly, if $\angle (n_{3}(s), -\omega^{(i)}) \leq \frac{\beta}{2}$, then
$$ \angle (\frac{p}{|p|}, \omega^{(i)}) \geq |\angle (n_{3}(s), \omega^{(i)}) - \angle (\frac{p}{|p|}, n_{3}(s))|.$$ We can do the same estimate for $\angle (\frac{p}{|p|}, -\omega^{(i)})$, and hence,
$$ \angle (n_{3}(s), \pm\omega^{(i)}) \leq \frac{\beta}{2}.$$ By (\ref{singularitybound}), we now know that
$$(1+p\cdot\omega^{(i)})^{-1} \lesssim \beta^{-2}.$$ Using this estimate for region $I_{i}$ and defining the domains $D_{i}$ and $\tilde{D}_{i}$ as
$$ D_{i} = \{ p\in\mathbb{R}^{3} \ | \ \exists \ x\in\mathbb{R}^{3} \text{ such that } f(s,x,p)\neq 0\} $$ and
$$ D_{i} = \{ (p_{1},p_{2})\in\mathbb{R}^{2} \ | \ \exists \ x\in\mathbb{R}^{3}, p_{3}\in\mathbb{R} \text{ such that } f(s,x,p_{1},p_{2},p_{3})\neq 0\},$$ we obtain the following estimate on region $I_{i}$:
\begin{align*}
\int_{\mathbb{R}^{3}}{\frac{f(s,x+r\omega^{(i)},p)}{(1+p\cdot\omega^{(i)})^{-1}} dp} &\lesssim \beta^{-2} \int_{D_{i}}{\frac{1}{p_{0}} dp} \\
&\lesssim \beta^{-2} \int_{\tilde{D}_{i}}\int_{-P(s)}^{P(s)}{\frac{1}{\sqrt{1+p_{3}}} dp_{3} \ dp_{1} \ dp_{2}} \\
&\lesssim \beta^{-2}\log(P(s)) \int_{\tilde{D}_{i}}{dp_{1} \ dp_{2}} \\
&\lesssim \beta^{-2}\log(P(s)) \int_{0}^{2\pi}\int_{0}^{\kappa(s,\gamma)} u du d\gamma \\
&\lesssim  \beta^{-2}\log(P(s)) \|\kappa(s,\gamma)\|^{2}_{L^{4}_{\gamma}}
\end{align*}
(In the above, we used polar coordinate to compute the integral over $\tilde{D}$ and H\"older's inequality in $\gamma$ in the last step.) Thus, we have obtained the bound in region $I_{i}$:
$$ \int_{I_{i}}{\frac{f(s,x+r\omega^{(i)},p)}{p_{0}(1+p\cdot\omega^{(i)})} dp} \lesssim \beta^{-2}\log(P(s))A(s)^{2} \lesssim \frac{\log(P(s))A(s)^{4}}{(\angle(n_{3}(s),\pm\omega^{(i)}))^{2}}.$$ For region $II_{i}$, pick a system of polar coordinates $(\theta_{s},\phi_{s})$ such that $p\cdot n_{3}(s) = |p|\cos(\theta_{s})$, i.e. $\theta_{s} = \angle(p, n_{3}(s))$. Hence, by definition of $\beta$, we have that $\frac{\beta}{2} \leq \theta_{s} \leq \frac{\pi}{2}- \frac{\beta}{2}$ and by definition of $\gamma = \gamma(p)$, we also have that $\phi_{s} = \gamma(p)$. By (\ref{pnorm}), we have that
$$ |p| \lesssim \kappa(t,\phi_{s})(\theta_{s}^{-1} + (\pi-\theta_{s})^{-1}).$$ Using (\ref{singularitybound}), we obtain
\begin{multline*}
\int_{II_{i}}{\frac{f(s,x+r\omega^{(i)},p)}{p_{0}(1+p\cdot\omega^{(i)})} dp} \\ \lesssim \int_{0}^{2\pi} d\phi_{s} \int_{\frac{\beta}{2}}^{\frac{\pi}{2}-\frac{\beta}{2}} \sin(\theta_{s}) \ d\theta_{s}\int_{0}^{C\kappa(t,\phi_{s})(\theta_{s}^{-1} + (\pi-\theta_{s})^{-1})} |p| \kappa(t,\phi_{s})^{2}(\theta_{s}^{-2} + (\pi-\theta_{s})^{-2}) d|p| \\
\lesssim \beta^{-2}A(t)^{4} \lesssim \frac{\log(P(s))A(s)^{4}}{(\angle(n_{3}(s),\pm\omega^{(i)}))^{2}}
\end{multline*}
Summing the integrals over the domains $I_{i}$ and $I_{ii}$, we obtain the second bound we wanted. This completes our proof.
\end{proof}

In the above, $\angle(n_{3}(s),\pm\omega^{(i)}) \eqdef \min\{\angle(n_{3}(s),\omega^{(i)}),\angle(n_{3}(s),-\omega^{(i)})\}$. Notice that the above inequality is pointwise in time. Thus, the proof Proposition \ref{maintool} differs from the proof of Proposition 4.4 in \cite{L-S} only in that we replace the unit vector $e_{3} = (0,0,1)$ with $n_{3}(s)$ and $\omega$ with $\omega^{(i)}$. We now give modified arguments for momentum support on planes changing uniformly continuously in time.

\begin{prop}\label{ETestimate}
	
For $t\in [0,T)$:
\bea\label{etestimate}
|E_{T}(t,x)| + |B_{T}(t,x)|  \lesssim \log(P(t)) + (\log(P(t)))^{2}\int_{0}^{t}{A(s)^{4}ds}
\eea
\end{prop}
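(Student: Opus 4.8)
The plan is to combine the Glassey--Strauss bound (\ref{ET0}) for the $T$-terms with the pointwise momentum estimate of Proposition \ref{maintool}, carried out interval-by-interval on the (generally $t$-dependent) partition $[0,t)=\bigcup_{i=0}^{n_t}[T_i,T_{i+1})$ furnished by (\ref{condition1})--(\ref{condition2}). First I would reduce (\ref{ET0}) to a model integral: expanding $\int_{C_{t,x}}$ via (\ref{coneintegral}) cancels the $(t-s)^{\pm 2}$ factors, and using $(1+\hat p\cdot\omega)^{-1/2}\lesssim p_0$ (a consequence of (\ref{singularity})) to pass from the weight $p_0^2(1+\hat p\cdot\omega)^{3/2}$ to $p_0(1+\hat p\cdot\omega)$, one arrives at
\[
|E_T(t,x)|+|B_T(t,x)|\ \lesssim\ \int_0^t\int_{\mathbb{S}^2}\Big(\int_{\mathbb{R}^3}\frac{f(s,x+(t-s)\omega,p)}{p_0(1+\hat p\cdot\omega)}\,dp\Big)\,d\mu(\omega)\,ds .
\]

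On each subinterval $[T_i,T_{i+1})$ I would write the spherical integral in coordinates adapted to the fixed axis $n_3(T_i)$, i.e. parametrize $\omega=\omega^{(i)}$ with polar angle $\theta^{(i)}=\angle(n_3(T_i),\omega^{(i)})$ and $d\mu=\sin\theta^{(i)}\,d\theta^{(i)}\,d\phi^{(i)}$, and apply Proposition \ref{maintool} to the inner $p$-integral. Writing $\bar\theta^{(i)}=\min\{\theta^{(i)},\pi-\theta^{(i)}\}$, the triangle inequality on $\mathbb{S}^2$ together with (\ref{condition2}) gives $\angle(n_3(s),\pm\omega^{(i)})\ge \bar\theta^{(i)}-\frac14 P(t)^{-1}$. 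I would then split the $\omega^{(i)}$-integral into the polar cap $\{\bar\theta^{(i)}<\frac12 P(t)^{-1}\}$, of spherical measure $\lesssim P(t)^{-2}$, on which I use the bound $P(s)^2\log P(s)$ from Proposition \ref{maintool} together with the monotonicity $P(s)\le P(t)$ to obtain a contribution $\lesssim P(t)^{-2}P(s)^2\log P(s)\lesssim \log P(t)$; and its complement, on which $\angle(n_3(s),\pm\omega^{(i)})\ge\frac12\bar\theta^{(i)}$, so that the bound $A(s)^4\log P(s)/(\angle(n_3(s),\pm\omega^{(i)}))^2$ yields a contribution $\lesssim A(s)^4\log P(s)\int_{P(t)^{-1}/2}^{\pi/2}\frac{du}{u}\lesssim A(s)^4(\log P(t))^2$, the logarithmic singularity being cut off at $u\sim P(t)^{-1}$ and $P(t)\ge 2$ used to absorb constants. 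Thus for each $s\in[T_i,T_{i+1})$ the angular integral is $\lesssim \log P(t)+A(s)^4(\log P(t))^2$.

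Finally, summing over $i$ telescopes the $s$-integral back to $[0,t)$, giving
\[
|E_T(t,x)|+|B_T(t,x)|\ \lesssim\ \int_0^t\Big(\log P(t)+A(s)^4(\log P(t))^2\Big)\,ds\ \lesssim\ \log P(t)+(\log P(t))^2\int_0^t A(s)^4\,ds,
\]
where the factor $t\le T$ is absorbed into the implicit constant; this is (\ref{etestimate}). Note that $n_t$ may depend on $t$ (indeed on $P(t)$ via (\ref{condition2})), but since every per-interval estimate is an integral in $s$ this causes no loss.

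I expect the main obstacle to be this angular analysis: one must exploit the smallness (\ref{condition2}) of the plane's variation on each subinterval to trade the angle to the \emph{moving} axis $n_3(s)$ appearing in Proposition \ref{maintool} for the angle to the \emph{fixed} axis $n_3(T_i)$, which is the natural integration variable, and arrange the near-pole/away-from-pole split so that the $P(s)^2$ growth is killed by the $P(t)^{-2}$-small cap while the $1/\angle^2$ singularity integrates to only one extra power of $\log P(t)$.
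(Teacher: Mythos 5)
Your proposal is correct and follows essentially the same route as the paper: reduce (\ref{ET0}) to the weight $p_0(1+\hat p\cdot\omega)$, work on the partition from (\ref{condition1})--(\ref{condition2}) in coordinates adapted to the fixed axis $n_3(T_i)$, apply Proposition \ref{maintool}, use the spherical triangle inequality with the $\frac14 P(t)^{-1}$ smallness to replace the moving axis by the fixed one, and split into the $P(t)^{-1}$-size polar caps (controlled by the $P(s)^2\log P(s)$ bound) versus the complement (where $1/\angle^2$ integrates to one extra $\log P(t)$). The only cosmetic difference is that you merge the two caps near $\theta=0$ and $\theta=\pi$ into one region via $\bar\theta^{(i)}$, whereas the paper treats them as separate terms $A_i$ and $C_i$.
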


\begin{proof}
	Using the bound (\ref{ET}) and partitioning the time interval $$[0,t] = \cup_{0}^{n_{t}} ([T_{i},T_{i+1}]\cap [0,t])$$ as given by the conditions (\ref{condition1}) and (\ref{condition2}):
	\begin{equation*}
	\begin{split}
	|E_{T}(t,x)| + |B_{T}(t,x)|  &\lesssim \int_{C_{t,x}}{\int_{\mathbb{R}^{3}}{\frac{f(s,x+(t-s)\omega,p)}{(t-s)^{2}p_{0}^{2}(1+\hat{p}\cdot\omega)^{\frac{3}{2}}}dp \ d\omega}}  \\
	& = \int_{0}^{t}{\int_{0}^{2\pi}{\int_{0}^{\pi}{\int_{\mathbb{R}^{3}}{\frac{f(s,x+(t-s)\omega,p)}{p_{0}^{2}(1+\hat{p}\cdot\omega)^{\frac{3}{2}}}dp} \sin(\theta) \ d\theta} \ d\phi} \ ds}  \\
	& = \sum_{0}^{n_{t}}{\int_{T_{i}}^{T_{i+1}}{\int_{0}^{2\pi}{\int_{0}^{\pi}{\int_{\mathbb{R}^{3}}{\frac{f(s,x+(t-s)\omega^{(i)},p)}{p_{0}^{2}(1+\hat{p}\cdot\omega^{(i)})^{\frac{3}{2}}}dp} \  \sin(\theta^{(i)}) d\theta^{(i)}} \ d\phi^{(i)}} \ ds}}
	\end{split}
	\end{equation*}
We can divide the integral over $d\theta^{(i)}$ into three regions:
$$\int_{0}^{\pi}{\int_{\mathbb{R}^{3}}{\frac{f(s,x+(t-s)\omega^{(i)},p)}{p_{0}^{2}(1+\hat{p}\cdot\omega^{(i)})^{\frac{3}{2}}}dp} \ d\theta^{(i)}} \\ = A_{i}+B_{i}+C_{i}$$
	
	where $A_{i}$ is the integral over $[0,P(t)^{-1}]$, $B_{i}$ is the integral over $[P(t)^{-1}, \pi-P(t)^{-1}]$, and $C_{i}$ is the integral over $[\pi-P(t)^{-1}, \pi]$. We estimate each of these integrals using Proposition \ref{maintool}.
	
	\begin{equation*}
	\begin{split}
	B_{i} & = \int_{P(t)^{-1}}^{\pi-P(t)^{-1}}{\int_{\mathbb{R}^{3}}{\frac{f(s,x+(t-s)\omega^{(i)},p)}{p_{0}^{2}(1+\hat{p}\cdot\omega^{(i)})^{\frac{3}{2}}}dp} \ \sin(\theta^{(i)})d\theta^{(i)}} \\
	& \lesssim \int_{P(t)^{-1}}^{\pi-P(t)^{-1}}{\frac{A(s)^{4}\log(P(s))}{(\angle(n_{3}(s),\pm\omega^{(i)}))^{2}} \ \sin(\theta^{(i)})d\theta^{(i)}} \\
	& \lesssim \int_{P(t)^{-1}}^{\pi-P(t)^{-1}}{\frac{A(s)^{4}\log(P(s))}{(\theta^{(i)})^{2}} \ \sin(\theta^{(i)})d\theta^{(i)}} + \int_{P(t)^{-1}}^{\pi-P(t)^{-1}}{\frac{A(s)^{4}\log(P(s))}{(\pi -\theta^{(i)})^{2}} \ \sin(\pi - \theta^{(i)})d\theta^{(i)}}
	\end{split}
	\end{equation*}
	
	where in the third line we used the fact that $\sin(\theta^{(i)}) = \sin(\pi-\theta^{(i)})$ and we also used the following triangle inequality argument for angles:
	
	$$\angle(n_{3}(s),\pm\omega^{(i)}) \geq |\angle(n_{3}(T_{i}),\pm\omega^{(i)}) - \angle(n_{3}(s),n_{3}(T_{i}))|$$
	In the time interval $[T_{i}, T_{i+1}]$, we have that $\angle(n_{3}(s),n_{3}(T_{i})) < \frac{P(t)^{-1}}{4}$. Further, we are integrating over the interval $\theta^{(i)} = \angle(n_{3}(T_{i}),\omega^{(i)})\in [P(t)^{-1},\pi - P(t)^{-1}]$ and  $\pi - \theta^{(i)} = \angle(n_{3}(T_{i}),-\omega^{(i)})\in [P(t)^{-1},\pi - P(t)^{-1}]$. Thus,
	$$|\angle(n_{3}(T_{i}),\omega^{(i)}) - \angle(n_{3}(s),n_{3}(T_{i}))| \approx \theta^{(i)}$$
	and
	$$|\angle(n_{3}(T_{i}),-\omega^{(i)}) - \angle(n_{3}(s),n_{3}(T_{i}))| \approx \pi-\theta^{(i)}$$
	
	Evaluating the integral, we obtain:
	$$B_{i}\lesssim A(s)^{4}\log(P(t))^{2}$$
	and
	$$\sum_{0}^{n_{t}}{\int_{T_{i}}^{T_{i+1}}{B_{i} \ ds}} \lesssim \log(P(t))^{2}\int_{0}^{t}{A(s)^{4} ds}$$
	
	Next, evaluating $A_{i}$ and $C_{i}$ using the estimate $$\int{\frac{f(s,x+r\omega,p)}{p_{0}(1+\hat{p}\cdot\omega)}dp} \lesssim P(s)^{2}\log(P(s))$$ we obtain that
	\begin{align*}\label{aici}
	A_{i} & \lesssim \int_{0}^{P(t)^{-1}}{P(s)^{2}\log(P(s))} \sin(\theta^{(i)})d\theta^{(i)} \\
	& \lesssim \log(P(t))
	\end{align*}
	
	and similarly for $C_{i} \lesssim \log(P(t))$. Summing over $i = 1,\ldots, n_{t}$, we obtain our result.
\end{proof}

Next, we bound the $E_{S,1}+B_{S,1}$ term. To do so, we apply the argument directly from \cite{L-S}:

\begin{prop}
	For $s\in[0,T)$:
	\bea\label{newfbound}
	||\int_{\mathbb{R}^{3}}{f(s,x,p) \ dp}||_{L^{\infty}_{x}} \lesssim A(s)^{2}P(s)
	\eea
	
\end{prop}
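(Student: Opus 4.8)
The plan is to bound the spatial density $\rho(s,x) = \int_{\mathbb{R}^{3}} f(s,x,p)\,dp$ pointwise in $x$ by estimating the Lebesgue measure of the momentum support of $f(s,\cdot,\cdot)$ and invoking the conservation law $\|f\|_{L^{\infty}_{x,p}} \lesssim 1$. This is essentially the region-$I_i$ computation appearing in the proof of Proposition \ref{maintool}, but carried out without the singular weight $(1+\hat p\cdot\omega^{(i)})^{-1}$.

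First I would pass to the time-dependent orthonormal frame $\{n_{1}(s),n_{2}(s),n_{3}(s)\}$ and write $p = p_{1}n_{1}(s) + p_{2}n_{2}(s) + p_{3}n_{3}(s)$, so that $\mathbb{P}_{Q(s)}p = p_{1}n_{1}(s)+p_{2}n_{2}(s)$ and $|\mathbb{P}_{Q(s)}p| = \sqrt{p_{1}^{2}+p_{2}^{2}}$. By the hypothesis of Theorem \ref{main2}, whenever $f(s,x,p)\neq 0$ for some $x$, setting $\tan\gamma = \tfrac{p_{2}}{p_{1}}$ we have $\sqrt{p_{1}^{2}+p_{2}^{2}} < \kappa(s,\gamma)$, and by the definition of $P(s)$ we have $|p_{3}|\leq|p|\leq P(s)$. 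Hence, uniformly in $x$, the momentum support of $f(s,\cdot,\cdot)$ is contained, in these coordinates, in the set $\tilde D_{s}\times[-P(s),P(s)]$, where $\tilde D_{s} = \{(p_{1},p_{2}) : \sqrt{p_{1}^{2}+p_{2}^{2}} < \kappa(s,\gamma(p_{1},p_{2}))\}$. Using $\|f\|_{L^{\infty}_{x,p}} \lesssim 1$ this yields $\rho(s,x) \lesssim 2P(s)\,|\tilde D_{s}|$ for every $x$.

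It then remains to compute $|\tilde D_{s}|$ in polar coordinates within the plane $Q(s)$: writing $u = \sqrt{p_{1}^{2}+p_{2}^{2}}$ and $\gamma\in[0,2\pi]$,
$$|\tilde D_{s}| = \int_{0}^{2\pi}\int_{0}^{\kappa(s,\gamma)} u\,du\,d\gamma = \frac{1}{2}\int_{0}^{2\pi}\kappa(s,\gamma)^{2}\,d\gamma = \frac{1}{2}\|\kappa(s,\cdot)\|_{L^{2}_{\gamma}}^{2},$$
and since $\gamma$ ranges over the finite-measure interval $[0,2\pi]$, H\"older's inequality gives $\|\kappa(s,\cdot)\|_{L^{2}_{\gamma}} \lesssim \|\kappa(s,\cdot)\|_{L^{4}_{\gamma}} = A(s)$. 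Combining, $\rho(s,x) \lesssim P(s)\,A(s)^{2}$, as claimed. There is essentially no obstacle here; the only point requiring a little care is that the support constraint holds \emph{uniformly} in $x$, which is exactly what the supremum over $x\in\mathbb{R}^{3}$ in the hypothesis of Theorem \ref{main2} provides, so that $\tilde D_{s}\times[-P(s),P(s)]$ contains the $p$-support of $f(s,x,\cdot)$ for every fixed $x$.
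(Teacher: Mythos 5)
Your argument is correct and coincides with the paper's own proof: both bound the momentum support by $\tilde D_{s}\times[-P(s),P(s)]$ in coordinates adapted to $Q(s)$, compute $|\tilde D_{s}|$ in polar coordinates to get $\tfrac12\|\kappa(s,\cdot)\|_{L^{2}_{\gamma}}^{2}$, and use H\"older on the finite interval $[0,2\pi]$ to pass to $A(s)^{2}$. Your write-up merely makes explicit the uniformity in $x$ and the $L^{2}\to L^{4}$ step that the paper leaves implicit.
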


\begin{proof}
	Consider coordinates on $\mathbb{R}^{3}$ such that $Q(s)$ is lies in the $(p_{1}, p_{2}, 0)$ plane. By the support of $f$ and since $f$ is a bounded function,
	
	$$||\int_{\mathbb{R}^{3}}{f \ dp}||_{L^{\infty}} \lesssim \int_{-P(s)}^{P(s)}{dp_{3}}\int_{0}^{2\pi}{d\gamma}\int_{0}^{\kappa(s,\gamma)}{r dr} \lesssim A(s)^{2}P(s)$$
	
\end{proof}

Since we still have the same bound (\ref{newfbound}) as in \cite{L-S}, the proof of Proposition 5.3 in \cite{L-S} follows exactly:

\begin{prop}
	
	For $t\in[0,T)$:
	
	\bea\label{ES1estimate}
	\int_{0}^{t}{|E_{S,1}| + |B_{S,1}| ds} \lesssim \sqrt{\log P(t)}\int_{0}^{t}{A(s)^{2}P(s)ds}
	\eea
\end{prop}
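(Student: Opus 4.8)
The plan is to transcribe the proof of Proposition 5.3 of \cite{L-S} essentially verbatim. The only ingredient of that proof which uses the geometry of $\mathrm{supp}\,f$ is the pointwise bound on the spatial density $\int_{\mathbb R^3}f\,dp$, and we have just recorded in (\ref{newfbound}) that this bound holds in exactly the same form, $\|\int_{\mathbb R^3}f(s,x,p)\,dp\|_{L^\infty_x}\lesssim A(s)^2P(s)$, for a uniformly continuous family of planes. Since (\ref{newfbound}) is pointwise in $s$, the motion of the plane plays no role, and nothing else in the argument needs to be modified.

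Concretely, I would start from the representation bound (\ref{ES1}),
\[
(|E_{S,1}|+|B_{S,1}|)(t,x)\;\lesssim\;\int_{C_{t,x}}\int_{\mathbb R^3}\frac{|B|\,f(s,x+(t-s)\omega,p)}{(t-s)\,p_0\,(1+\hat p\cdot\omega)^{1/2}}\,dp\,d\sigma(\omega),
\]
control the field factor by the good component, $|B|\lesssim|K_g|$ (an elementary pointwise inequality: $|B\cdot\omega|\le|K_g|$ while $|B+\omega\times E|^2+|E-\omega\times B|^2\ge 2|B-(B\cdot\omega)\omega|^2$, so $|B|^2\le|B\cdot\omega|^2+\tfrac12(|B+\omega\times E|^2+|E-\omega\times B|^2)\lesssim|K_g|^2$), and apply the Cauchy--Schwarz inequality over the backward cone $C_{t,x}$ with the measure $d\sigma$ of (\ref{coneintegral}). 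The $K_g$--factor is then absorbed by the conservation law $\|K_g\|_{L^2(C_{t,x})}\lesssim1$ (Proposition 2.2 of \cite{L-S}), leaving
\[
(|E_{S,1}|+|B_{S,1}|)(t,x)\;\lesssim\;\Bigl(\int_{C_{t,x}}\Bigl(\int_{\mathbb R^3}\frac{f(s,x+(t-s)\omega,p)}{(t-s)\,p_0\,(1+\hat p\cdot\omega)^{1/2}}\,dp\Bigr)^2 d\sigma\Bigr)^{1/2}.
\]

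For the inner momentum integral I would split $\mathbb R^3_p$ according to the size of $(1+\hat p\cdot\omega)^{-1}$ using the first inequality in (\ref{singularity}), and estimate the $f$--weighted pieces with $\|f\|_{L^\infty_{x,p}}\lesssim1$, the support bound $|p|\le P(s)$, and (\ref{newfbound}); this produces a bound of the shape $(t-s)^{-1}\,(\text{angular weight})\,A(s)^2P(s)$ for the inner integral. Inserting this into the cone integral, the weight $(t-s)^2\sin\theta\,d\theta\,d\phi\,ds$ from (\ref{coneintegral}) cancels the two powers of $(t-s)^{-1}$ coming from the square; the angular integral of the residual $(1+\hat p\cdot\omega)^{-1/2}$--weight, cut off at the scale $P(t)^{-1}$ dictated by the support of $f$, contributes a single factor $\log P(t)$; taking the square root from Cauchy--Schwarz turns this into $\sqrt{\log P(t)}$, while the density bound $A(s)^2P(s)$ enters only to the first power and survives under $\int_0^t(\,\cdot\,)\,ds$, uniformly in $x$. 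This is precisely (\ref{ES1estimate}).

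The one point to watch — exactly as in \cite{L-S}, and not a new difficulty here — is the bookkeeping of the half-power singularity $(1+\hat p\cdot\omega)^{-1/2}$ inside a squared cone integral, arranged so that after Cauchy--Schwarz only one logarithm of $P(t)$ remains (under the square root) and the density estimate (\ref{newfbound}) is invoked linearly. Since (\ref{newfbound}) coincides in form with the corresponding estimate in \cite{L-S}, this is verbatim their computation, and there is no genuine obstacle.
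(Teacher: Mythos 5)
Your opening move --- observing that the only support-geometry input to Proposition 5.3 of \cite{L-S} is the pointwise spatial-density bound, which (\ref{newfbound}) supplies in identical form, and then invoking that proposition verbatim --- is exactly what the paper does; its entire proof is that one sentence. The problem is the concrete mechanism you then describe. The inequality $|B|\lesssim |K_{g}|$ is false, and so is your algebraic justification of it: take $\omega=e_{3}$, $B=e_{1}$, $E=e_{2}$. Then $B\cdot\omega=E\cdot\omega=0$, $B+\omega\times E=e_{1}-e_{1}=0$ and $E-\omega\times B=e_{2}-e_{2}=0$, so $|K_{g}|=0$ while $|B|=1$; in particular your claimed inequality $|B+\omega\times E|^{2}+|E-\omega\times B|^{2}\geq 2|B-(B\cdot\omega)\omega|^{2}$ reads $0\geq 2$ here. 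The underlying reason is that the tangential parts of $B+\omega\times E$ and $E-\omega\times B$ are rotations of one another, so $K_{g}$ carries no information about $|B_{\perp}|$ alone. This is not a repairable detail: the entire point of the splitting $E_{S}=E_{S,1}+E_{S,2}$ in \cite{L-S} is that the good components $K_{g}$, which are square-integrable on the backward light cone, do \emph{not} control the full field; $E_{S,1}$ is precisely the piece in which the uncontrolled $|B|$ survives, compensated by the weaker singularity $(1+\hat p\cdot\omega)^{-1/2}$. If $|B|\lesssim |K_{g}|$ held, the decomposition would be unnecessary and $E_{S}$ could be treated in one stroke.

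A second symptom that you are on the wrong branch: Cauchy--Schwarz on the cone against $\|K_{g}\|_{L^{2}(C_{t,x})}\lesssim 1$ is the argument used for $E_{S,2}$ (compare (\ref{es2begin})--(\ref{es2inequality})), and it necessarily returns the density data under a square-rooted time integral, of the shape $P(t)\log P(t)\,\big(\int_{0}^{t}A(s)^{8}\,ds\big)^{1/2}$; it cannot produce the linear-in-density bound $\sqrt{\log P(t)}\int_{0}^{t}A(s)^{2}P(s)\,ds$ asserted in (\ref{ES1estimate}), since after that Cauchy--Schwarz everything that remains sits inside $\big(\int_{0}^{t}\cdots\,ds\big)^{1/2}$. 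So even granting the false pointwise inequality, your bookkeeping would not land on the stated estimate. To repair the write-up, either reproduce the actual $E_{S,1}$ argument of Proposition 5.3 in \cite{L-S}, which handles the bad factor $|B|$ without routing it through $K_{g}$ on the cone, or do what the paper does and simply cite that proposition, having checked --- as you correctly did --- that (\ref{newfbound}) is its only support-dependent ingredient.
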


Finally, we have:

\begin{prop}
	For $t\in[0,T)$:
	\bea\label{es2inequality}
	|E_{S,2}| + |B_{S,2}| \lesssim P(t)\log P(t) + P(t)\log P(t) \Big(\int_{0}^{t}{A(s)^{8}ds}\Big)^{\frac{1}{2}}
	\eea
\end{prop}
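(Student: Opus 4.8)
The plan is to follow the architecture of the proof of Proposition~\ref{ETestimate}, prefaced by one Cauchy--Schwarz step that exploits the light-cone conservation of $|K_g|$. Starting from (\ref{ES2}), I would first bound the field factor in the numerator by $|K_g|$, where $|K_g|^2 = |E\cdot\omega|^2 + |B\cdot\omega|^2 + |E-\omega\times B|^2 + |B+\omega\times E|^2$, giving
\[
|E_{S,2}(t,x)| + |B_{S,2}(t,x)| \lesssim \int_{C_{t,x}} \frac{|K_g|(s,x+(t-s)\omega)}{t-s}\left(\int_{\mathbb{R}^3}\frac{f(s,x+(t-s)\omega,p)}{p_0(1+\hat{p}\cdot\omega)}\,dp\right) d\sigma(\omega).
\]
Viewing the integrand as the product of $|K_g|$ with $(t-s)^{-1}$ times the momentum integral, against the cone measure $d\sigma$, and applying Cauchy--Schwarz in $d\sigma$ together with the conservation law $\|K_g\|_{L^2(C_{t,x})}\lesssim 1$ (Proposition~2.2 in \cite{L-S}), the factor $(t-s)$ cancels against the $(t-s)^2\sin\theta$ in $d\sigma$ and one is left with
\[
|E_{S,2}(t,x)| + |B_{S,2}(t,x)| \lesssim \left(\int_0^t\!\int_0^{2\pi}\!\int_0^\pi \sin\theta\left(\int_{\mathbb{R}^3}\frac{f(s,x+(t-s)\omega,p)}{p_0(1+\hat{p}\cdot\omega)}\,dp\right)^2 d\theta\,d\phi\,ds\right)^{1/2}.
\]

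Next I would partition $[0,t]=\cup_{i=0}^{n_t}([T_i,T_{i+1}]\cap[0,t])$ exactly as for Proposition~\ref{ETestimate}, using conditions (\ref{condition1}) and (\ref{condition2}), and on each $[T_i,T_{i+1}]$ rotate $\omega\mapsto\omega^{(i)}$ so that $\theta^{(i)}=\angle(n_3(T_i),\omega^{(i)})$. Squaring the pointwise bound of Proposition~\ref{maintool} yields
\[
\left(\int_{\mathbb{R}^3}\frac{f(s,x+(t-s)\omega^{(i)},p)}{p_0(1+\hat{p}\cdot\omega^{(i)})}\,dp\right)^2 \lesssim \min\left\{P(s)^4(\log P(s))^2,\ \frac{A(s)^8(\log P(s))^2}{(\angle(n_3(s),\pm\omega^{(i)}))^4}\right\},
\]
and, as in Proposition~\ref{ETestimate}, I split the $\theta^{(i)}$-integral into $[0,P(t)^{-1}]$, $[P(t)^{-1},\pi-P(t)^{-1}]$, $[\pi-P(t)^{-1},\pi]$, using the first bound on the two outer pieces and the second on the middle one. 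On the middle piece the triangle-inequality argument combined with $\angle(n_3(s),n_3(T_i))<P(t)^{-1}/4$ gives $\angle(n_3(s),\pm\omega^{(i)})\approx\min\{\theta^{(i)},\pi-\theta^{(i)}\}$, so $\int_{P(t)^{-1}}^{\pi-P(t)^{-1}}(\angle(n_3(s),\pm\omega^{(i)}))^{-4}\sin\theta^{(i)}\,d\theta^{(i)}\lesssim P(t)^2$ and the middle contribution is $\lesssim A(s)^8(\log P(t))^2 P(t)^2$; on the outer pieces $\int_0^{P(t)^{-1}}P(s)^4(\log P(s))^2\sin\theta^{(i)}\,d\theta^{(i)}\lesssim P(t)^2(\log P(t))^2$. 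Summing over $i$ and integrating in $s$ turns the middle piece into $P(t)^2(\log P(t))^2\int_0^t A(s)^8\,ds$ and the outer pieces into $P(t)^2(\log P(t))^2$; taking the square root and using $\sqrt{a+b}\le\sqrt a+\sqrt b$ produces (\ref{es2inequality}).

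The hard part will be the bookkeeping once everything has been squared: squaring Proposition~\ref{maintool} converts the $(\angle(n_3(s),\pm\omega^{(i)}))^{-2}$ singularity into $(\angle(n_3(s),\pm\omega^{(i)}))^{-4}$, which integrates against $\sin\theta^{(i)}$ to $P(t)^2$ rather than to the $\log P(t)$ that appeared for $E_T$, and in the small-angle region the $\sin\theta^{(i)}\approx\theta^{(i)}$ weight must absorb an entire $P(s)^4$; one must verify that exactly two factors of $P(t)\log P(t)$ survive the final square root so as to match the stated estimate. A secondary but essential point is that the Cauchy--Schwarz step is available only because of the particular $E_{S,2}$-decomposition of \cite{L-S}, which packages the field factor as the $L^2$-conserved quantity $|K_g|$ on the cone; with merely an $L^2$ bound on $E$ and $B$ the estimate could not otherwise be closed.
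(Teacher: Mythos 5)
Your proposal is correct and follows essentially the same route as the paper: Cauchy--Schwarz on the cone against the conserved $\|K_g\|_{L^2(C_{t,x})}$, then the squared version of Proposition~\ref{maintool} with the same three-region split in $\theta^{(i)}$ over the partition from (\ref{condition1})--(\ref{condition2}). The bookkeeping you flag (the $(\angle)^{-4}$ singularity integrating to $P(t)^2$ and the outer pieces contributing $P(t)^2(\log P(t))^2$) works out exactly as you describe and matches the paper's computation.
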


\begin{proof}
	
	Applying H\"older's inequality to (\ref{ES1}):
	
	\bea\label{es2begin}	
	|E_{S,2}| + |B_{S,2}| \lesssim ||K_{g}||_{L^{2}(C_{t,x})} \Big(\int_{0}^{t}\int_{0}^{2\pi}\int_{0}^{\pi}\Big({\int_{\mathbb{R}^{3}}{\frac{f(s,x+(t-s)\omega,p)}{p_{0}(1+\hat{p}\cdot \omega)}dp}\Big)^{2}}\sin\theta d\theta d\phi ds\Big)^{\frac{1}{2}}
	\eea
	
	The $ ||K_{g}||_{L^{2}(C_{t,x})}$ term is uniformly bounded so we just have to get an estimate on the second term on the right. We apply the same decomposition as in the proof of Proposition \ref{ETestimate}. First, we split the integral over $\theta$ into three intervals and apply (\ref{maintool}) to the momentum integral to obtain the inequality:
	
	\bea\label{es2split}
	\int_{0}^{t}\int_{0}^{2\pi}\int_{0}^{\pi}\Big({\int_{\mathbb{R}^{3}}{\frac{f(s,x+(t-s)\omega,p)}{p_{0}(1+\hat{p}\cdot \omega)}dp}\Big)^{2}}\sin\theta d\theta d\phi ds \lesssim \sum_{i=0}^{n_{t}}{A_{i} + B_{i} + C_{i}}
	\eea
	
	where
	
	$$A_{i} = \int_{T_{i}}^{T_{i+1}}\int_{0}^{2\pi}\int_{0}^{P(t)^{-1}} P(s)^{4} \log(P(s))^{2}\sin\theta d\theta d\phi ds$$
	
	$$B_{i} = \int_{T_{i}}^{T_{i+1}}\int_{0}^{2\pi}\int_{P(t)^{-1}}^{\pi-P(t)^{-1}} \frac{A(s)^{8}\log(P(s))^{2}}{(\angle(n_{3}(s),\pm \omega^{(i)})^{4})} \sin\theta d\theta d\phi ds$$
	
	$$C_{i} = \int_{T_{i}}^{T_{i+1}}\int_{0}^{2\pi}\int_{\pi-P(t)^{-1}}^{\pi} P(s)^{4} \log(P(s))^{2}\sin(\pi-\theta) d\theta d\phi ds$$
	
	Now, we apply the same methods to bound $A_{i}$, $B_{i}$ and $C_{i}$ as in Proposition \ref{ETestimate} to obtain that:
	
	\bea\label{aibici}
	\sum_{i=0}^{n_{t}}{A_{i}+B_{i}+C_{i}} \lesssim P(t)^{2}\log(P(t))^{2} + P(t)^{2}\log(P(t))^{2}\int_{0}^{t}{A(s)^{8} ds}
	\eea
	
	Plugging (\ref{aibici}) into (\ref{es2begin}), we obtain our result.
\end{proof}

Notice that we have proven the same bounds on the fields $E$ and $B$ as found in \cite{L-S}. Thus, we can borrow the same proof from Proposition 6.1 in \cite{L-S} to obtain that $P(T) \lesssim 1$. Hence, by Theorem \ref{GSresult}, we can extend our solution $(f,E,B)$ to a larger time interval $[0,T+\epsilon]$.

\pagebreak

\end{document}